\def\O{\Omega}
\def\l{\lambda}
\newtheorem{remark}{Remark}[section]
\newtheorem{lemma}{Lemma}[section]
\newtheorem{theorem}{Theorem}[section]
\newtheorem{prop}{Proposition}[section]
\newtheorem{assumption}{Assumption}[section]
\newcommand{\jump}[1]{\Big\llbracket #1 \Big\rrbracket}
\newcommand{\jumpp}[1]{\llbracket #1 \rrbracket}
\newcommand\bu{\boldsymbol{u}}
\newcommand\bv{\boldsymbol{v}}
\newcommand\bw{\boldsymbol{w}}
\newcommand\bn{\boldsymbol{n}}
\newcommand\curl{\textbf{\text{curl}\,}}
\newcommand\bxi{\boldsymbol{\xi}}
\def\CT{{\mathcal T}}
\newcommand\bsig{\boldsymbol{\sigma}}
\newcommand\btau{\boldsymbol{\tau}}
\newcommand\bPi{\boldsymbol{\Pi}}
\newcommand\R{\mathbb{R}}
\renewcommand\H{\mathrm{H}}
\renewcommand\L{\mathrm{L}}
\renewcommand\O{\Omega}
\newcommand\bdiv{\mathop{\mathbf{div}}\nolimits}
\renewcommand\div{\mathop{\mathrm{div}}\nolimits}
\newcommand\rot{\mathop{\mathrm{rot}}\nolimits}
\newcommand\tr{\mathop{\mathrm{tr}}\nolimits}
\newcommand\LO{\L^2(\O)}
\newcommand\ws{\widehat{s}}
\newcommand\err{\texttt{err}}
\newcommand\eff{\texttt{eff}}
\newcommand{\vertiii}[1]{{\left\vert\kern-0.25ex\left\vert\kern-0.25ex\left\vert #1 
    \right\vert\kern-0.25ex\right\vert\kern-0.25ex\right\vert}}
\begin{document}

\title[AFEM for the mixed  elasticity eigenproblem]
{A posteriori analysis for a mixed FEM discretization of the linear elasticity spectral problem}


\author{Felipe Lepe}
\address{GIMNAP-Departamento de Matem\'atica, Universidad del B\'io - B\'io, Casilla 5-C, Concepci\'on, Chile.}
\email{flepe@ubiobio.cl}
\thanks{The first author was partially supported by DIUBB through project 2120173 GI/C Universidad del B\'io-B\'io and 
ANID-Chile through FONDECYT project 11200529 (Chile).}

\author{Gonzalo Rivera}
\address{Departamento de Ciencias Exactas,
Universidad de Los Lagos, Casilla 933, Osorno, Chile.}
\email{gonzalo.rivera@ulagos.cl}

\author{Jesus Vellojin}
\address{GIMNAP-Departamento de Matem\'atica, Universidad del B\'io - B\'io, Casilla 5-C,  Concepci\'on, Chile .}
\email{jesus.vellojinm@usm.cl}


\subjclass[2000]{Primary  34L15, 34L16, 35J15, 65N15, 65N50, 74B05, 76M10}

\keywords{Mixed problems, eigenvalue problems,  a posteriori error estimates, elasticity equations}

\begin{abstract}
In this paper we analyze a posteriori error estimates for a mixed formulation of the linear elasticity eigenvalue problem. A posteriori estimators for the  nearly and perfectly compressible elasticity spectral problems are proposed. With a post-process argument, we are able to prove reliability and efficiency
for the proposed estimators. The numerical method is based in Raviart-Thomas elements to approximate the pseudostress and piecewise polynomials for the displacement. We illustrate our results with numerical tests. 
\end{abstract}

\maketitle

\section{Introduction}\label{sec:intro}

In several applications of engineering sciences or physics, there exist problems where an accurate knowledge of the eigenvalues and eigenfunctions 
is  needed in order to analyze the stability and response of certain mechanical systems.
The different configurations in which such systems can be formulated, depend on physical features as 
material properties, contact with other structures or devices, just to mention a few, and geometrical features, since in real applications, elastic structures
can be used in locations that might be  convex, non convex, curved, fractured, etc.  Is this fact that leads to 
develop adaptive strategies for numerical methods  in order to recover the optimal order of convergence  for eigenvalue
problems in partial differential equations.

The literature related to adaptive strategies for the elasticity equations is abundant for the load problem, where different methods, formulations and techniques have been
well developed. On this subject, we can mention as main references \cite{MR3452773,MR3790080,MR2970742,MR3453481,MR3093586,MR2220917}, whereas for the elasticity spectral problems, the literature available is scarce. In fact, there are three works where a posteriori error  analysis for the elasticity eigenproblem is performed:\cite{MR1946986,MR4279087, MR4050542}.

For mixed eigenvalue problems, adaptive methods are a subject of current study and different techniques have emerged. One of the pioneer results  are contained in the classic article  \cite{MR1722056}, where the authors have proved that the  mixed Laplace eigenvalue problem, that the high order terms that naturally appear in the a posteriori estimators for the eigenvalues, are controlled by considering an auxiliary problem discretized  with 
a non-conforming method which results equivalent with the original one. On the other hand, and with the same aim of the reference previously mentioned, the postprocess technique, well
established in \cite{ MR2754580,MR3047040}, presents a new tool for the control of high order terms in eigenvalue problem. A recent application of this technique can be found in, for instance, on  \cite{MR3712172,MR3918688}.  

The present work is inspired in the mixed formulation proposed in \cite{MR3453481} for the source elasticity problem, where the authors introduce the nonsymmetric pseudoestress tensor as a new unknown, together with the displacement. This pseudostress tensor gives an  alternative way of dealing with dual-mixed variational formulations in continuum mechanics, without the need of imposing neither strong nor weak symmetry of the classic stresses.
It is precisely this tensor that leads to a tensorial formulation for the elasticity equations and, as a natural extension, the spectral elasticity problem can be also considered   as  in \cite{inzunza2021displacementpseudostress}, where only the a priori analysis is performed.
Since the elasticity system depends on the 
Lam\'e constants, often denoted by $\mu$ and $\lambda$, it is well known that when the Poisson ratio is close to $1/2$, numerical locking arises since $\lambda\rightarrow\infty$. This motivates the study of the so-called limit eigenvalue  problem (see \cite{inzunza2021displacementpseudostress, MR3962898} for instance). Hence, it is possible to consider two types of estimators: one for the limit eigenproblem and the other for the standard eigenproblem. Our task is to design a reliable and efficient a posteriori estimator for both problems in two and three dimensional domains and analyze computationally their performance, with the aim of recovering the optimal order of convergence for the eigenvalues and eigenfunctions. 

We remark that, for simplicity, our analysis is devoted to the spectral elasticity problem with only Dirichlet boundary conditions (cf. Section \ref{sec:model}), since the mixed boundary conditions on the domain implies the imposition of the normal component of the pseudotress tensor 
on the system and hence, on the Hilbert space in which the solution lies, leading to an analysis with other difficulties, like the regularity of the eigenfunctions for instance, that we will perform in other paper according to our research program.

The paper is organized as follows: in Section \ref{sec:model} we present the elasticity eigenvalue problem and the mixed formulation of interest.
We summarize some results related to its stability and spectral characterization. In Section \ref{sec:FEMMM} we introduce the discrete mixed eigenvalue problem, particularly the FEM spaces for the approximation and the post-process technique. The core of our paper is section \ref{sec:apost},
where the local and global indicators are presented for the standard and limit eigenproblems. Reliability and efficiency analyses for the proposed estimators  are performed. Finally in section \ref{sec:numerics} we report
some numerical tests in order to analyze the performance of the error estimators in two and three dimensions.

We end this section with some notations that will use below. Given $n\in\{ 2,3\}$, we denote $\mathbb{R}^{n\times n}$ the space of vectors and tensors of order $n$ with entries in $\mathbb{R}$, and $\mathbb{I}$ is the identity matrix of $\mathbb{R}^{n\times n}$. Given any $\boldsymbol{\tau}:=(\tau_{ij})$ and $\boldsymbol{\sigma}:=(\sigma_{ij})\in \mathbb{R}^{n\times n}$, we write
\[
	\boldsymbol{\tau}^{\texttt{t}}:=(\tau_{ji}), \quad \tr(\boldsymbol{\tau}):=\sum_{i=1}^{n}\tau_{ii}, \quad \mbox{and} \quad \boldsymbol{\tau:\sigma}:=\sum_{i,j=1}^{n} \tau_{ij}\,\sigma_{ij}, 
\]
to refer to the transpose, the trace and the tensorial product between $\boldsymbol{\tau}$ and $\boldsymbol{\sigma}$ respectively. 

For $s\geq 0$, we denote as $\| \cdot \|_{s, \O}$ the norm of the Sobolev space $\H^{s}(\O)$ or
$\mathbb{H}^s(\O):=[\H^{s}(\O)]^{n\times n}$ with $n\in\{2,3\}$  for scalar  and tensorial fields, respectively, with the convention $\H^0(\O):=\LO$ and $\mathbb{H}^0(\O):=\mathbb{L}^2(\O)$. Furthermore, with $\div$ denoting the usual divergence operator, we define the Hilbert space
\[
	\H(\div, \O):=\{ \boldsymbol{f} \in \L^{2}(\O) \,:\, \div(\boldsymbol{f}) \in \L^{2}(\O) \},
\] 
equipped with the norm $ \| \boldsymbol{f} \|_{\div, \O}^{2}:= \|\boldsymbol{\tau} \|_{0,\O}^{2} + \|\div(\boldsymbol{f}) \|_{0,\O}^{2} $, and
the space 
\begin{equation*}
\mathbb{H}(\curl, \Omega):=\{\bw\in\mathbb{L}^2(\O):\,\curl\bw\in\mathbb{L}^2(\O)\},
\end{equation*}
that we endow with the norm  $ \| \boldsymbol{w} \|_{\curl, \O}^{2}:= \|\boldsymbol{w} \|_{0,\O}^{2} + \|\curl(\boldsymbol{w}) \|_{0,\O}^{2} $.

The space of matrix valued functions whose rows belong to $\H(\div, \O)$ will be denoted $\mathbb{H}(\bdiv, \O)$ where $\bdiv$ stands for the action of $\div$ along each row of a tensor.

Finally, we use $C$ with or without subscripts, bar, tildes or hat, to denote  generic constants independent of the discretization parameter, which may take different values at different places.  

%
\section{The linear elasticity eigenvalue problem}
\label{sec:model}
The elasticity eigenvalue problem of our interest is the following 
\begin{equation*}\label{def:elast_system_reduced}
\left\{
\begin{array}{rcll}
\mu\Delta\bu+(\lambda+\mu)\nabla\div\bu & = & -\kappa\bu &  \text{ in } \quad \Omega, \\
\bu & = & \mathbf{0} & \text{ on } \quad \partial\Omega,
\end{array}
\right.
\end{equation*}
where the Cauchy stress tensor $\bsig$ is such that
\begin{equation*}
\bdiv\bsig=2\mu\bdiv\boldsymbol{\varepsilon}(\bu)+\lambda\nabla\div\bu=\mu\Delta\bu+(\lambda+\mu)\nabla\div\bu,
\end{equation*}
and $\boldsymbol{\varepsilon}(\bu)$ is the tensor of small deformations defined by $\boldsymbol{\varepsilon}(\bu)=\frac{1}{2}(\nabla\bu+(\nabla\bu)^{\texttt{t}})$. Now, with  the so-called pseudostress tensor, defined by
$
\boldsymbol{\rho}:=\mu\nabla\bu+(\lambda+\mu)\tr(\nabla\bu)\mathbb{I},
$
we obtain  the following  system
\begin{equation*}\label{def:elast_system_rho_1}
\left\{
\begin{array}{rcll}
\boldsymbol{\rho} & = &\mu\nabla\bu+(\lambda+\mu)\tr(\nabla\bu)\mathbb{I}&  \text{ in } \quad \Omega, \\
\div\boldsymbol{\rho} & = & -\kappa\bu & \text{ in } \quad \Omega, \\
\bu & = & \mathbf{0} & \text{ on } \quad \partial\Omega,
\end{array}
\right.
\end{equation*}
which we rewritte as follows
\begin{equation}\label{def:elast_system_rho}
\left\{
\begin{array}{rcll}
\displaystyle\frac{1}{\mu}\left\{\boldsymbol{\rho}-\frac{\lambda+\mu}{n\lambda+(n+1)\mu}\tr(\boldsymbol{\rho})\mathbb{I} \right\}& = &\nabla\bu&  \text{ in } \quad \Omega, \\
\div\boldsymbol{\rho} & = & -\kappa\bu & \text{ in } \quad \Omega, \\
\bu & = & \mathbf{0} & \text{ on } \quad \partial\Omega.
\end{array}
\right.
\end{equation}

Multiplying the above system with suitable tests functions, integrating by parts and using the boundary condition, we obtain the following variational formulation: Find $\kappa\in\mathbb{R}$ and $\boldsymbol{0}\neq (\boldsymbol{\rho},\bu)\in\mathbb{H}\times \mathbf{Q}$, such that

\begin{equation}\label{def:spectral_1}
\left\{
\begin{array}{rcll}
a(\boldsymbol{\rho},\btau)+b(\btau,\bu) & = &0&  \forall\btau\in\mathbb{H}, \\
b(\boldsymbol{\rho},\bv)& = & -\kappa(\bu,\bv)_{0,\O} &  \forall\bv\in \mathbf{Q},
\end{array}
\right.
\end{equation}
where $\mathbb{H}:=\mathbb{H}(\bdiv,\O)$ and $\mathbf{Q}:=\L^2(\O)^n$ and the bilinear forms $a:\mathbb{H}\times\mathbb{H}\rightarrow \mathbb{R}$ and $b:\mathbb{H}\times \mathbf{Q}\rightarrow\mathbb{R}$ are defined by
\begin{equation}
\label{eq:a_original}
\displaystyle a(\bxi,\btau):=\frac{1}{\mu}\int_{\Omega}\bxi:\btau-\frac{\lambda+\mu}{\mu(n\lambda+(n+1)\mu)}\int_{\O}\tr(\bxi)\tr(\btau)\quad\forall\bxi,\btau\in\mathbb{H},
\end{equation}
and 
\begin{equation*}
b(\btau,\bv):=\int_{\O}\bv\cdot\bdiv\btau\quad\forall\btau\in\mathbb{H},\,\,\forall\bv\in \mathbf{Q}.
\end{equation*}

For $\btau\in\mathbb{H}$ we define its associated deviator tensor by $\btau^{\texttt{d}}:=\btau-\frac{1}{n}\tr(\btau)\mathbb{I}$, which allows us to redefine $a(\cdot,\cdot)$ as follows
\begin{equation}
\label{eq:identity_a}
\displaystyle a(\bxi,\btau):=\frac{1}{\mu}\int_{\Omega}\bxi^{\texttt{d}}:\btau^{\texttt{d}}+\frac{1}{n(n\lambda+(n+1)\mu)}\int_{\O}\tr(\bxi)\tr(\btau)\quad\forall\bxi,\btau\in\mathbb{H}.
\end{equation}

With the purpose of establish the well posedness of the mixed formulation \eqref{def:spectral_1}, we introduce the following decomposition $\mathbb{H}:=\mathbb{H}_0\oplus \R \mathbb{I}$ where
\begin{equation*}
\mathbb{H}_0:=\left\{\btau\in\mathbb{H}\,:\,\int_{\O}\tr(\btau)=0\right\}.
\end{equation*}
Note that for any $\bxi\in\mathbb{H}$ there exists a unique $\bxi_{0}\in \mathbb{H}_0$ and $d:=\dfrac{1}{n|\O|}\displaystyle\int_{\O}\tr(\bxi)\in\R$, such that the decomposition for $\bxi$ holds.

The following lemma guarantees that the test space can also be restricted to $\mathbb{H}_0$ 
\begin{lemma}
Any solution of \eqref{def:spectral_1} with $\boldsymbol{\rho}\in \mathbb{H}_{0}$ is also solution of the problem: Find $\kappa\in\mathbb{R}$ and $\boldsymbol{0}\neq (\boldsymbol{\rho}_0,\bu_0)\in\mathbb{H}_{0}\times \mathbf{Q}$, such that

\begin{equation}\label{def:spectral_H0}
\left\{
\begin{array}{rcll}
a(\boldsymbol{\rho}_{0},\btau)+b(\btau,\bu_0) & = &0&  \forall\btau\in\mathbb{H}_{0}, \\
b(\boldsymbol{\rho}_{0},\bv)& = & -\kappa(\bu_0,\bv)_{0,\O} &  \forall\bv\in \mathbf{Q}.
\end{array}
\right.
\end{equation}
Conversely, any solution of \eqref{def:spectral_H0} is also a solution of \eqref{def:spectral_1}.
\end{lemma}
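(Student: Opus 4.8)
The plan is to prove the two implications separately. The direct implication is immediate: if $(\kappa,\boldsymbol{\rho},\bu)$ solves \eqref{def:spectral_1} and happens to satisfy $\boldsymbol{\rho}\in\mathbb{H}_0$, then since $\mathbb{H}_0\subset\mathbb{H}$ every $\btau\in\mathbb{H}_0$ is an admissible test function in the first equation of \eqref{def:spectral_1}, while the second equation is left untouched because its test space $\mathbf{Q}$ is unchanged. Hence $(\kappa,\boldsymbol{\rho},\bu)$ solves \eqref{def:spectral_H0} with $(\boldsymbol{\rho}_0,\bu_0):=(\boldsymbol{\rho},\bu)$, which is nontrivial by hypothesis.

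For the converse, I would start from a solution $(\kappa,\boldsymbol{\rho}_0,\bu_0)\in\mathbb{H}_0\times\mathbf{Q}$ of \eqref{def:spectral_H0}. Since $\mathbb{H}_0\subset\mathbb{H}$, the pair $(\boldsymbol{\rho}_0,\bu_0)$ is an admissible trial pair for \eqref{def:spectral_1}, and the second equation of \eqref{def:spectral_1} coincides verbatim with that of \eqref{def:spectral_H0}, so nothing has to be checked there. The only thing to verify is the first equation of \eqref{def:spectral_1} for an arbitrary $\btau\in\mathbb{H}$. Using the decomposition $\mathbb{H}=\mathbb{H}_0\oplus\R\mathbb{I}$ I would write $\btau=\btau_0+d\mathbb{I}$ with $\btau_0\in\mathbb{H}_0$ and $d\in\R$; by bilinearity of $a(\cdot,\cdot)$ and $b(\cdot,\cdot)$ the $\btau_0$-contribution vanishes thanks to \eqref{def:spectral_H0}, so it remains to show $a(\boldsymbol{\rho}_0,d\mathbb{I})+b(d\mathbb{I},\bu_0)=0$.

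This last identity is settled by two elementary observations: first, $\bdiv(d\mathbb{I})=\mathbf{0}$ because $d\mathbb{I}$ is a constant tensor, so $b(d\mathbb{I},\bu_0)=0$; second, using the representation \eqref{eq:identity_a} of the bilinear form, $(d\mathbb{I})^{\texttt{d}}=\mathbf{0}$ kills the deviatoric term while the trace term equals a multiple of $\int_{\O}\tr(\boldsymbol{\rho}_0)$, which is zero since $\boldsymbol{\rho}_0\in\mathbb{H}_0$ (one could equivalently argue directly from \eqref{eq:a_original}). Therefore $a(\boldsymbol{\rho}_0,d\mathbb{I})=0$ and the first equation of \eqref{def:spectral_1} holds for all $\btau\in\mathbb{H}$. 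Honestly there is no real obstacle in this argument; the only point worth isolating is that the cancellation of the spurious constant-tensor contribution relies precisely on the zero-mean-trace constraint defining $\mathbb{H}_0$, which is exactly why the splitting $\mathbb{H}=\mathbb{H}_0\oplus\R\mathbb{I}$ is the right tool here.
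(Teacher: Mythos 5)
Your proof is correct and is exactly the standard argument this lemma rests on; the paper in fact states the result without proof, so there is no alternative route to compare against. The two key cancellations you isolate --- $b(d\mathbb{I},\bu_0)=0$ because $\bdiv(d\mathbb{I})=\mathbf{0}$, and $a(\boldsymbol{\rho}_0,d\mathbb{I})=0$ because $(d\mathbb{I})^{\texttt{d}}=\mathbf{0}$ and $\int_{\O}\tr(\boldsymbol{\rho}_0)=0$ --- are precisely the reason the decomposition $\mathbb{H}=\mathbb{H}_0\oplus\R\mathbb{I}$ is introduced, and both check out against \eqref{eq:a_original} and \eqref{eq:identity_a}.
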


Let us consider the source problem associated to \eqref{def:spectral_H0}: given $\boldsymbol{f}\in\mathbf{Q}$, find $(\widehat{\boldsymbol{\rho}}_0,\widehat{\bu}_0)\in\mathbb{H}_0\times\mathbf{Q}$, such that 
\begin{equation}\label{def:ssource_H0}
\left\{
\begin{array}{rcll}
a(\widehat{\boldsymbol{\rho}}_{0},\btau)+b(\btau,\widehat{\bu}_0) & = &0&  \forall\btau\in\mathbb{H}_{0}, \\
b(\widehat{\boldsymbol{\rho}}_{0},\bv)& = & -(\boldsymbol{f},\bv)_{0,\O} &  \forall\bv\in \mathbf{Q}.
\end{array}
\right.
\end{equation}

From the proof of \cite[Lemma 4.1]{MR3453481},  an important consequence of the well posedness of \eqref{def:ssource_H0}, is  that there exists a constant $C>0$ such that the pair $(\widehat{\boldsymbol{\rho}}_0,\widehat{\bu}_0)$ satisfies $\|\widehat{\boldsymbol{\rho}}_{0}\|_{\bdiv,\O}+\|\widehat{\bu}\|_{0,\O}\leq C\|\boldsymbol{f}\|_{0,\O}$ (see \cite[Theorem 2.1]{MR3453481} ). Hence, if $\mathcal{A}:\mathbb{H}_0\times\mathbf{Q}\rightarrow(\mathbb{H}_0\times\mathbf{Q})' $ is the linear operator associated to the left hand side of \eqref{def:ssource_H0}, it is possible to prove that $\mathcal{A}$ is an isomorphism that satisfies $\|\mathcal{A}(\btau,\bv)\|_{(\mathbb{H}_0\times\mathbf{Q})'}\geq C\|(\btau,\boldsymbol{v})\|_{\mathbb{H}_0\times\mathbf{Q}}$, for all $(\btau,\boldsymbol{v})\in\mathbb{H}_0\times\mathbf{Q}$, which is equivalent to the following inf-sup condition
\begin{equation}
\label{eq:complete_infsup}
\|(\boldsymbol{\tau}\hspace*{-0.045cm}, \bv)\|_{\mathbb{H}_{0}\times \mathbf{Q}}\leq C\left(\hspace*{-0.1cm}\displaystyle\sup_{\underset{(\boldsymbol{\xi},\boldsymbol{w})\neq \boldsymbol{0}}{(\boldsymbol{\xi},\boldsymbol{w})\in \mathbb{H}_{0}\times\mathbf{Q}}}\frac{a(\boldsymbol{\tau},\boldsymbol{\xi})\hspace*{-0.045cm}+\hspace*{-0.045cm}b(\boldsymbol{\xi},\bv)\hspace*{-0.045cm}+\hspace*{-0.045cm}b(\btau,\boldsymbol{w})}{\|(\boldsymbol{\xi},\boldsymbol{w})\|_{\mathbb{H}_{0}\times \mathbf{Q}}}\right),
\end{equation}
where $C$ is a positive constant.

%

We end this section with the following regularity result (see \cite{inzunza2021displacementpseudostress} for instance).
\begin{lemma}[Regularity of the eigenfunctions]
\label{lmm:add_eigen}
The solutions $(\kappa,\boldsymbol{\rho},\bu)$ of the problem above correspond, in one hand, to a sequence of positive finite-multiplicity eigenvalues $\{\kappa_i\}_{i\in\mathbb{N}}$ such that $\kappa_i\rightarrow\infty$, whereas the pair  $(\boldsymbol{\rho},\bu)\in ^^s(\Omega)\times\H^{1+s}(\Omega)^n$ for all $s\in (0,\ws)$, where $0<\ws\leq1$ (see \cite{MR961439,MR840970} for instance). Also, there exists a constant $\widehat{C}>0$ which in principle depends on $\lambda$, such that
\begin{equation*}
\|\bu\|_{1+s,\O}\leq \widehat{C}\|\bu\|_{0,\O}.
\end{equation*}

\end{lemma}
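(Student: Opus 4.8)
The plan is to first recover the spectral structure from the solution operator of the source problem \eqref{def:ssource_H0}, and then to obtain the regularity by combining the equivalence of \eqref{def:ssource_H0} with the primal elasticity problem with the known shift theorems for the elasticity operator on polygonal/polyhedral domains.

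For the spectral part, I would introduce the solution operator $T:\mathbf{Q}\to\mathbf{Q}$, $T\boldsymbol{f}:=\widehat{\bu}_0$, where $(\widehat{\boldsymbol{\rho}}_0,\widehat{\bu}_0)$ is the unique solution of \eqref{def:ssource_H0}; well-posedness (and hence boundedness of $T$) follows from the inf-sup condition \eqref{eq:complete_infsup}. The eigenpairs $(\kappa,\bu)$ of \eqref{def:spectral_H0} are exactly the pairs with $T\bu=\kappa^{-1}\bu$ (the value $\kappa=0$ is excluded directly from \eqref{def:spectral_H0}). Next I would check that $T$ is self-adjoint and positive on $\mathbf{Q}$: symmetry of $a(\cdot,\cdot)$ and $b(\cdot,\cdot)$ gives $(\boldsymbol{f},T\boldsymbol{g})_{0,\O}=(T\boldsymbol{f},\boldsymbol{g})_{0,\O}$, while $(\boldsymbol{f},T\boldsymbol{f})_{0,\O}=a(\widehat{\boldsymbol{\rho}}_0,\widehat{\boldsymbol{\rho}}_0)>0$ for $\boldsymbol{f}\neq\0$ by the ellipticity of $a$ on the kernel of $b$. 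Compactness I would get from the equivalence of \eqref{def:ssource_H0} with the primal problem $-\bdiv(2\mu\,\beps(\widehat{\bu}_0)+\lambda(\div\widehat{\bu}_0)\mathbb{I})=\boldsymbol{f}$ in $\H^1_0(\O)^n$: Korn's inequality together with Lax--Milgram yield $\|\widehat{\bu}_0\|_{1,\O}\le C\|\boldsymbol{f}\|_{0,\O}$, so $T$ factors through the compact embedding $\H^1_0(\O)^n\hookrightarrow\mathbf{Q}$. The spectral theorem for compact, self-adjoint, positive operators then produces a sequence of positive eigenvalues of $T$ of finite multiplicity accumulating only at $0$, i.e.\ a sequence $\{\kappa_i\}_{i\in\mathbb{N}}$ of positive finite-multiplicity eigenvalues with $\kappa_i\to\infty$.

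For the regularity, let $(\kappa,\boldsymbol{\rho},\bu)$ be an eigentriple. Then $\bu$ solves the primal elasticity problem with right-hand side $\boldsymbol{f}=\kappa\bu\in\mathbf{Q}$, and I would invoke the shift theorem for the elasticity operator on the (possibly non-convex) polygonal/polyhedral domain $\O$ from \cite{MR961439,MR840970}: there is $\ws\in(0,1]$ so that $\bu\in\H^{1+s}(\O)^n$ for every $s\in(0,\ws)$ with $\|\bu\|_{1+s,\O}\le\widetilde{C}\|\boldsymbol{f}\|_{0,\O}$, where $\widetilde{C}$ may depend on $\lambda$. This gives $\|\bu\|_{1+s,\O}\le\widetilde{C}\kappa\|\bu\|_{0,\O}=:\widehat{C}\|\bu\|_{0,\O}$, and then the constitutive relation $\boldsymbol{\rho}=\mu\nabla\bu+(\lambda+\mu)\tr(\nabla\bu)\mathbb{I}$ transfers the regularity of $\nabla\bu$ to $\boldsymbol{\rho}$, so that $\boldsymbol{\rho}\in\mathbb{H}^s(\O)$.

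The main obstacle is the regularity step: the actual value of $\ws$, and in particular the impossibility of taking $s=1$ on non-convex domains, is governed by the corner and edge singularities of the elasticity system, so this part is not elementary and relies entirely on the detailed analysis of \cite{MR961439,MR840970}. A secondary point is to keep track of the $\lambda$-dependence of the regularity constant; this is not a genuine difficulty here because the estimate is only required for a fixed value of $\lambda$, so $\widehat{C}$ is allowed to blow up as $\lambda\to\infty$.
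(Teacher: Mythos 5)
The paper offers no proof of this lemma---it is quoted from \cite{inzunza2021displacementpseudostress}, with the regularity exponent $\ws$ taken from \cite{MR961439,MR840970}---and your outline is precisely the standard solution-operator argument (compact, self-adjoint, positive $T$ plus the elasticity shift theorem) that those references supply, so it is consistent with the paper's approach. One small correction: the positivity of $(\boldsymbol{f},T\boldsymbol{f})_{0,\O}=a(\widehat{\boldsymbol{\rho}}_0,\widehat{\boldsymbol{\rho}}_0)$ cannot be deduced from ellipticity of $a$ on the kernel of $b$, since $\bdiv\widehat{\boldsymbol{\rho}}_0=-\boldsymbol{f}\neq\0$ places $\widehat{\boldsymbol{\rho}}_0$ outside that kernel; instead it follows from the identity \eqref{eq:identity_a}, which exhibits $a$ as positive definite on all of $\mathbb{L}^2(\O)$ tensors (vanishing only when both the deviator and the trace vanish, i.e.\ when $\widehat{\boldsymbol{\rho}}_0=\0$, which would force $\boldsymbol{f}=\0$).
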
 

We mention that the dependency of the constants in the regularity exponents and boundedness on $\lambda$ is not completely evident. This has been observed in \cite{inzunza2021displacementpseudostress,MR3962898} when the numerical tests are performed.
This motivates us to consider  the following assumption along our paper:
\begin{assumption}
Constants $\widehat{s}$ and $\widehat{C}$ in Lemma \ref{lmm:add_eigen} are independent of $\lambda$.
\end{assumption}
\section{The discrete eigenvalue problem}
\label{sec:FEMMM}
\subsection{The finite element spaces}
Given an integer $\ell\geq 0$ and a subset $D$ of $\mathbb{R}^n$, we denote by $\mathrm{P}_\ell(S)$ the space of polynomials of degree at most $\ell$ defined in $D$. We mention that, for tensorial fields we will define $\mathbb{P}_\ell(D):=[\mathrm{P}_\ell(D)]^{n\times n}$ and for vector fields $\mathbf{P}_\ell(D):=[\mathrm{P}_\ell(D)]^n$. With these ingredients at hand, for $k=0$ we define the local Raviart-Thomas space of the lowest order,
 as follows  (see \cite{MR3097958})
 \begin{equation*}
 \mathbf{RT}_0(T)=[\mathbf{P}_0(T)]\oplus \mathrm{P}_0(T)\boldsymbol{x},
 \end{equation*}
 where $\boldsymbol{x}\in\mathbb{R}^n$. With this local space, we define the global Raviart-Thomas space, which we denote by $\mathbb{RT}_0(\CT_h)$, as follows
 \begin{equation*}
 \mathbb{RT}_0(\CT_h):=\{\btau\in\mathbb{H}\,:\,(\tau_{i1},\cdots,\tau_{in})^{\texttt{t}}\in\mathbf{RT}_0(T)\,\,\forall i\in\{1,\ldots,n\},\,\,\forall T\in\CT_h\},
 \end{equation*}
 and we introduce the global space of piecewise polynomials of degree $\leq k$ defined by
 \begin{equation*}
 P_k(\CT_h):=\{v\in \L^2(\O)\,:\, v|_T\in \mathrm{P}_k(T)\,\,\,\,\forall T\in\CT_h\}.
 \end{equation*}
 
 Also, we define
 \begin{equation*}
 \mathbb{H}_{h,0}:=\mathbb{RT}_0(\CT_h)\cap\mathbb{H}_0(\bdiv;\O)=\left\{ \btau_h\in\mathbb{RT}_k(\CT_h)\,\,:\,\,\int_{\O}\tr(\btau_h)=0  \right\},
 \end{equation*}
 and $\mathbf{Q}_h:=\mathbf{P}_0(\CT_h)$. 
 
Now we recall some well known approximation properties for the spaces defined above (see \cite{MR2009375} for instance).  Let $\bPi_h^0:\mathbb{H}^t (\O)\rightarrow \mathbb{RT}_0(\CT_h)$ be the Raviart-Thomas interpolation operator. For $t\in (0,1/2]$ and $\btau\in\mathbb{H}^t(\O)\cap\mathbb{H}(\bdiv;\O)$ the following error estimate holds true
 \begin{equation} \label{daniel1}
 \|\btau-\bPi_h^0\btau\|_{0,\O}\leq Ch^t \big(\|\btau\|_{t,\O}+\|\bdiv\btau\|_{0,\O}\big).
 \end{equation}
 
 Also, for $\btau\in\mathbb{H}^t(\O)$ with $t>1/2$, there holds
 \begin{equation}\label{daniel2}
 \|\btau-\bPi_h^0\btau\|_{0,\O}\leq Ch^{\min\{t,1\}} |\btau|_{t,\O}.
 \end{equation} 
 
 Let $\mathcal{P}_h^0:\L^2(\O)^n\rightarrow\mathbf{Q}_h$ be the $\L^2(\O)$-orthogonal projector. As a first property, we have the following commutative diagram
 \begin{equation}
 \label{eq:commutative}
 \bdiv(\bPi_h^0\btau)=\mathcal{P}_h^0(\bdiv\btau).
 \end{equation}
 
 If $\bv\in\H^t (\O)^{n}$ with $t>0$, there holds
 \begin{equation}\label{daniel3}
 \|\bv-\mathcal{P}_h^0\bv\|_{0,\O}\leq Ch^{\min\{t,1 \}} |\bv|_{t,\O}.
 \end{equation}
 
 Finally, for each $\btau\in\mathbb{H}^t(\O)$ such that $\bdiv\btau\in\H^t (\O)^{n}$, there holds
 \begin{equation} \label{daniel4}
 \|\bdiv(\btau-\bPi_h^0)\|_{0,\O}\leq Ch^{\min\{t,1\}} |\bdiv\btau|_{t,\O}.
 \end{equation}

 \subsection{The discrete mixed eigenvalue problem}
 \label{sec:fem}
Now we introduce the finite element discretization of \eqref{def:spectral_1}, which  reads as follows:
 Find $\kappa_h\in\mathbb{R}$ and $(\boldsymbol{\rho}_h,\bu_h)\in \mathbb{H}_{h,0}\times \mathbf{Q}_h $ such that
 
 \begin{equation}\label{def:spectral_1h}
\left\{
\begin{array}{rcll}
a(\boldsymbol{\rho}_h,\btau_h)+b(\btau_h,\bu_h) & = &0&  \forall\btau_h\in\mathbb{H}_{h,0}, \\
b(\boldsymbol{\rho}_h,\bv_h)& = & -\kappa_h(\bu_h,\bv_h)_{0,\O} &  \forall\bv_h\in \mathbf{Q}_h.
\end{array}
\right.
\end{equation}

We introduce the discrete kernel of $b(\cdot,\cdot)$ as follows
\begin{equation*}
\mathbb{V}_h:=\{\btau_h\in\mathbb{H}_{0,h}\,:\, \bdiv\btau_h=\boldsymbol{0}\,\,\text{in}\,\O\}\subset\mathbb{V}.
\end{equation*}
Then, since $a(\cdot,\cdot)$ is $\mathbb{V}_h$-elliptic  and the following inf-sup condition holds (see \cite[Lemma 3.1]{MR3453481})
\begin{equation*}
\displaystyle\sup_{\underset{\btau\neq\boldsymbol{0}}{\btau\in\mathbb{H}_{0,h}}}\frac{b(\btau_h,\bv_h)}{\|\btau_h\|_{\bdiv,\O}}\geq\beta\|\bv_h\|_{0,\O}\quad\forall\bv_h\in \mathbf{Q}_h,
\end{equation*}
where $\beta>0$ is independent of $h$. 
%

In what follows, we assume that $\kappa$ is a simple eigenvalue and we normalize $\bu$ so that $\|\bu\|_{0,\O}=1$. Then, for all $\CT_{h}$, there exists a solution $(\kappa_{h},\boldsymbol{\rho}_h,\bu_h)\in\R\times\mathbb{H}_{0,h}\times\mathbf{Q}_h$ of  \eqref{def:spectral_1h}  such that $\kappa_{h}\rightarrow\kappa$ and $\|\bu_{h}\|_{0,\O}=1$.

The following result, summary a priori error estimates for problems  \eqref{def:spectral_H0} and \eqref{def:spectral_1h} are derived from \cite[Theorems 4.1 and 4.2]{inzunza2021displacementpseudostress}.
\begin{lemma}
\label{lema:apriorie}
Let $(\kappa, \boldsymbol{\rho},\bu)$ be a solution of Problem \eqref{def:spectral_H0} with $\|\bu\|_{0,\O}=1$. Then, there exists a solution $(\kappa_h, \boldsymbol{\rho}_h,\bu_h)$ be a solution of Problem \eqref{def:spectral_1h} with $\|\bu_h\|_{0,\O}=1$. Then
\begin{align*}
\|\boldsymbol{\rho}-\boldsymbol{\rho}_h\|_{0,\O}+\|\bu-\bu_h\|_{0,\O}&\leq C h^{s},\\
|\kappa-\kappa_h|&\leq C\left(\|\boldsymbol{\rho}-\boldsymbol{\rho}_h\|_{0,\O}^2+\|\bu-\bu_h\|_{0,\O}^2\right),
\end{align*}
where the constant $C$ is independent of $h$ and $\lambda$.
\end{lemma}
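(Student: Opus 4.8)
The plan is to recast \eqref{def:spectral_H0} and \eqref{def:spectral_1h} as eigenvalue problems for the solution operator of the mixed source problem \eqref{def:ssource_H0}, and then invoke the Babu\v{s}ka--Osborn theory of spectral approximation for compact selfadjoint operators, tracking the dependence on $\lambda$ along the way. First I would introduce $T:\mathbf{Q}\to\mathbf{Q}$, $T\boldsymbol{f}:=\widehat{\bu}_0$, with $(\widehat{\boldsymbol{\rho}}_0,\widehat{\bu}_0)$ the solution of \eqref{def:ssource_H0}, and the discrete operator $T_h:\mathbf{Q}\to\mathbf{Q}_h$ built from the discrete source problem on $\mathbb{H}_{h,0}\times\mathbf{Q}_h$. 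Comparing \eqref{def:spectral_H0} with \eqref{def:ssource_H0}, $(\kappa,\bu)$ is an eigenpair of \eqref{def:spectral_H0} if and only if $(\kappa^{-1},\bu)$ is an eigenpair of $T$, and likewise for $T_h$; using the symmetry of $a(\cdot,\cdot)$ and the saddle-point equations one checks $(T\boldsymbol{f},\boldsymbol{g})_{0,\O}=(\boldsymbol{f},T\boldsymbol{g})_{0,\O}$ and $(T\boldsymbol{f},\boldsymbol{f})_{0,\O}=a(\widehat{\boldsymbol{\rho}}_0,\widehat{\boldsymbol{\rho}}_0)\ge 0$, so $T$ is selfadjoint and positive on $\mathbf{Q}$, while Lemma~\ref{lmm:add_eigen} puts its range in $\H^{1+s}(\O)^n$, which embeds compactly into $\mathbf{Q}$, so $T$ is compact. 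Well-posedness and the uniform stability bounds needed below come from \eqref{eq:complete_infsup}, the $\mathbb{V}_h$-ellipticity of $a(\cdot,\cdot)$ and the discrete inf-sup condition of \cite[Lemma 3.1]{MR3453481}, with all constants $\lambda$-robust because in the splitting \eqref{eq:identity_a} the $\lambda$-dependent term carries a coefficient bounded uniformly in $\lambda$.

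Next I would prove $\|T-T_h\|_{\mathcal{L}(\mathbf{Q})}\to 0$. Given $\boldsymbol{f}\in\mathbf{Q}$ I split $\boldsymbol{f}=\mathcal{P}_h^0\boldsymbol{f}+(\boldsymbol{f}-\mathcal{P}_h^0\boldsymbol{f})$: for $\mathcal{P}_h^0\boldsymbol{f}\in\mathbf{Q}_h$ the commuting property \eqref{eq:commutative} annihilates the $\bdiv$-part of the Raviart--Thomas interpolation error, so the Babu\v{s}ka--Brezzi/C\'ea estimate together with \eqref{daniel1}--\eqref{daniel2} and the ($\lambda$-uniform) regularity of the source solution gives $\|(T-T_h)\mathcal{P}_h^0\boldsymbol{f}\|_{0,\O}\le Ch^{s}\|\boldsymbol{f}\|_{0,\O}$; for the orthogonal remainder, the $b(\cdot,\cdot)$-orthogonality forces $T_h(\boldsymbol{f}-\mathcal{P}_h^0\boldsymbol{f})=\mathbf{0}$, so $(T-T_h)(\boldsymbol{f}-\mathcal{P}_h^0\boldsymbol{f})=T(I-\mathcal{P}_h^0)\boldsymbol{f}$, and compactness of $T$ with $\mathcal{P}_h^0\to I$ pointwise (uniformly bounded) yields $\|T(I-\mathcal{P}_h^0)\|_{\mathcal{L}(\mathbf{Q})}\to 0$. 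Running the same estimates along the eigenfunction $\bu$ (smooth by Lemma~\ref{lmm:add_eigen}, with the pseudostress bottleneck $\|\boldsymbol{\rho}-\bPi_h^0\boldsymbol{\rho}\|_{0,\O}=O(h^s)$) gives $\|(T-T_h)\bu\|_{0,\O}\le Ch^{s}$.

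The Babu\v{s}ka--Osborn theory then produces, for $h$ small, a unique eigenvalue $\kappa_h$ of \eqref{def:spectral_1h} with $\kappa_h\to\kappa$ and a normalized eigenfunction obeying $\|\bu-\bu_h\|_{0,\O}\le C\|(T-T_h)\bu\|_{0,\O}\le Ch^{s}$. Since $(\boldsymbol{\rho},\bu)$ and $(\boldsymbol{\rho}_h,\bu_h)$ solve the continuous and the discrete source problems with data $\kappa\bu$ and $\kappa_h\bu_h$ respectively, a Strang-type argument combining the source-problem C\'ea estimate with $\|\kappa\bu-\kappa_h\bu_h\|_{0,\O}\le C(|\kappa-\kappa_h|+\|\bu-\bu_h\|_{0,\O})\le Ch^{s}$ gives $\|\boldsymbol{\rho}-\boldsymbol{\rho}_h\|_{0,\O}\le Ch^{s}$; adding the two bounds proves the first estimate, with $C$ independent of $h$ and $\lambda$.

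For the double-order estimate I would test \eqref{def:spectral_H0} and \eqref{def:spectral_1h} with their own solutions and with each other's, using $\|\bu\|_{0,\O}=\|\bu_h\|_{0,\O}=1$; this gives $a(\boldsymbol{\rho},\boldsymbol{\rho})=\kappa$, $a(\boldsymbol{\rho}_h,\boldsymbol{\rho}_h)=\kappa_h$ and $a(\boldsymbol{\rho}_h,\boldsymbol{\rho})=\kappa_h(\bu_h,\bu)_{0,\O}$, whence a short computation yields
\[
\kappa-\kappa_h=a(\boldsymbol{\rho}-\boldsymbol{\rho}_h,\boldsymbol{\rho}-\boldsymbol{\rho}_h)-\kappa_h\,\|\bu-\bu_h\|_{0,\O}^{2}.
\]
Because $a(\cdot,\cdot)$ is bounded on $\mathbb{L}^2(\O)$ with a $\lambda$-independent constant (again by \eqref{eq:identity_a}) and $\kappa_h$ stays bounded, this gives $|\kappa-\kappa_h|\le C(\|\boldsymbol{\rho}-\boldsymbol{\rho}_h\|_{0,\O}^{2}+\|\bu-\bu_h\|_{0,\O}^{2})$, which is the second estimate (these two displays are precisely \cite[Theorems 4.1 and 4.2]{inzunza2021displacementpseudostress}). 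The step I expect to be the main obstacle is making every constant above genuinely $\lambda$-independent: this hinges on the $\lambda$-robust form of \eqref{eq:complete_infsup} (equivalently the uniform isomorphism property of $\mathcal{A}$ coming from \cite[Lemma 4.1]{MR3453481} via the harmless splitting \eqref{eq:identity_a}) and on the $\lambda$-uniform regularity of the eigenpair granted by the standing Assumption; the algebra behind the Rayleigh-quotient identity, by contrast, is routine.
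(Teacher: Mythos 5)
Your argument is correct and is essentially the proof the paper relies on: the paper does not prove this lemma itself but imports it from \cite[Theorems 4.1 and 4.2]{inzunza2021displacementpseudostress}, and those theorems are established by precisely the Babu\v{s}ka--Osborn solution-operator argument you reconstruct (compact selfadjoint $T$ from the source problem \eqref{def:ssource_H0}, norm convergence $\|T-T_h\|_{\mathcal{L}(\mathbf{Q})}\to 0$ via the commuting projection, and the Rayleigh-quotient identity $\kappa-\kappa_h=a(\boldsymbol{\rho}-\boldsymbol{\rho}_h,\boldsymbol{\rho}-\boldsymbol{\rho}_h)-\kappa_h\|\bu-\bu_h\|_{0,\O}^{2}$ for the doubled order). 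Your tracking of the $\lambda$-independence through \eqref{eq:identity_a} and the standing Assumption also matches what the paper needs from the cited result.
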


An important result,  consequence of the spurious free feature of the proposed finite element method, states that for $h$ small enough, except 
for $\kappa_h$, the rest of the eigenvalues of \eqref{def:spectral_1h} are well separated from $\kappa$ (see  \cite{MR3647956}).

\begin{prop}\label{separa_eig}
Let us enumerate the eigenvalues of  \eqref{def:spectral_1h} and \eqref{def:spectral_1} in increasing order as follows: $0<\kappa_1\leq\cdots\kappa_i\leq\cdots$ and 
$0<\kappa_{h,1}\leq\cdots\kappa_{h,i}\leq\cdots$. Let us assume  that $\kappa_J$ is a simple eigenvalue of \eqref{def:spectral_1h}. Then, there exists $h_0>0$ such that
\begin{equation*}
|\kappa_J-\kappa_{h,i}|\geq\frac{1}{2}\min_{j\neq J}|\kappa_j-\kappa_J|\quad\forall i\leq \dim\mathbb{H}_h,\,\,i\neq J,\quad \forall h<h_0.
\end{equation*}
\end{prop}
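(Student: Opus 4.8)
The plan is to derive this eigenvalue separation result as a standard consequence of the spectral convergence of the discrete solution operator to the continuous one, combined with the a priori convergence $\kappa_h \to \kappa$ from Lemma \ref{lema:apriorie}. First I would recall the solution operator framework: associate to the source problem \eqref{def:ssource_H0} the bounded operator $T : \mathbf{Q} \to \mathbf{Q}$ defined by $T\boldsymbol{f} = \widehat{\bu}_0$, which is self-adjoint and compact (compactness coming from the regularity estimate in Lemma \ref{lmm:add_eigen}, i.e.\ $\widehat{\bu}_0 \in \H^{1+s}(\O)^n$ with a compact embedding into $\L^2(\O)^n$). Its nonzero eigenvalues are exactly $1/\kappa_i$, and analogously $T_h : \mathbf{Q}_h \to \mathbf{Q}_h$ has eigenvalues $1/\kappa_{h,i}$. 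The key input is that $\|T - T_h\|_{\mathcal{L}(\mathbf{Q})} \to 0$ as $h \to 0$; this follows from the a priori estimate $\|\widehat{\bu}_0 - \widehat{\bu}_{0,h}\|_{0,\O} \le C h^s \|\boldsymbol{f}\|_{0,\O}$ that underlies Lemma \ref{lema:apriorie} (the method is free of spurious modes precisely because of this norm convergence, cf.\ \cite{MR3647956}).

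The main argument then goes as follows. Since $\kappa_J$ is simple, set $\gamma := \min_{j \neq J} |\kappa_j - \kappa_J| > 0$. The spectrum of the continuous problem near $\kappa_J$ consists of $\kappa_J$ alone, separated from the rest by $\gamma$. By the Babuška–Osborn spectral approximation theory applied to $T \to T_h$ (uniform norm convergence plus self-adjointness), for $h$ small enough there is exactly one discrete eigenvalue, call it $\kappa_{h,J}$, converging to $\kappa_J$, and $|\kappa_{h,J} - \kappa_J| \to 0$; this is the statement already quoted as $\kappa_h \to \kappa$. Moreover, no other discrete eigenvalue can accumulate near $\kappa_J$: if some sequence $\kappa_{h,i(h)}$ with $i(h) \neq J$ stayed within, say, distance $\gamma/2$ of $\kappa_J$, then $1/\kappa_{h,i(h)}$ would be a sequence of eigenvalues of $T_h$ staying at positive distance from $1/\kappa_j$ for every $j$, contradicting that every approximating eigenvalue of $T_h$ must converge to some eigenvalue of $T$ (a consequence of $\|T - T_h\| \to 0$ and the fact that the resolvent $(z - T)^{-1}$ is uniformly bounded for $z$ on a circle separating $1/\kappa_J$ from the rest of $\sigma(T)$). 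Choosing $h_0$ so that for $h < h_0$ both $|\kappa_{h,J} - \kappa_J| < \gamma/2$ holds and the resolvent bound forces every other $\kappa_{h,i}$ outside the interval $(\kappa_J - \gamma/2, \kappa_J + \gamma/2)$, we obtain $|\kappa_J - \kappa_{h,i}| \ge \gamma/2 = \frac{1}{2}\min_{j\neq J}|\kappa_j - \kappa_J|$ for all $i \le \dim \mathbb{H}_h$ with $i \neq J$.

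I would present this compactly: state the operators $T, T_h$, cite \cite{MR3647956} (or the Babuška–Osborn theory) for $\|T - T_h\|_{\mathcal{L}(\mathbf{Q})} \to 0$ and for the "no spurious eigenvalues" / "no accumulation" property, then run the $\gamma/2$-interval argument above. The main obstacle — really the only nontrivial point — is justifying the non-accumulation of \emph{other} discrete eigenvalues near $\kappa_J$: convergence $\kappa_h \to \kappa$ alone controls only the eigenvalue tracking $\kappa_J$, so one genuinely needs the uniform resolvent estimate (equivalently, the spectral-projector convergence $\|E - E_h\| \to 0$ on a contour around $1/\kappa_J$) that comes from $\|T - T_h\| \to 0$ to rule out a rogue eigenvalue sneaking into the gap. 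Everything else is routine once the compact self-adjoint operator framework and the a priori $\L^2$-convergence of the source problem are in place.
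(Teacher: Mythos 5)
Your argument is correct and follows essentially the same route the paper intends: the paper gives no written proof of Proposition \ref{separa_eig}, only the citation to \cite{MR3647956}, and your sketch --- compact self-adjoint solution operators $T$, $T_h$ with eigenvalues $1/\kappa_i$, $1/\kappa_{h,i}$, norm convergence $\|T-T_h\|\to 0$ from the uniform source-problem estimate, and the uniform resolvent bound on a contour isolating $1/\kappa_J$ forcing the Riesz projectors to converge to a rank-one projector so that no other discrete eigenvalue can enter the gap --- is exactly the standard argument that reference supplies. The one detail worth tightening is that to obtain the stated constant $\tfrac12\min_{j\neq J}|\kappa_j-\kappa_J|$ in the $\kappa$ variable the contour must enclose the full image of the interval $(\kappa_J-\gamma/2,\kappa_J+\gamma/2)$ under $t\mapsto 1/t$ rather than merely a small circle about $1/\kappa_J$, which is possible precisely because that image contains no other point of $\sigma(T)$.
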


\subsection{Superconvergence and Postprocessing}
In this section we  derive a superconvergence result between the eigenfunction $\bu$ and its finite element  approximation, together with a  postprocess for the aforementioned unknown.   For simplicity, we only concentrate on the simple eigenvalue case along our paper. 

The forthcoming analysis is inspired by \cite{MR3712172,MR3918688,MR3047040}. Consider the following mixed problem: Find $(\widetilde{\boldsymbol{\rho}}_h,\widetilde{\bu}_h)\in\mathbb{H}_{h,0}\times\mathbf{Q}_h$ such that
\begin{equation}\label{eq:3.3a}
\left\{
\begin{array}{rcll}
a(\widetilde{\boldsymbol{\rho}}_h,\btau_h)+b(\btau_h,\widetilde{\bu}_h)&=&0\quad&\forall\,\btau_h\in\mathbb{H}_{h,0}, \\
b(\widetilde{\boldsymbol{\rho}}_h,\bv_h)&=&-\kappa(\widetilde{\bu}_h,\bv_h)_{0,\O}\quad&\forall \,\bv\in\mathbf{Q}_h, 
\end{array}
\right.
\end{equation}
where  the solution $(\widetilde{\boldsymbol{\rho}}_h,\widetilde{\bu}_h)$ of \eqref{eq:3.3a}  is the finite element approximation of  $(\boldsymbol{\rho},\bu)$.

The proofs of the following results are inspired in \cite{MR3918688}, but taking into account the bilinear form $a(\cdot,\cdot)$ defined in \eqref{eq:identity_a}, where all the estimates are independent of the Lam\'e constant $\lambda$. The first auxiliary result shows a higher-order approximation between  $\widetilde{\bu}_{h}$ and $\mathcal{P}_h^0\bu$.
\begin{lemma}
\label{lmm:rodolfo9}
Let $(\kappa, \boldsymbol{\rho},\bu)$ be a solution of Problem \eqref{def:spectral_H0} and $(\widetilde{\boldsymbol{\rho}}_{h},\widetilde{\bu}_{h})$ be a solution of \eqref{eq:3.3a}. Then, there holds 
\begin{equation*}
\|\widetilde{\bu}_h-\mathcal{P}_h^0\bu\|_{0,\O}\leq C h^{s}\|\boldsymbol{\rho}-\widetilde{\boldsymbol{\rho}}_h\|_{\bdiv,\O},
\end{equation*}
where $s\in (0,\widehat{s}\,]$ and the  positive constant $C$ is independent of $h$ and $\lambda$.
\end{lemma}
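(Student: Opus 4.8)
The plan is to estimate $\|\widetilde{\bu}_h - \mathcal{P}_h^0\bu\|_{0,\O}$ by testing a suitable discrete problem against $\widetilde{\bu}_h - \mathcal{P}_h^0\bu$ and using the discrete inf-sup condition for $b(\cdot,\cdot)$ together with the commuting diagram property \eqref{eq:commutative}. First I would observe that $\mathcal{P}_h^0\bu \in \mathbf{Q}_h$ and $\bPi_h^0\boldsymbol{\rho}$ (properly corrected to lie in $\mathbb{H}_{h,0}$) form a natural companion to $\widetilde{\boldsymbol{\rho}}_h,\widetilde{\bu}_h$. Since $\widetilde{\bu}_h - \mathcal{P}_h^0\bu \in \mathbf{Q}_h$, the discrete inf-sup condition gives
\[
\beta\|\widetilde{\bu}_h-\mathcal{P}_h^0\bu\|_{0,\O}\leq \sup_{\boldsymbol{0}\neq\btau_h\in\mathbb{H}_{0,h}}\frac{b(\btau_h,\widetilde{\bu}_h-\mathcal{P}_h^0\bu)}{\|\btau_h\|_{\bdiv,\O}}.
\]
The task then reduces to controlling the numerator $b(\btau_h,\widetilde{\bu}_h) - b(\btau_h,\mathcal{P}_h^0\bu)$ for an arbitrary $\btau_h\in\mathbb{H}_{0,h}$.

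Next I would rewrite each term of the numerator. Using the first equation of \eqref{eq:3.3a}, $b(\btau_h,\widetilde{\bu}_h) = -a(\widetilde{\boldsymbol{\rho}}_h,\btau_h)$. For the other term, I would use the fact that $\bu$ satisfies the first equation of the continuous problem \eqref{def:spectral_H0}, namely $a(\boldsymbol{\rho},\btau) + b(\btau,\bu) = 0$ for all $\btau\in\mathbb{H}_0$; since $\btau_h\in\mathbb{H}_{0,h}\subset\mathbb{H}_0$, this gives $b(\btau_h,\bu) = -a(\boldsymbol{\rho},\btau_h)$. Then I would relate $b(\btau_h,\mathcal{P}_h^0\bu)$ to $b(\btau_h,\bu)$: by definition $b(\btau_h,\bv) = \int_\O \bv\cdot\bdiv\btau_h$, and since $\bdiv\btau_h$ is piecewise constant (it lies in $\mathbf{Q}_h$), the $\L^2$-projection is exact, so $b(\btau_h,\mathcal{P}_h^0\bu) = \int_\O \mathcal{P}_h^0\bu\cdot\bdiv\btau_h = \int_\O\bu\cdot\bdiv\btau_h = b(\btau_h,\bu)$. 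Hence the numerator collapses to $-a(\widetilde{\boldsymbol{\rho}}_h,\btau_h) + a(\boldsymbol{\rho},\btau_h) = a(\boldsymbol{\rho}-\widetilde{\boldsymbol{\rho}}_h,\btau_h)$.

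At this point a naive bound using continuity of $a(\cdot,\cdot)$ would only give $\|\widetilde{\bu}_h-\mathcal{P}_h^0\bu\|_{0,\O}\leq C\|\boldsymbol{\rho}-\widetilde{\boldsymbol{\rho}}_h\|_{0,\O}$, without the extra factor $h^s$. To gain the superconvergent power $h^s$, I would exploit an Aubin–Nitsche / duality argument: introduce the dual source problem with right-hand side $\widetilde{\bu}_h-\mathcal{P}_h^0\bu$, whose solution $(\boldsymbol{\psi},\boldsymbol{z})$ enjoys the elliptic regularity of Lemma \ref{lmm:add_eigen} (with constants independent of $\lambda$ by the Assumption), so $\boldsymbol{\psi}\in\mathbb{H}^s$ and $\boldsymbol{z}\in\H^{1+s}$. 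Testing $a(\boldsymbol{\rho}-\widetilde{\boldsymbol{\rho}}_h,\btau_h)$ with the choice $\btau_h = \bPi_h^0\boldsymbol{\psi}$ (again corrected into $\mathbb{H}_{0,h}$, which is harmless since traces integrate to zero on the kernel), and using Galerkin orthogonality of the pair $(\boldsymbol{\rho},\bu)-(\widetilde{\boldsymbol{\rho}}_h,\widetilde{\bu}_h)$ together with the interpolation estimates \eqref{daniel1}–\eqref{daniel4} and the commuting diagram \eqref{eq:commutative}, the mismatch $a(\boldsymbol{\rho}-\widetilde{\boldsymbol{\rho}}_h,\boldsymbol{\psi}-\bPi_h^0\boldsymbol{\psi})$ produces the factor $h^s\|\boldsymbol{\psi}\|_{s,\O}\lesssim h^s\|\widetilde{\bu}_h-\mathcal{P}_h^0\bu\|_{0,\O}$ times $\|\boldsymbol{\rho}-\widetilde{\boldsymbol{\rho}}_h\|_{\bdiv,\O}$. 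Dividing through by $\|\widetilde{\bu}_h-\mathcal{P}_h^0\bu\|_{0,\O}$ yields the claim.

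I expect the main obstacle to be the bookkeeping in the duality step: one must carefully choose the dual problem so that its solution pairs correctly against $\boldsymbol{\rho}-\widetilde{\boldsymbol{\rho}}_h$ (in particular identifying which component of the mixed dual solution plays the role of the test function), verify that projecting the dual pseudostress into the zero-trace subspace $\mathbb{H}_{0,h}$ does not alter the relevant bilinear-form evaluations (this uses that $b$ only sees $\bdiv$, which annihilates the $\mathbb{R}\mathbb{I}$ component, and the corresponding property of $a(\cdot,\cdot)$ in the form \eqref{eq:identity_a}), and track that every constant — including $\widehat{C}$ from Lemma \ref{lmm:add_eigen} — is $\lambda$-independent under the stated Assumption. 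The algebraic collapse of the numerator and the use of the discrete inf-sup are routine; the regularity-driven gain of $h^s$ is where the argument must be handled with care.
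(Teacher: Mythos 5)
The paper does not actually write out a proof of this lemma: it disposes of it in one line by invoking Lemma 9 of \cite{MR3918688} together with stability results from \cite{inzunza2021displacementpseudostress}. Your skeleton --- the identity $b(\btau_h,\widetilde{\bu}_h-\mathcal{P}_h^0\bu)=a(\boldsymbol{\rho}-\widetilde{\boldsymbol{\rho}}_h,\btau_h)$ obtained from the first equations of \eqref{def:spectral_H0} and \eqref{eq:3.3a} and the fact that $\bdiv\btau_h\in\mathbf{Q}_h$, followed by a duality argument with the source problem \eqref{def:ssource_H0} with datum $\widetilde{\bu}_h-\mathcal{P}_h^0\bu$, the commuting diagram \eqref{eq:commutative} and the interpolation estimate \eqref{daniel1} --- is precisely the template that the cited lemma follows, so the overall approach is the right one. (A small structural remark: once you run the duality argument, the discrete inf-sup step is superfluous, since choosing $\bv=\widetilde{\bu}_h-\mathcal{P}_h^0\bu$ in the second dual equation produces $\|\widetilde{\bu}_h-\mathcal{P}_h^0\bu\|_{0,\O}^2$ directly; you are conflating two alternative routes.)

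There is, however, a genuine gap at the decisive step. Writing $a(\boldsymbol{\rho}-\widetilde{\boldsymbol{\rho}}_h,\bPi_h^0\boldsymbol{\psi})=a(\boldsymbol{\rho}-\widetilde{\boldsymbol{\rho}}_h,\bPi_h^0\boldsymbol{\psi}-\boldsymbol{\psi})+a(\boldsymbol{\rho}-\widetilde{\boldsymbol{\rho}}_h,\boldsymbol{\psi})$, you only estimate the first summand (the ``mismatch''), which indeed gives $Ch^{s}\|\boldsymbol{\rho}-\widetilde{\boldsymbol{\rho}}_h\|_{0,\O}\,\|\widetilde{\bu}_h-\mathcal{P}_h^0\bu\|_{0,\O}$ by \eqref{daniel1} and the regularity of the dual solution. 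The second summand does not vanish: the Galerkin orthogonality you invoke is only available against \emph{discrete} test tensors, and $\boldsymbol{\psi}$ is not discrete; moreover the second error equation of the pair \eqref{def:spectral_H0}--\eqref{eq:3.3a} carries a nonzero $\kappa$--residual, so the orthogonality is at best partial. That term must be converted, via the first equation of the dual problem, into $-b(\boldsymbol{\rho}-\widetilde{\boldsymbol{\rho}}_h,\boldsymbol{z})$, then split with $\mathcal{P}_h^0\boldsymbol{z}$ and controlled using the second equations of the two problems; this is exactly where $\|\bdiv(\boldsymbol{\rho}-\widetilde{\boldsymbol{\rho}}_h)\|_{0,\O}$ --- hence the $\bdiv$--norm on the right-hand side of the statement, which your argument never actually uses --- enters, and where one must verify that the leftover $\kappa$--terms either vanish (reading \eqref{eq:3.3a} as the discrete source problem) or can be absorbed for $h$ small. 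Without an account of $a(\boldsymbol{\rho}-\widetilde{\boldsymbol{\rho}}_h,\boldsymbol{\psi})$, your chain of estimates only delivers $\|\widetilde{\bu}_h-\mathcal{P}_h^0\bu\|_{0,\O}\leq C\|\boldsymbol{\rho}-\widetilde{\boldsymbol{\rho}}_h\|_{0,\O}$, i.e.\ no gain of the factor $h^{s}$, which is the whole content of the lemma.
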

\begin{proof}
Adapting the proof of Lemma 9 of \cite{MR3918688}, together with Lemma 2.1 and Remark 2.1 of \cite{inzunza2021displacementpseudostress}, it follows that:
\begin{equation*}
\|\widetilde{\bu}_h-\mathcal{P}_h^0\bu\|_{0,\O}\leq C h^{s}\|\boldsymbol{\rho}-\widetilde{\boldsymbol{\rho}}_h\|_{\bdiv,\O}.
\end{equation*}
This concludes the proof.
\end{proof}
 
The following auxiliary result shows that the term $\|\boldsymbol{\rho}-\widetilde{\boldsymbol{\rho}}_h\|_{\bdiv,\O}$ is bounded.
\begin{lemma}
\label{lmm:rodolfo10}
Let $(\kappa, \boldsymbol{\rho},\bu)$ be a solution of Problem \eqref{def:spectral_H0} and $(\widetilde{\boldsymbol{\rho}}_{h},\widetilde{\bu}_{h})$ be a solution of \eqref{eq:3.3a}. Then, there holds
\begin{equation*}
\|\boldsymbol{\rho}-\widetilde{\boldsymbol{\rho}}_h\|_{\bdiv,\O}\leq C\left( \|\boldsymbol{\rho}-\boldsymbol{\rho}_h\|_{0,\O}+\|\bu-\bu_{h}\|_{0,\O}\right),
\end{equation*}
 where the  positive constant $C$  is independent of $h$ and $\lambda$.
\end{lemma}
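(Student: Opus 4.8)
The plan is to estimate $\|\boldsymbol{\rho}-\widetilde{\boldsymbol{\rho}}_h\|_{\bdiv,\O}$ by comparing the auxiliary problem \eqref{eq:3.3a} with the discrete eigenvalue problem \eqref{def:spectral_1h}, using the triangle inequality through the intermediate quantity $\boldsymbol{\rho}_h$: namely, $\|\boldsymbol{\rho}-\widetilde{\boldsymbol{\rho}}_h\|_{\bdiv,\O}\leq\|\boldsymbol{\rho}-\boldsymbol{\rho}_h\|_{\bdiv,\O}+\|\boldsymbol{\rho}_h-\widetilde{\boldsymbol{\rho}}_h\|_{\bdiv,\O}$. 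For the first term one uses the commutative diagram property \eqref{eq:commutative}, which gives $\bdiv\boldsymbol{\rho}_h=\mathcal{P}_h^0(\bdiv\boldsymbol{\rho})=-\kappa_h\mathcal{P}_h^0\bu$ (reading off the second equation of \eqref{def:spectral_1h} and testing against $\mathbf{Q}_h$-functions, since $\bdiv\btau_h\in\mathbf{Q}_h$), while $\bdiv\boldsymbol{\rho}=-\kappa\bu$; hence $\|\bdiv(\boldsymbol{\rho}-\boldsymbol{\rho}_h)\|_{0,\O}\leq\|\kappa\bu-\kappa_h\mathcal{P}_h^0\bu\|_{0,\O}$, which is controlled by $|\kappa-\kappa_h|$, $\|\bu-\mathcal{P}_h^0\bu\|_{0,\O}$ and the approximation estimate \eqref{daniel3}, and ultimately by $\|\boldsymbol{\rho}-\boldsymbol{\rho}_h\|_{0,\O}+\|\bu-\bu_h\|_{0,\O}$ via Lemma \ref{lema:apriorie}; combining with the $\mathbb{L}^2$ bound already present in that lemma yields $\|\boldsymbol{\rho}-\boldsymbol{\rho}_h\|_{\bdiv,\O}\leq C(\|\boldsymbol{\rho}-\boldsymbol{\rho}_h\|_{0,\O}+\|\bu-\bu_h\|_{0,\O})$.

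For the second term $\|\boldsymbol{\rho}_h-\widetilde{\boldsymbol{\rho}}_h\|_{\bdiv,\O}$, I would subtract the two discrete systems \eqref{def:spectral_1h} and \eqref{eq:3.3a}: the pair $(\boldsymbol{\rho}_h-\widetilde{\boldsymbol{\rho}}_h,\bu_h-\widetilde{\bu}_h)\in\mathbb{H}_{h,0}\times\mathbf{Q}_h$ satisfies
\begin{equation*}
\left\{
\begin{array}{rcll}
a(\boldsymbol{\rho}_h-\widetilde{\boldsymbol{\rho}}_h,\btau_h)+b(\btau_h,\bu_h-\widetilde{\bu}_h)&=&0&\forall\,\btau_h\in\mathbb{H}_{h,0},\\
b(\boldsymbol{\rho}_h-\widetilde{\boldsymbol{\rho}}_h,\bv_h)&=&-\kappa_h(\bu_h,\bv_h)_{0,\O}+\kappa(\widetilde{\bu}_h,\bv_h)_{0,\O}&\forall\,\bv_h\in\mathbf{Q}_h.
\end{array}
\right.
\end{equation*}
The right-hand side of the second line can be rewritten as $-\kappa_h(\bu_h-\widetilde{\bu}_h,\bv_h)_{0,\O}+(\kappa-\kappa_h)(\widetilde{\bu}_h,\bv_h)_{0,\O}$, so this is a mixed saddle-point system with a forcing term measured by $|\kappa-\kappa_h|$ plus a lower-order perturbation proportional to $\|\bu_h-\widetilde{\bu}_h\|_{0,\O}$. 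Invoking the discrete stability of the scheme (the $\mathbb{V}_h$-ellipticity of $a(\cdot,\cdot)$ and the discrete inf-sup condition stated after \eqref{def:spectral_1h}, equivalently a discrete analogue of the isomorphism property of $\mathcal{A}$), one obtains $\|\boldsymbol{\rho}_h-\widetilde{\boldsymbol{\rho}}_h\|_{\bdiv,\O}+\|\bu_h-\widetilde{\bu}_h\|_{0,\O}\leq C|\kappa-\kappa_h|$ (the perturbation term being absorbed on the left for $h$ small, since $\kappa_h$ is bounded). Finally, $|\kappa-\kappa_h|\leq C(\|\boldsymbol{\rho}-\boldsymbol{\rho}_h\|_{0,\O}^2+\|\bu-\bu_h\|_{0,\O}^2)\leq C(\|\boldsymbol{\rho}-\boldsymbol{\rho}_h\|_{0,\O}+\|\bu-\bu_h\|_{0,\O})$ by Lemma \ref{lema:apriorie} (the squares are dominated by the first powers since these quantities are $\leq Ch^s\to 0$), which closes the estimate after collecting all constants; all constants remain $\lambda$-independent because every ingredient (Lemma \ref{lema:apriorie}, the stability constants, the approximation estimates) is $\lambda$-independent under Assumption 2.1.

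The main obstacle I anticipate is making the discrete stability argument for $(\boldsymbol{\rho}_h-\widetilde{\boldsymbol{\rho}}_h,\bu_h-\widetilde{\bu}_h)$ fully rigorous with constants independent of $h$ and $\lambda$: one must argue that the perturbed discrete saddle-point problem inherits a uniform inf-sup / coercivity bound, handle the $-\kappa_h(\bu_h-\widetilde{\bu}_h,\cdot)_{0,\O}$ term by absorption (which requires the uniform boundedness of $\kappa_h$ from the spectral convergence and a smallness-of-$h$ threshold, cf.\ Proposition \ref{separa_eig}), and confirm that the constant coming from \eqref{eq:complete_infsup}-type estimates does not degenerate as $\lambda\to\infty$ — this last point is exactly where the reformulation \eqref{eq:identity_a} of $a(\cdot,\cdot)$ and the $\lambda$-uniform analysis of \cite{MR3453481,inzunza2021displacementpseudostress} are essential. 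The remaining steps (commutative diagram, triangle inequalities, absorbing squares into first powers) are routine.
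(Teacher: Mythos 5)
Your overall skeleton --- triangle inequality through $(\boldsymbol{\rho}_h,\bu_h)$, control of $\|\bdiv(\boldsymbol{\rho}-\boldsymbol{\rho}_h)\|_{0,\O}$ through the relations $\bdiv\boldsymbol{\rho}=-\kappa\bu$ and $\bdiv\boldsymbol{\rho}_h=-\kappa_h\bu_h$, and discrete stability for the difference of the two discrete systems --- is the right one and is essentially what the paper intends via its citation of the proof of Lemma 10 of \cite{MR3918688}. Two points need attention; the second is a genuine gap. The minor one: $\bdiv\boldsymbol{\rho}_h$ is not $\mathcal{P}_h^0(\bdiv\boldsymbol{\rho})$; the commutative diagram \eqref{eq:commutative} concerns the interpolant $\bPi_h^0\boldsymbol{\rho}$, not the discrete eigenfunction. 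The identity you actually need is $\bdiv\boldsymbol{\rho}_h=-\kappa_h\bu_h$, which follows directly from the second equation of \eqref{def:spectral_1h} because $\bdiv\boldsymbol{\rho}_h\in\mathbf{Q}_h$; then $\|\bdiv(\boldsymbol{\rho}-\boldsymbol{\rho}_h)\|_{0,\O}=\|\kappa\bu-\kappa_h\bu_h\|_{0,\O}\leq|\kappa-\kappa_h|+\kappa_h\|\bu-\bu_h\|_{0,\O}$ and Lemma \ref{lema:apriorie} finishes this term, with no projector and no appeal to \eqref{daniel3}.

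The serious issue is the stability argument for $(\boldsymbol{\rho}_h-\widetilde{\boldsymbol{\rho}}_h,\bu_h-\widetilde{\bu}_h)$. After rewriting the right-hand side as $-\kappa_h(\bu_h-\widetilde{\bu}_h,\bv_h)_{0,\O}+(\kappa-\kappa_h)(\widetilde{\bu}_h,\bv_h)_{0,\O}$, the inf-sup bound gives $\|\boldsymbol{\rho}_h-\widetilde{\boldsymbol{\rho}}_h\|_{\bdiv,\O}+\|\bu_h-\widetilde{\bu}_h\|_{0,\O}\leq C\bigl(\kappa_h\|\bu_h-\widetilde{\bu}_h\|_{0,\O}+|\kappa-\kappa_h|\bigr)$, and absorbing the first term on the left requires $C\kappa_h<1$; boundedness of $\kappa_h$ does not give this, since $C$ is a fixed stability constant and $\kappa_h\to\kappa>0$. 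The failure is structural rather than technical: the perturbed discrete operator you are trying to invert is \emph{exactly} singular, because $\kappa_h$ is a discrete eigenvalue and $(\boldsymbol{\rho}_h,\bu_h)$ itself lies in its kernel, so the component of $\bu_h-\widetilde{\bu}_h$ along the discrete eigenspace cannot be controlled by any residual/absorption argument. Controlling that component genuinely requires the spectral separation of Proposition \ref{separa_eig}, but that is precisely the content of Lemma \ref{lmm:rodolfo11}, which is proved \emph{after} and \emph{using} the present lemma, so invoking it here would be circular. The way out is to use the role of \eqref{eq:3.3a} announced in the text: $(\widetilde{\boldsymbol{\rho}}_h,\widetilde{\bu}_h)$ is the Galerkin (source-problem) approximation of $(\boldsymbol{\rho},\bu)$, i.e.\ the second equation carries the fixed datum $\kappa\bu$ rather than $\kappa\widetilde{\bu}_h$. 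Then the difference of \eqref{def:spectral_1h} and \eqref{eq:3.3a} is a discrete saddle-point system whose second equation has the fully known right-hand side $-(\kappa\bu-\kappa_h\bu_h,\bv_h)_{0,\O}$, discrete stability applies with no absorption, and $\|\kappa\bu-\kappa_h\bu_h\|_{0,\O}\leq C\left(\|\boldsymbol{\rho}-\boldsymbol{\rho}_h\|_{0,\O}+\|\bu-\bu_h\|_{0,\O}\right)$ closes the estimate exactly as you intend.
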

\begin{proof} The proof follows from \cite[proof of Lemma 10]{MR3918688}, Lemma \ref{lema:apriorie} and   the stability of the  discrete problem (see \cite{inzunza2021displacementpseudostress}).
\end{proof}
The following identity, proved in \cite[Section 4]{MR3918688} for the mixed problem related to the Maxwell's spectral problem, also holds for our equivalent mixed problem for the elasticity spectral formulation
\begin{lemma}
\label{lmm:rodolfo11}
Let $(\kappa,\boldsymbol{\rho},\bu)$ and $(\kappa_{h}, \boldsymbol{\rho}_{h},\bu_{h})$ be  solutions of Problems \eqref{def:spectral_H0} and \ref{def:spectral_1h}, respectively, with $\|\bu\|_{0,\O}=\|\bu_{h}\|_{0,\O}=1$. Then, there holds
\begin{equation*}
\|\mathcal{P}_h^0\bu-\bu_{h}\|_{0,\O}\leq Ch^{s}\left(\|\boldsymbol{\rho}-\boldsymbol{\rho}_h\|_{0,\O}+\|\bu-\bu_h\|_{0,\O}\right),
\end{equation*}
where $s\in (0,\widehat{s}\,]$ and the positive constant $C$ is independent of $h$.
\end{lemma}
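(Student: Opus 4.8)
The plan is to pass through $\widetilde{\bu}_h$ and combine the superconvergence already in hand with a spectral separation estimate. By the triangle inequality,
\[
\|\mathcal{P}_h^0\bu-\bu_h\|_{0,\O}\le\|\mathcal{P}_h^0\bu-\widetilde{\bu}_h\|_{0,\O}+\|\widetilde{\bu}_h-\bu_h\|_{0,\O}.
\]
For the first summand, Lemma \ref{lmm:rodolfo9} followed by Lemma \ref{lmm:rodolfo10} yields
\[
\|\mathcal{P}_h^0\bu-\widetilde{\bu}_h\|_{0,\O}\le Ch^{s}\|\boldsymbol{\rho}-\widetilde{\boldsymbol{\rho}}_h\|_{\bdiv,\O}\le Ch^{s}\bigl(\|\boldsymbol{\rho}-\boldsymbol{\rho}_h\|_{0,\O}+\|\bu-\bu_h\|_{0,\O}\bigr).
\]
Abbreviate $\zeta:=\|\boldsymbol{\rho}-\boldsymbol{\rho}_h\|_{0,\O}+\|\bu-\bu_h\|_{0,\O}$; note $\zeta\le Ch^{s}$ by Lemma \ref{lema:apriorie}. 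It thus remains to control $\|\widetilde{\bu}_h-\bu_h\|_{0,\O}$, and for that I will in fact bound $\|\mathcal{P}_h^0\bu-\bu_h\|_{0,\O}$ directly, which also settles the second summand above.

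Next I would diagonalize the discrete problem. Eliminating $\boldsymbol{\rho}_h$ from \eqref{def:spectral_1h} via its first equation---legitimate by the $\mathbb{V}_h$-ellipticity of $a(\cdot,\cdot)$ and the discrete inf--sup condition---turns \eqref{def:spectral_1h} into a self-adjoint positive-definite eigenproblem on $\mathbf{Q}_h$; hence there is an $\L^2(\O)$-orthonormal basis $\{\bu_{h,i}\}$ of $\mathbf{Q}_h$ of discrete eigenfunctions with eigenvalues $\kappa_{h,i}>0$, and we may take $(\kappa_{h,J},\bu_{h,J})=(\kappa_h,\bu_h)$. Write $\mathcal{P}_h^0\bu=\sum_i\alpha_i\bu_{h,i}$, $\alpha_i=(\bu,\bu_{h,i})_{0,\O}$. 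Reading \eqref{eq:3.3a} as the Galerkin discretization of the source problem \eqref{def:ssource_H0} with datum $\boldsymbol{f}=\kappa\bu$ (so that $\widetilde{\bu}_h$ is the finite element approximation of $\bu$, as noted after \eqref{eq:3.3a}), and observing that the datum enters \eqref{eq:3.3a} only through $(\kappa\bu,\bv_h)_{0,\O}=\kappa\sum_i\alpha_i(\bu_{h,i},\bv_h)_{0,\O}$ while $(\kappa_{h,i}^{-1}\boldsymbol{\rho}_{h,i},\kappa_{h,i}^{-1}\bu_{h,i})$ solves \eqref{eq:3.3a} with datum $\bu_{h,i}$, linearity and uniqueness give
\[
\widetilde{\bu}_h=\sum_i\frac{\kappa}{\kappa_{h,i}}\,\alpha_i\,\bu_{h,i},\qquad\text{hence}\qquad\|\widetilde{\bu}_h-\mathcal{P}_h^0\bu\|_{0,\O}^2=\sum_i\Bigl(\frac{\kappa-\kappa_{h,i}}{\kappa_{h,i}}\Bigr)^2\alpha_i^2.
\]

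The crux is the spectral separation. For $i\ne J$, Proposition \ref{separa_eig} gives $|\kappa-\kappa_{h,i}|\ge\gamma:=\tfrac12\min_{j\ne J}|\kappa_j-\kappa|>0$ once $h$ is small enough, and distinguishing $\kappa_{h,i}\le2\kappa$ from $\kappa_{h,i}>2\kappa$ shows $|(\kappa-\kappa_{h,i})/\kappa_{h,i}|\ge c_0:=\min\{\gamma/(2\kappa),\tfrac12\}>0$, independent of $h$. Together with the displayed identity and the first-paragraph bound $\|\widetilde{\bu}_h-\mathcal{P}_h^0\bu\|_{0,\O}\le Ch^{s}\zeta$ this yields $\sum_{i\ne J}\alpha_i^2\le c_0^{-2}\|\widetilde{\bu}_h-\mathcal{P}_h^0\bu\|_{0,\O}^2\le Ch^{2s}\zeta^2$. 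Finally, normalizing the sign of $\bu_h$ so that $\alpha_J=(\bu,\bu_h)_{0,\O}\in[0,1]$, from $\|\bu\|_{0,\O}=\|\bu_h\|_{0,\O}=1$ one gets $1-\alpha_J=\tfrac12\|\bu-\bu_h\|_{0,\O}^2\le\tfrac12Ch^{s}\zeta$, so by orthonormality
\[
\|\mathcal{P}_h^0\bu-\bu_h\|_{0,\O}^2=\sum_{i\ne J}\alpha_i^2+(1-\alpha_J)^2\le Ch^{2s}\zeta^2,
\]
which with the triangle inequality proves the claim. I expect the spectral-gap step to be the only genuine obstacle: it is what turns the a priori rate $\|\mathcal{P}_h^0\bu-\bu_h\|_{0,\O}=O(h^{s})$ into the superconvergent $O(h^{s}\zeta)$, and it is the one place where simplicity of $\kappa$ and Proposition \ref{separa_eig} enter; the remaining arguments are bookkeeping with Lemmas \ref{lmm:rodolfo9}, \ref{lmm:rodolfo10} and \ref{lema:apriorie}.
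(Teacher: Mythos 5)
Your proof is correct and takes essentially the same route as the paper: the paper's own argument is the triangle inequality through $\widetilde{\bu}_h$ followed by an appeal to \cite[Lemma 11]{MR3918688} and Proposition \ref{separa_eig}, and the spectral decomposition of $\mathcal{P}_h^0\bu$ in the discrete eigenbasis together with the gap bound $|(\kappa-\kappa_{h,i})/\kappa_{h,i}|\ge c_0$ that you spell out is precisely the content of that cited lemma. The only point worth noting is that you (correctly) interpret the right-hand side of \eqref{eq:3.3a} as the discrete source problem with datum $\kappa\bu$, which is the intended reading, since taken literally \eqref{eq:3.3a} would be a discrete eigenproblem rather than the Galerkin approximation of $(\boldsymbol{\rho},\bu)$.
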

\begin{proof}
Let $(\widetilde{\boldsymbol{\rho}}_{h},\widetilde{\bu}_{h})$ be the solution of \eqref{eq:3.3a}. Then, from the triangle inequality we have
\begin{equation*}
\|\mathcal{P}_h^0\bu-\bu_{h}\|_{0,\O}\leq \|\mathcal{P}_h^0\bu-\widetilde{\bu}_{h}\|_{0,\O}+\|\widetilde{\bu}_{h}-\bu_{h}\|_{0,\O}.
\end{equation*} 
Now, adapting the arguments of \cite[Lemma 11]{MR3918688} and  invoking Proposition \ref{separa_eig}, we derive the following estimate
\begin{equation*}
\|\widetilde{\bu}_{h}-\bu_{h}\|_{0,\O}^{2}\leq C\left(\|\widetilde{\bu}_{h}-\mathcal{P}_h^0\bu\|_{0,\O}^{2}+\left[\|\boldsymbol{\rho}-\boldsymbol{\rho}_h\|_{0,\O}^2+\|\bu-\bu_h\|_{0,\O}^2\right]^{2}\right).
\end{equation*}

 Finally, using the above estimates, Lemmas \ref{lmm:rodolfo9} and \ref{lmm:rodolfo10}, we conclude the proof. 
\end{proof}

We define the following finite element subspace 
\begin{equation*}
\mathbf{Y}_h:=\left\{\bv\in [\H^1(\Omega)]^n\,:\,\bv\in \mathbf{P}_1(T), \quad\forall T\in\mathcal{T}_h\right\}.
\end{equation*}
For each vertex $z$ of the elements in $\mathcal{T}_h$, we define the patch
\begin{equation*}
\omega_z:=\bigcup_{z\in T\in\mathcal{T}_h} T.
\end{equation*}

Let us define the postprocessing operator $\Theta_h:\mathbf{Q}\rightarrow \mathbf{Y}_h$. With this operator at hand, and with the previously defined patch $\omega_z$, we fit a piecewise linear function in the average sense, for any $\bv\in\mathbf{Q}$ at the degrees of freedom of element integrations
\begin{equation*}
\displaystyle\Theta_h\bv(z):=\sum_{T\in\omega_z}\frac{\displaystyle\int_T\bv\,dx}{|\omega_z|},
\end{equation*}
where $|\omega_z|$ denotes the measure of the patch.

The operator $\Theta_h$ satisfies the following properties (see \cite[Lemma 3.2, Theorem 3.3]{MR3047040}).
\begin{lemma}[Properties of the postprocessing operator]\label{postprocessing}
The operator $\Theta_h$ defined above satisfies the following:
\begin{enumerate}
\item For $\bu\in \H^{1+s}(\Omega)^n$ with $s$ as in Lemma \ref{lmm:add_eigen} and $T\in\mathcal{T}_h$, there holds $$\|\Theta_h\bu-\bu\|_{0,\O}\leq C h_T^{1+s}\|\bu\|_{1+s,\O},$$
\item $\Theta_h\mathcal{P}_h^0\bv=\Theta_h\bv$,
\item $\|\Theta_h\bv\|_{0,\O}\leq C\|\bv\|_{0,\O}$ for all $\bv\in\mathbf{Q}$,
\end{enumerate}
where the generic constant $C$ is positive and independent of $h$.
\end{lemma}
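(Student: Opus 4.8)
The statement I am asked to prove is Lemma \ref{postprocessing}, the three properties of the averaging operator $\Theta_h:\mathbf{Q}\to\mathbf{Y}_h$. Since the lemma is explicitly attributed to \cite[Lemma 3.2, Theorem 3.3]{MR3047040}, the plan is to verify the three assertions directly from the definition of $\Theta_h$, adapting the known scalar arguments componentwise; I will not reprove the abstract Clément/Scott--Zhang-type machinery from scratch but will indicate which standard estimates are invoked.

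First I would treat property (2), which is purely algebraic. By definition $\Theta_h\bv(z)=\sum_{T\in\omega_z}|\omega_z|^{-1}\int_T\bv\,dx$, and since $\mathcal{P}_h^0\bv$ is the $\L^2$-orthogonal projection onto piecewise constants, $\int_T\mathcal{P}_h^0\bv\,dx=\int_T\bv\,dx$ for every $T$. Substituting $\mathcal{P}_h^0\bv$ for $\bv$ in the defining formula leaves every nodal value unchanged, hence $\Theta_h\mathcal{P}_h^0\bv=\Theta_h\bv$. Next, for property (3) (stability in $\L^2$), I would argue locally: on a fixed element $T$, $\Theta_h\bv|_T$ is the unique $\mathbf{P}_1(T)$ function with the prescribed vertex values, each of which is an average of $\bv$ over a patch $\omega_z$. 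By the Cauchy--Schwarz inequality, $|\Theta_h\bv(z)|\le |\omega_z|^{-1/2}\|\bv\|_{0,\omega_z}$. Passing to the reference element via an affine map and using equivalence of norms on the finite-dimensional space $\mathbf{P}_1(\widehat T)$ together with shape-regularity of $\mathcal{T}_h$ (so that $|\omega_z|\simeq h_T^n$ and the number of elements in each patch is uniformly bounded), one gets $\|\Theta_h\bv\|_{0,T}^2\le C\sum_{z\in T}\|\bv\|_{0,\omega_z}^2$. Summing over $T$ and using finite overlap of the patches yields $\|\Theta_h\bv\|_{0,\O}\le C\|\bv\|_{0,\O}$ with $C$ depending only on the shape-regularity constant.

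For property (1) (the $h_T^{1+s}$ approximation estimate), I would combine (3) with an invariance property: $\Theta_h$ reproduces globally continuous piecewise linear functions, in particular it reproduces constants and, more importantly, $\Theta_h$ applied to any $\bw\in\mathbf{Y}_h\cap\mathbf{P}_1$ that is already affine on the patch returns that function; at the very least $\Theta_h$ preserves $\mathbf{P}_1(\O)$ globally affine functions. Writing $\bu-\Theta_h\bu=(\bu-\bw)-\Theta_h(\bu-\bw)$ for a suitable affine (or best piecewise-linear) approximant $\bw$, applying the local stability bound from (3) on the patch, and invoking the Bramble--Hilbert lemma on $\omega_z$ to estimate $\|\bu-\bw\|_{0,\omega_z}\le C h_T^{1+s}\|\bu\|_{1+s,\omega_z}$ for $\bu\in\H^{1+s}(\O)^n$, one obtains $\|\bu-\Theta_h\bu\|_{0,T}\le C h_T^{1+s}\|\bu\|_{1+s,\omega_z}$; squaring and summing over $T$ with finite patch overlap gives the stated global bound. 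All constants depend only on shape-regularity and on $s\in(0,\ws]$, hence are independent of $h$.

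The main obstacle is the fractional-order Bramble--Hilbert step in property (1): the standard polynomial approximation estimate on a patch is usually stated for integer smoothness, and here $1+s$ with $s\in(0,\ws]$, $\ws\le 1$, is genuinely fractional. I would handle this by working on the reference patch configuration, using the continuous embedding $\H^{1+s}(\widehat\omega)\hookrightarrow\mathcal{C}(\widehat\omega)$ (valid for $1+s>n/2$ when $n=2,3$ provided $s$ is not too small — here one uses that the nodal averages are bona fide $\L^2$ functionals, so no pointwise evaluation is actually needed and the embedding issue is sidestepped), and then transporting the estimate back by the affine map, picking up the factor $h_T^{1+s}$ from the scaling of the $\H^{1+s}$ seminorm. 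Since the functionals defining $\Theta_h$ are integral averages rather than point evaluations, the argument in fact only requires $\bu\in\H^{1+s}$ with $s>0$, and the fractional scaling is routine once one fixes a finite catalogue of reference patch shapes; this is exactly the content of \cite[Lemma 3.2, Theorem 3.3]{MR3047040}, to which I would refer for the remaining technical details.
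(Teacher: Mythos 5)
Your treatments of properties (2) and (3) are correct and complete; in fact they go beyond the paper, which offers no proof at all and simply points to \cite[Lemma 3.2, Theorem 3.3]{MR3047040}. The identity $\int_T\mathcal{P}_h^0\bv=\int_T\bv$ does give (2) immediately, and the patch-counting/finite-overlap argument for (3) is the standard one and is sound.

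The gap is in property (1), and it sits exactly at the step you yourself flagged as the crux: the asserted invariance ``$\Theta_h$ preserves globally affine functions'' is false for the operator as defined here. The nodal value $\Theta_h\bv(z)=|\omega_z|^{-1}\sum_{T\in\omega_z}\int_T\bv$ is the mean of $\bv$ over the patch, i.e.\ the $\L^2(\omega_z)$-projection onto \emph{constants}; for an affine $\bw(x)=a\cdot x+b$ it returns $\bw(\bar x_{\omega_z})$, with $\bar x_{\omega_z}$ the centroid of $\omega_z$, and $\bw(\bar x_{\omega_z})-\bw(z)=a\cdot(\bar x_{\omega_z}-z)=O(h_T)\,|a|$ unless every patch is centrally symmetric about its vertex. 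Consequently, in the decomposition $\bu-\Theta_h\bu=(\bu-\bw)-\Theta_h(\bu-\bw)+(\bw-\Theta_h\bw)$ the last term does not vanish and contributes $O(h_T)\|\nabla\bu\|$, so the Bramble--Hilbert step caps the rate at $O(h_T)$ rather than the claimed $O(h_T^{1+s})$: only reproduction of constants survives, which cannot yield a superconvergent estimate. To close the gap you must either define $\Theta_h\bv(z)$ as the value at $z$ of the $\L^2(\omega_z)$-projection of $\bv$ onto $\mathbf{P}_1(\omega_z)$ (which does reproduce affine functions and is what the cited construction actually requires), or impose a symmetry/structure condition on the patches. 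Your remarks on the fractional-order Bramble--Hilbert scaling and on sidestepping the Sobolev embedding (because the degrees of freedom are integral averages) are fine; they are not where the difficulty lies.
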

The following result, proved in  \cite[Theorem 3.3]{ MR3047040} states  a superconvergence property for $\Theta_h$.

\begin{lemma}[superconvergence]
\label{lmm:super}
For $h$ small enough, there exists a positive constant $C$, independent of $h$ and $\lambda$, such that
\begin{equation*}
\|\Theta_h\bu_h-\bu\|_{0,\O}\leq C h^{s}\left(\|\boldsymbol{\rho}-\boldsymbol{\rho}_h\|_{0,\O}+\|\bu-\bu_h\|_{0,\O}\right)+\|\Theta_h\bu-\bu\|_{0,\O}.
\end{equation*}
\end{lemma}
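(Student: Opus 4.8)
The plan is to bound $\|\Theta_h\bu_h-\bu\|_{0,\O}$ by inserting the intermediate quantities $\Theta_h\mathcal{P}_h^0\bu$ and $\Theta_h\bu$, so that
\[
\|\Theta_h\bu_h-\bu\|_{0,\O}\leq \|\Theta_h\bu_h-\Theta_h\mathcal{P}_h^0\bu\|_{0,\O}+\|\Theta_h\mathcal{P}_h^0\bu-\Theta_h\bu\|_{0,\O}+\|\Theta_h\bu-\bu\|_{0,\O}.
\]
The second term vanishes identically: by property (2) of Lemma \ref{postprocessing}, $\Theta_h\mathcal{P}_h^0\bu=\Theta_h\bu$. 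The last term is exactly the final term appearing in the statement, so it is kept as is. It remains to control the first term.

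For the first term I would use the boundedness of $\Theta_h$ (property (3) of Lemma \ref{postprocessing}) to write
\[
\|\Theta_h\bu_h-\Theta_h\mathcal{P}_h^0\bu\|_{0,\O}=\|\Theta_h(\bu_h-\mathcal{P}_h^0\bu)\|_{0,\O}\leq C\|\bu_h-\mathcal{P}_h^0\bu\|_{0,\O},
\]
where I have used linearity of $\Theta_h$ (evident from its definition, since it is built from integral averages). Then Lemma \ref{lmm:rodolfo11} provides exactly the bound
\[
\|\mathcal{P}_h^0\bu-\bu_h\|_{0,\O}\leq Ch^{s}\left(\|\boldsymbol{\rho}-\boldsymbol{\rho}_h\|_{0,\O}+\|\bu-\bu_h\|_{0,\O}\right),
\]
with $s\in(0,\widehat{s}\,]$ and $C$ independent of $h$. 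Substituting this in and collecting the constants yields the claimed estimate. The hypothesis ``$h$ small enough'' is inherited from Lemma \ref{lmm:rodolfo11} (which relies on Proposition \ref{separa_eig}), and the $\lambda$-independence follows because every constant invoked — in Lemmas \ref{lmm:rodolfo9}, \ref{lmm:rodolfo10}, \ref{lmm:rodolfo11} and in Lemma \ref{postprocessing} — has already been certified independent of $\lambda$.

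There is essentially no serious obstacle here: the proof is a three-term triangle inequality where one term dies by the projection identity, one is kept verbatim, and one is handed off to the already-established Lemma \ref{lmm:rodolfo11} after using boundedness of the postprocessing operator. The only point requiring a moment's care is making sure that $\Theta_h$ is applied to $\bu_h-\mathcal{P}_h^0\bu\in\mathbf{Q}$ (both $\bu_h$ and $\mathcal{P}_h^0\bu$ lie in $\mathbf{Q}_h\subset\mathbf{Q}$, so this is legitimate) and that property (2) is being used with the exact eigenfunction $\bu$ rather than a discrete object — but $\bu\in\mathbf{Q}$ as well, so property (2) applies. Hence the estimate follows directly.
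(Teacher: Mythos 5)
Your proposal is correct and follows essentially the same route as the paper's own proof: the same three-term triangle inequality through $\Theta_h\mathcal{P}_h^0\bu$ and $\Theta_h\bu$, with the middle term eliminated via property (2) of Lemma \ref{postprocessing}, the first term reduced by boundedness of $\Theta_h$ and then handled by Lemma \ref{lmm:rodolfo11}. No differences worth noting.
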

\begin{proof}
From Lemma \ref{lmm:rodolfo11}  and the properties presented in Lemma \ref{postprocessing} it follows that 
\begin{align*}
\|\Theta_h\bu_h-\bu\|_{0,\O}&\leq C \|\Theta_h\bu_h-\Theta_h \mathcal{P}_h^0\bu\|_{0,\O}+\|\Theta_h \mathcal{P}_h^0\bu-\Theta_h \bu\|_{0,\O}+\|\Theta_h\bu-\bu\|_{0,\O} \\
&\leq C \|\bu_h- \mathcal{P}_h^0\bu\|_{0,\O}+\|\Theta_h\bu-\bu\|_{0,\O}\\
&\leq C h^{s}\left(\|\boldsymbol{\rho}-\boldsymbol{\rho}_h\|_{0,\O}+\|\bu-\bu_h\|_{0,\O}\right)+\|\Theta_h\bu-\bu\|_{0,\O}.
\end{align*}
This concludes the proof.
\end{proof}

\section{A posteriori error analysis}
\label{sec:apost}
The following section is dedicated to the design and analysis of  an a posteriori error estimator for our 
mixed eigenvalue problem. 
The main difficulty in the a posteriori error analysis for eigenvalue problems is to control the so called high order terms. To do this task, we adapt the results of \cite{MR3047040} in order to obtain a superconvergence result and hence, prove the desire estimates for our estimator.

\subsection{Properties of the mesh}
For $T\in\mathcal{T}_h$, let $\mathcal{E}(T)$ be the set of its edges/faces, and let $\mathcal{E}_h$ be the set of all
the faces/edges of the triangulation $\mathcal{T}_h$. With these definitions at hand, we write $\mathcal{E}_h=\mathcal{E}_h(\O)\cup\mathcal{E}_h(\partial\O)$, where
\begin{equation*}
\mathcal{E}_h(\O):=\{e\in\mathcal{E}_h\,:\,e\subseteq\O\}\quad\text{and}\quad\mathcal{E}_h(\partial\O):=\{e\in\mathcal{E}_h\,:\, e\subseteq\partial\O\}.
\end{equation*}

On the other hand, for each face/edge $e\in\mathcal{E}_h$ we fix a unit normal vector $\bn_e$ to $e$. Moreover, given $\btau\in\mathbb{H}(\curl, \Omega)$ and $e\in\mathcal{E}_h(\O)$, we let $\jumpp{\btau\times\bn_e}$ be the corresponding jump of the tangential traces across $e$, that is
\begin{equation*}
\jumpp{\btau\times\bn_e}:=(\btau|_T-\btau|_{T'})\big|_e\times\bn_e,
\end{equation*} 
where $T$ and $T'$ are two elements of the triangulation with common edge $e$.
\subsection{Definitions and technical results}
\label{sub:bubbles}
We begin by introducing the bubble functions for two dimensional elements. Given $T\in\mathcal{T}_h$ and $e\in\mathcal{E}(T)$, we let $\psi_T$ and $\psi_e$ be the usual triangle-bubble and edge-bubble functions, respectively (see \cite{MR3059294} for further details about these functions), which satisfy the following properties\\

\begin{enumerate}
\item $\psi_T\in\mathrm{P}_{\ell}(T)$, with $\ell=3$ for 2D or $\ell=4$ for 3D, $\text{supp}(\psi_T)\subset T$, $\psi_T=0$ on $\partial T$ and $0\leq\psi_T\leq 1$ in $T$;
\item $\psi_e|_T\in\mathrm{P}_{\ell}(T)$,  with $\ell=2$ for 2D or $\ell=3$ for 3D, $\text{supp}(\psi_e)\subset \omega_e:=\cup\{T'\in\mathcal{T}_h\,:\, e\in\mathcal{E}(T')\}$, $\psi_e=0$ on $\partial T\setminus e$ and $0\leq\psi_e\leq 1$ in $\omega_e$.
\end{enumerate}

The following properties, proved in \cite[Lemma 1.3]{MR1284252} for an arbitrary polynomial order of approximation, hold.
\begin{lemma}[Bubble function properties]
\label{lmm:bubble_estimates}
Given $\ell\in\mathbb{N}\cup\{0\}$, and for each $T\in\mathcal{T}_h$ and $e\in\mathcal{E}(T)$, there hold
\begin{equation*}
\|\psi_T q\|_{0,T}^2\leq \|q\|_{0,T}^2\leq C \|\psi_T^{1/2} q\|_{0,T}^2\quad\forall q\in\mathrm{P}_{\ell}(T),
\end{equation*}
\begin{equation*}
\|\psi_e L(p)\|_{0,e}^2\leq \| p\|_{0,e}^2\leq C \|\psi_e^{1/2} p\|_{0,e}^2\quad\forall p\in\mathrm{P}_{\ell}(e),
\end{equation*}
and 
\begin{equation*}
h_e\|p \|_{0,e}^2\leq C \|\psi_e^{1/2} L(p)\|_{0,T}^2\leq C
 h_e\|p\|_{0,e}^2\quad\forall p\in\mathrm{P}_{\ell}(e),
 \end{equation*}
 where $L: C(e)\rightarrow C(T)$ with $L(p)\in\mathrm{P}_k(T)$ and $L(p)|_e=p$ for all $p\in\mathrm{P}_k(e)$, and  the hidden constants depend on $k$ and the shape regularity of the triangulation.
 \end{lemma}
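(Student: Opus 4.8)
The statement is classical and the plan is to prove each of the three chains of inequalities by reducing to a reference element via an affine transformation, where everything becomes an equivalence of norms on a fixed finite-dimensional space, and then scaling back. First I would fix the affine map $F_T:\widehat T\to T$ from the reference simplex $\widehat T$, with $\widehat\psi_{\widehat T}$ the reference bubble; on $T$ the bubble is $\psi_T=\widehat\psi_{\widehat T}\circ F_T^{-1}$, and similarly for edges/faces with $F_e$. The key point is that on $\widehat T$ the three quantities $q\mapsto\|\widehat\psi_{\widehat T}^{1/2}q\|_{0,\widehat T}$, $q\mapsto\|\widehat\psi_{\widehat T}q\|_{0,\widehat T}$ and $q\mapsto\|q\|_{0,\widehat T}$ are all norms on the finite-dimensional space $\mathrm{P}_\ell(\widehat T)$ — here one uses $0<\widehat\psi_{\widehat T}$ in the interior of $\widehat T$ so that $\|\widehat\psi_{\widehat T}^{1/2}q\|_{0,\widehat T}=0$ forces $q\equiv 0$ — hence they are mutually equivalent with constants depending only on $\ell$ and $\widehat T$.

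For the first chain, the left inequality $\|\psi_Tq\|_{0,T}\le\|q\|_{0,T}$ is immediate from $0\le\psi_T\le 1$ pointwise and needs no scaling. For the right inequality $\|q\|_{0,T}^2\le C\|\psi_T^{1/2}q\|_{0,T}^2$, I would change variables to $\widehat T$: $\|q\|_{0,T}^2=|\det DF_T|\,\|\widehat q\|_{0,\widehat T}^2\le C_\ell|\det DF_T|\,\|\widehat\psi_{\widehat T}^{1/2}\widehat q\|_{0,\widehat T}^2=C_\ell\|\psi_T^{1/2}q\|_{0,T}^2$, where $\widehat q=q\circ F_T\in\mathrm{P}_\ell(\widehat T)$ and the middle step is the reference norm equivalence. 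The Jacobian cancels exactly because no derivatives are involved, so no powers of $h_T$ appear — consistent with the scale-invariant form of the stated inequality. The edge-bubble chain $\|\psi_eL(p)\|_{0,e}\le\|p\|_{0,e}\le C\|\psi_e^{1/2}p\|_{0,e}$ is handled identically, working on the reference face $\widehat e$ with $L(p)|_e=p$ and $0\le\psi_e\le 1$ on $e$, again with the Jacobian of the affine face map cancelling.

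The third chain, $h_e\|p\|_{0,e}^2\le C\|\psi_e^{1/2}L(p)\|_{0,T}^2\le C\,h_e\|p\|_{0,e}^2$, is the one requiring a genuine scaling argument since it relates a volume norm on $T$ to a surface norm on $e$, which accounts for the single power of $h_e$ (using shape-regularity, $h_e\sim h_T$ and $|T|\sim h_T^{n}$, $|e|\sim h_T^{n-1}$, so $|T|/|e|\sim h_e$). On the reference configuration one has the fixed equivalence $\|\widehat p\|_{0,\widehat e}^2\sim\|\widehat\psi_{\widehat e}^{1/2}\widehat L(\widehat p)\|_{0,\widehat T}^2$ for $\widehat p\in\mathrm{P}_\ell(\widehat e)$ — again a norm equivalence on a finite-dimensional space, valid because $\widehat L$ is an injective extension and $\widehat\psi_{\widehat e}>0$ on the relative interior of $\widehat e$. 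Transforming back, the volume-norm change of variables brings $|\det DF_T|\sim h_T^{n}$ and the surface-norm change of variables brings $|\det DF_e|\sim h_T^{n-1}$, whose ratio is $\sim h_e$, giving the claimed estimate. I expect the main (and really only) subtlety to be bookkeeping the Jacobian factors so that exactly one power of $h_e$ emerges, together with noting where shape-regularity enters ($h_T\sim h_e$ and the uniform bounds on $\det DF_T$, $\det DF_e$ relative to $h_T^{n}$, $h_T^{n-1}$); the choice of the extension operator $L$ is standard (e.g.\ constant extension in the coordinate normal to $e$ on the reference element, composed back), and once $\widehat L$ is fixed the reference equivalences are purely finite-dimensional linear algebra. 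No other obstacles arise, and the hidden constants depend only on $k$ and the shape-regularity constant, as stated.
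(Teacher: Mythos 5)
The paper does not prove this lemma itself; it simply cites \cite[Lemma~1.3]{MR1284252}, where the standard proof is exactly the one you outline: norm equivalence on the finite-dimensional polynomial space over the reference element (using strict positivity of the bubble in the interior), transported by the affine maps, with the single factor $h_e$ in the third chain arising from the ratio of the volume and surface Jacobians under shape regularity. Your argument is correct and coincides with the approach of the cited reference, so there is nothing to add.
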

Also, we requiere the following technical result (see \cite[Theorem 3.2.6]{MR0520174}).

\begin{lemma}[Inverse inequality]\label{inversein}
Let $l,m\in\mathbb{N}\cup\{0\}$ such that $l\leq m$. Then, for each $T\in\mathcal{T}_h$ there holds
\begin{equation*}
|q|_{m,T}\leq C h_T^{l-m}|q|_{l,T}\quad\forall q\in\mathrm{P}_k(T),
\end{equation*}
where the hidden constant depends on $k,l,m$ and the shape regularity of the triangulations.
\end{lemma}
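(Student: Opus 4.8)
The plan is the classical scaling argument to a fixed reference element, where the estimate reduces to the equivalence of norms on a finite-dimensional space.

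Fix a reference simplex $\widehat{T}$ and, for each $T\in\mathcal{T}_h$, let $F_T:\widehat{T}\to T$, $F_T(\widehat{\bx})=B_T\widehat{\bx}+\boldsymbol{b}_T$, be the affine bijection carrying $\widehat{T}$ onto $T$. Given $q\in\mathrm{P}_k(T)$, set $\widehat{q}:=q\circ F_T\in\mathrm{P}_k(\widehat{T})$. The standard change-of-variables bounds for Sobolev seminorms under affine maps give constants depending only on $n$, $l$ and $m$ such that
\[
|q|_{m,T}\le C\,\|B_T^{-1}\|^{m}\,|\det B_T|^{1/2}\,|\widehat{q}|_{m,\widehat{T}},\qquad |\widehat{q}|_{l,\widehat{T}}\le C\,\|B_T\|^{l}\,|\det B_T|^{-1/2}\,|q|_{l,T}.
\]
Next, on the reference element we claim that there is a constant $C_{\widehat{T}}=C_{\widehat{T}}(k,l,m)$ with $|\widehat{q}|_{m,\widehat{T}}\le C_{\widehat{T}}\,|\widehat{q}|_{l,\widehat{T}}$ for all $\widehat{q}\in\mathrm{P}_k(\widehat{T})$. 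Indeed, since $l\le m$, every derivative of order $\ge l$ annihilates $\mathrm{P}_{l-1}(\widehat{T})$, so both $|\cdot|_{l,\widehat{T}}$ and $|\cdot|_{m,\widehat{T}}$ are well defined on the finite-dimensional quotient space $\mathrm{P}_k(\widehat{T})/\mathrm{P}_{l-1}(\widehat{T})$, on which $|\cdot|_{l,\widehat{T}}$ is a genuine norm; as $|\cdot|_{m,\widehat{T}}$ is a seminorm on this finite-dimensional normed space, it is bounded by a constant multiple of the norm, which is the claim.

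Finally, by the shape regularity of $\mathcal{T}_h$ one has $\|B_T\|\le C h_T$ and $\|B_T^{-1}\|\le C h_T^{-1}$, with $C$ depending on $\widehat{T}$ and the shape-regularity parameter. Chaining the three estimates (the factors $|\det B_T|^{\pm 1/2}$ cancel) yields
\[
|q|_{m,T}\le C\,\|B_T^{-1}\|^{m}|\det B_T|^{1/2}|\widehat{q}|_{m,\widehat{T}}\le C\,\|B_T^{-1}\|^{m}|\det B_T|^{1/2}|\widehat{q}|_{l,\widehat{T}}\le C\,\|B_T^{-1}\|^{m}\|B_T\|^{l}|q|_{l,T}\le C\,h_T^{l-m}|q|_{l,T},
\]
with $C$ depending only on $k$, $l$, $m$ and the shape regularity, as asserted. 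The only genuinely non-mechanical point is the reference-element estimate, i.e.\ checking that $|\cdot|_{l,\widehat{T}}$ is a norm on the quotient and that $|\cdot|_{m,\widehat{T}}$ descends to it as a bounded seminorm; the rest is bookkeeping with affine scaling and the shape-regularity bounds.
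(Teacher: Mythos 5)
Your proof is correct and is exactly the classical scaling argument (affine map to a reference simplex, norm equivalence on the finite-dimensional quotient $\mathrm{P}_k(\widehat{T})/\mathrm{P}_{l-1}(\widehat{T})$, and shape-regularity bounds on $\|B_T\|$ and $\|B_T^{-1}\|$). The paper does not prove this lemma at all --- it simply cites Theorem 3.2.6 of Ciarlet's book --- and your argument is precisely the standard proof given in that reference, so there is nothing to reconcile.
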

Let  $I_{h}:\H^{1}(\O)\rightarrow C_I$, where $
	C_I:=\{v\in \mathcal{C}(\bar{\Omega}): v|_{T}\in P_{1}(T) \;\ \forall T\in\CT_{h}\}
	$, be the Cl\'ement interpolant of degree $k=1$. We also define $\boldsymbol{I}_h:[\H^1(\Omega)]^n\rightarrow [C_I]^n=\mathbf{Y}_h $ as the corresponding vectorial version of $I_h$.

The following auxiliary results, available in \cite{MR3453481}, are necessary in our forthcoming analysis.

The following
lemma establishes the local approximation properties of $I_{h}$.
\begin{lemma}
\label{I:clemont}
There exist constants $c_{1}$, $c_{2}>0$, independent of $h$, such that for all $v\in H^{1}(\O)$ there holds
\begin{equation*}
\|v-I_{h}v\|_{0,T}\leq c_1 h_{T}\|v\|_{1,\omega_{T}}\quad \forall T\in\CT_{h},
\end{equation*}
and
\begin{equation*}
\|v-I_{h}v\|_{0,e}\leq c_{2}h_{e}^{1/2}\|v\|_{1,\omega_{e}}\quad \forall e\in \mathcal{E}_h,
\end{equation*}
where $\omega_{T}:=\{T'\in\CT_{h}: T' \text { and } T \text{ share an edge}\}$ and $\omega_{e}:=\{T'\in\CT_{h}: e\in \mathcal{E}_{T'}\}$.
\end{lemma}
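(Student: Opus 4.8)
The statement to prove is the Clément interpolation estimate (Lemma \ref{I:clemont}), which gives the standard local approximation properties of the Clément interpolant $I_h$ in $L^2$ on elements and on edges/faces. This is a classical result, so the proof is essentially a citation-and-scaling argument.

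The plan is to invoke the standard theory of quasi-interpolation operators of Clément type. First I would recall the definition of $I_h$: for $v\in H^1(\O)$, one sets $I_h v := \sum_z \big(\pi_z v\big)(z)\,\varphi_z$, where $\varphi_z$ is the nodal hat basis function associated with vertex $z$ of $\CT_h$, and $\pi_z v$ is the $L^2(\omega_z)$-orthogonal projection of $v$ onto $P_1(\omega_z)$ (or onto constants, depending on the variant); here $\omega_z$ is the patch of elements meeting $z$. The key locality property is that $I_h v|_T$ depends only on $v|_{\omega_T}$, since $T$ has finitely many vertices and each $\omega_z$ for $z$ a vertex of $T$ is contained in $\omega_T$.

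The core steps are: (i) a Bramble--Hilbert / Deny--Lions argument on each patch $\omega_z$ showing $\|v-\pi_z v\|_{0,\omega_z}\le C h_{\omega_z}|v|_{1,\omega_z}$, plus an $H^1$-stability bound $\|\pi_z v\|_{1,\omega_z}\le C\|v\|_{1,\omega_z}$; (ii) on a fixed element $T$, write $v - I_h v = \sum_{z\in T}(v-\pi_z v)(z)\varphi_z$ up to the partition-of-unity identity $\sum_z\varphi_z\equiv 1$, so that $\|v-I_hv\|_{0,T}$ is controlled by $\sum_{z\in T}\|v-\pi_z v\|_{0,\omega_z}$ after using $\|\varphi_z\|_{0,T}\le C|T|^{1/2}$ and local inverse/trace-type bounds to pass from pointwise vertex values to $L^2$ norms over the patch; (iii) scaling: use shape-regularity so that $h_{\omega_T}\simeq h_T$ and all patches $\omega_z$ with $z\in T$ satisfy $h_{\omega_z}\le C h_T$, yielding $\|v-I_hv\|_{0,T}\le c_1 h_T\|v\|_{1,\omega_T}$. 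For the edge/face estimate, I would combine the element estimate with a scaled trace inequality $\|w\|_{0,e}^2\le C\big(h_e^{-1}\|w\|_{0,T}^2+h_e|w|_{1,T}^2\big)$ applied to $w=v-I_hv$, together with the analogous $H^1$-seminorm estimate $|v-I_hv|_{1,T}\le C\|v\|_{1,\omega_T}$, to obtain $\|v-I_hv\|_{0,e}\le c_2 h_e^{1/2}\|v\|_{1,\omega_e}$. Since the result is standard, I would simply cite \cite{MR3453481} (and the original Clément paper) for all of this and not reproduce the computations.

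There is no real obstacle here: the only point requiring a little care is the bookkeeping of the patches — ensuring $\omega_z\subset\omega_T$ for every vertex $z$ of $T$, and $\omega_e$ appearing (rather than $\omega_T$) in the face estimate because only the two elements sharing $e$, and the patches of their vertices on $e$, enter — but this is routine and covered by the cited references. Accordingly, the proof reduces to the following.

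\begin{proof}
These are the classical local approximation properties of the Clément quasi-interpolation operator; see \cite[Lemma 3.5 and the surrounding discussion]{MR3453481} and the references therein. We sketch the argument for completeness. By definition $I_h v=\sum_z(\pi_z v)(z)\,\varphi_z$, where $\varphi_z$ is the piecewise linear nodal basis function at the vertex $z$ and $\pi_z v\in\mathrm{P}_1(\omega_z)$ is an $L^2(\omega_z)$-projection of $v$. A Deny--Lions/Bramble--Hilbert argument on each patch gives $\|v-\pi_z v\|_{0,\omega_z}\le C h_{\omega_z}\|v\|_{1,\omega_z}$ together with the stability bound $\|\pi_z v\|_{1,\omega_z}\le C\|v\|_{1,\omega_z}$, with constants depending only on the shape regularity of $\CT_h$.

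Fix $T\in\CT_h$. Using the partition of unity $\sum_z\varphi_z\equiv1$ on $\bar\O$ and the locality of $I_h$, on $T$ we may write $v-I_h v=\sum_{z\in T}\big(v-(\pi_z v)(z)\big)\varphi_z$; estimating $\|\varphi_z\|_{0,T}\le C|T|^{1/2}$ and passing from the nodal values to the $L^2$-norm over $\omega_z$ by the standard local (inverse and trace) inequalities, we obtain
\begin{equation*}
\|v-I_h v\|_{0,T}\le C\sum_{z\in T}\|v-\pi_z v\|_{0,\omega_z}\le C\sum_{z\in T}h_{\omega_z}\|v\|_{1,\omega_z}.
\end{equation*}
Since $h_{\omega_z}\le C h_T$ and $\omega_z\subset\omega_T$ for every vertex $z$ of $T$, this yields $\|v-I_h v\|_{0,T}\le c_1 h_T\|v\|_{1,\omega_T}$. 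The same reasoning, now also controlling the gradient, gives $|v-I_h v|_{1,T}\le C\|v\|_{1,\omega_T}$.

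For the edge/face estimate, let $e\in\mathcal{E}_h$ and let $T$ be an element with $e\in\mathcal{E}(T)$. The scaled trace inequality
\begin{equation*}
\|w\|_{0,e}^2\le C\big(h_e^{-1}\|w\|_{0,T}^2+h_e\,|w|_{1,T}^2\big)
\end{equation*}
applied to $w=v-I_h v$, combined with the two bounds just obtained and $h_e\le C h_T$, gives $\|v-I_h v\|_{0,e}\le c_2 h_e^{1/2}\|v\|_{1,\omega_e}$, where $\omega_e$ collects the (at most two) elements sharing $e$. This completes the proof.
\end{proof}
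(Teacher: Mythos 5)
Your proposal is correct and matches the paper's treatment: the paper gives no proof of this lemma at all, simply citing \cite{MR3453481} for these classical Cl\'ement estimates, which is exactly your primary move, and your added sketch is the standard partition-of-unity/Bramble--Hilbert/scaled-trace argument. The only cosmetic mismatch is that your trace step naturally produces $\|v\|_{1,\omega_T}$ rather than $\|v\|_{1,\omega_e}$ on the right-hand side of the edge estimate, but this patch-bookkeeping imprecision is already present in the statement as given in the paper and its cited reference, and it is harmless for all downstream uses.
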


The following Helmoltz decomposition holds (see \cite[Lemma 4.3]{MR3453481}).

\begin{lemma}
\label{lm:helm}
For each $\btau\in \mathbb{H}(\bdiv,\O)$ there exist $\boldsymbol{z}\in [\H^{2}(\O)]^{n}$ and $\boldsymbol{\chi}\in \mathbb{H}^1(\Omega)$ such that
\begin{equation*}
\btau=\nabla \boldsymbol{z}+\mathbf{curl}{\boldsymbol{\chi}}\quad \text{ in }\O\quad \text{ and }\quad \|\boldsymbol{z}\|_{2,\O}+\|\boldsymbol{\chi}\|_{1,\O}\leq C\|\btau\|_{\bdiv,\O},
\end{equation*}
where $C$ is a positive  constant independent of all the foregoing variables.
\end{lemma}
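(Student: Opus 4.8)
The plan is to construct $\boldsymbol{z}$ first by solving a vector Poisson problem and then to show that the remainder is a (rotational) divergence-free field that can be written as $\curl\boldsymbol{\chi}$ via a potential argument. Concretely, given $\btau\in\mathbb{H}(\bdiv,\O)$, I would first extend the data: consider $\bdiv\btau\in\L^2(\O)^n$ and solve, row-wise, the boundary value problem $\Delta\boldsymbol{z}=\bdiv\btau$ in a fixed convex (or smooth) bounded domain $B$ with $\O\subset\subset B$, after extending $\bdiv\btau$ by zero to $B$ — or, more simply, solve $\Delta\boldsymbol{z}=\bdiv\btau$ in $\O$ with a homogeneous Neumann-type condition adapted to the geometry. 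Elliptic regularity on a convex domain (or on the smooth auxiliary domain) then gives $\boldsymbol{z}\in[\H^2(\O)]^n$ together with the a priori bound $\|\boldsymbol{z}\|_{2,\O}\le C\|\bdiv\btau\|_{0,\O}\le C\|\btau\|_{\bdiv,\O}$.

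Next I would set $\boldsymbol{\eta}:=\btau-\nabla\boldsymbol{z}\in\mathbb{L}^2(\O)$ and observe that, by construction, $\bdiv\boldsymbol{\eta}=\bdiv\btau-\Delta\boldsymbol{z}=\boldsymbol{0}$ in $\O$; hence $\boldsymbol{\eta}\in\mathbb{H}(\bdiv,\O)$ with vanishing divergence. The classical result (exactness of the de Rham complex on a domain that is, say, simply connected, or more generally the characterization of divergence-free $\L^2$ fields) yields a potential $\boldsymbol{\chi}$ with $\boldsymbol{\eta}=\curl\boldsymbol{\chi}$; on a Lipschitz domain one obtains $\boldsymbol{\chi}\in\mathbb{H}^1(\O)$, and the corresponding stability estimate $\|\boldsymbol{\chi}\|_{1,\O}\le C\|\boldsymbol{\eta}\|_{0,\O}$ (after fixing the gauge, e.g. requiring $\boldsymbol{\chi}$ to have zero mean or to lie in a suitable complement of $\nabla$-fields). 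Combining, $\|\boldsymbol{\chi}\|_{1,\O}\le C\|\boldsymbol{\eta}\|_{0,\O}\le C(\|\btau\|_{0,\O}+\|\nabla\boldsymbol{z}\|_{0,\O})\le C\|\btau\|_{\bdiv,\O}$, which together with the bound on $\boldsymbol{z}$ gives the asserted inequality.

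The delicate point — and the step I expect to be the main obstacle — is obtaining $\boldsymbol{z}\in[\H^2(\O)]^n$ (not merely $\H^1$) with a constant independent of $\btau$: this requires $\H^2$ elliptic regularity, which is why one typically works on a convex auxiliary domain $B\supset\overline{\O}$, solves there, and restricts. One must be careful that the extension of $\bdiv\btau$ and the choice of boundary condition are compatible so that the solvability condition (zero mean of the right-hand side, if Neumann data are used) holds; a standard fix is to subtract a constant or to use the whole space $\R^n$ with a decaying fundamental-solution convolution and then cut off. The second potential subtlety is the regularity of $\boldsymbol{\chi}$: in three dimensions $\curl$ acts on vector fields and the potential lemma must be invoked in the form that produces an $\mathbb{H}^1$ matrix field whose rows are curls of $\H^1$ vector fields; this is available in the literature (e.g. Girault–Raviart type results) on Lipschitz, simply connected domains, and I would cite it rather than reprove it. Modulo these two classical ingredients — $\H^2$ regularity for the Laplacian on a convex set and the existence of $\H^1$ vector potentials for divergence-free $\L^2$ fields — the decomposition and its stability bound follow by the assembly above.
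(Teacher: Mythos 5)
Your argument is essentially the standard proof of this decomposition, and it coincides with the one in the reference the paper actually relies on: the paper does not prove Lemma \ref{lm:helm} itself but cites \cite[Lemma 4.3]{MR3453481}, whose proof is exactly your construction (solve a Dirichlet problem for the Laplacian on a convex auxiliary domain $B\supset\overline{\O}$ with $\div\btau$ extended by zero, restrict to get $\boldsymbol{z}\in[\H^2(\O)]^n$, and apply the Girault--Raviart vector potential theorem to the divergence-free remainder). You correctly flag the two delicate points ($\H^2$ regularity forcing the convex auxiliary domain rather than $\O$ itself, and the gauge/topology hypotheses for the potential), so there is nothing to object to.
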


\subsection{The local and global error indicators}
 In what follows, let $(\l_h,\boldsymbol{\rho}_h, \bu_h)\in\R\times\mathbb{H}_{h,0}\times \mathbf{Q}_h$ be the solution of \eqref{def:spectral_1h}. Now, for each $T\in\mathcal{T}_h$ we define the local error indicator $\eta_T$ as follows

\begin{multline}
\label{eq:local_eta}
\eta_T^2:=\|\Theta_h\bu_h-\bu_h\|_{0,T}^2+h_T^2\left\|\nabla \bu_h-\frac{1}{\mu}\left\{ \rho_h-\frac{\lambda+\mu}{n\lambda+(n+1)\mu}\tr(\rho_h)\mathbb{I} \right\} \right\|^2_{0,T}\\
+h_T^2\left\|\curl\left(\frac{1}{\mu}\left\{\rho_h- \frac{\lambda+\mu}{n\lambda+(n+1)\mu}\tr(\rho_h)\mathbb{I}  \right\} \right)  \right\|_{0,T}^2+\\
+\sum_{e\in\mathcal{E}(T)\cap\mathcal{E}_h(\O)}h_e\left\|\jump{ \frac{1}{\mu}\left\{\rho_h- \frac{\lambda+\mu}{n\lambda+(n+1)\mu}\tr(\rho_h)\mathbb{I} \right\}\times\bn} \right\|_{0,e}^2\\
+\sum_{e\in\mathcal{E}(T)\cap\mathcal{E}_h(\partial\O)}h_e\left\|\frac{1}{\mu}\left\{\rho_h-\frac{\lambda+\mu}{n\lambda+(n+1)\mu}\tr(\rho_h)\mathbb{I}  \right\}\times\bn \right\|_{0,e}^2,
\end{multline}
and the global estimator is defined by
\begin{equation}
\label{eq:global_est}
\eta:=\left\{ \sum_{T\in\mathcal{T}_h}\eta_T^2\right\}^{1/2}.
\end{equation}

\subsection{Error indicator for the limit problem}
\label{subsec:limit}
It is well known that when $\nu=1/2$, the Lam\'e constant $\lambda$ goes to infinity. This 
behavior leads to a new spectral problem called \emph{the perfectly incompressible elasticity eigenvalue problem}.
In \cite{inzunza2021displacementpseudostress} a complete analysis of this problem is performed.

In the present context of a posteriori error estimates, a slight difference from this reference is needed. To make matters precise, 
the limit problem for the a posteriori analysis is based in the definition of \eqref{eq:a_original}, where we need to compute the limit when $\lambda\rightarrow\infty$. From this computation we have

\begin{equation*}
a_{\infty}(\bxi,\btau)=\lim_{\lambda\rightarrow\infty}a(\bxi,\btau)=\frac{1}{\mu}\int_{\Omega}\bxi:\btau-\frac{1}{n\mu}\int_{\O}\tr(\bxi)\tr(\btau)\quad\forall\bxi,\btau\in\mathbb{H},
\end{equation*}
and hence, problem \eqref{def:elast_system_rho} is rewritten as follows
\begin{equation*}\label{def:elast_system_rho_limit}
\left\{
\begin{array}{rcll}
\displaystyle\frac{1}{\mu}\left\{\boldsymbol{\rho}-\frac{1}{n}\tr(\boldsymbol{\rho})\mathbb{I} \right\}& = &\nabla\bu&  \text{ in } \quad \Omega, \\
\div\boldsymbol{\rho} & = & -\kappa\bu & \text{ in } \quad \Omega, \\
\bu & = & \mathbf{0} & \text{ on } \quad \partial\Omega.
\end{array}
\right.
\end{equation*}

Then, the local error indicator for the limit problem is defined as follows
\begin{multline*}
\eta_{T,\infty}^2:=\|\Theta_h\bu_h-\bu_h\|_{0,T}^2+h_T^2\left\|\nabla \bu_h-\frac{1}{\mu}\left\{ \rho_h-\frac{1}{n}\tr(\rho_h)\mathbb{I} \right\} \right\|^2_{0,T}\\
+h_T^2\left\|\curl\left(\frac{1}{\mu}\left\{\rho_h- \frac{1}{n}\tr(\rho_h)\mathbb{I}  \right\} \right)  \right\|_{0,T}^2+\\
+\sum_{e\in\mathcal{E}(T)\cap\mathcal{E}_h(\O)}h_e\left\|\jump{ \frac{1}{\mu}\left\{\rho_h- \frac{1}{n}\tr(\rho_h)\mathbb{I} \right\}\times\bn} \right\|_{0,e}^2\\
+\sum_{e\in\mathcal{E}(T)\cap\mathcal{E}_h(\partial\O)}h_e\left\|\frac{1}{\mu}\left\{\rho_h-\frac{1}{n}\tr(\rho_h)\mathbb{I}  \right\}\times\bn \right\|_{0,e}^2,
\end{multline*}
and the global estimator is defined by
\begin{equation}
\label{eq:global_est_limit}
\eta_{\infty}:=\left\{ \sum_{T\in\mathcal{T}_h}\eta_{T,\infty}^2\right\}^{1/2}.
\end{equation}

\subsection{Reliability}
In this section we provide an upper bound for the proposed estimator  \eqref{eq:global_est}. We begin with the following estimate for the error

\begin{lemma}
\label{lema_cota_s}
Let $(\kappa,\boldsymbol{\rho}, \bu)\in\R\times\mathbb{H}_{0}\times \mathbf{Q}$ be the solution of \eqref{def:spectral_H0}  and let $(\kappa_{h}$,$\boldsymbol{\rho}_h$,$\bu_h)\in\R\times\mathbb{H}_{0,h}\times\mathbf{Q}_h$ be its finite element approximation, given as the  solution of  \eqref{def:spectral_1h}.  Then for all $\btau\in \mathbb{H}_{0}$ we have.
\begin{multline}
\label{eq:error_bound1}
\|\boldsymbol{\rho}-\boldsymbol{\rho}_{h}\|_{\bdiv,\O}+\|\bu-\bu_{h}\|_{0,\O}\leq C\left(\displaystyle\sup_{\underset{\btau\neq\boldsymbol{0}}{\btau\in\mathbb{H}_{0}}}\frac{-a(\boldsymbol{\rho}_{h},\btau)-b(\btau,\bu_h)}{\|\btau\|_{\bdiv,\O}}\right.\\
\left.+\underbrace{|\kappa_{h}-\kappa |+\|\bu-\Theta_h\bu_{h}\|_{0,\O}}_{\text{h.o.t}}+\|\Theta_h\bu_{h}-\bu_{h}\|_{0,\O}\right).
\end{multline}
\end{lemma}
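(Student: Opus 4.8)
The plan is to reduce the estimate to the abstract inf-sup condition \eqref{eq:complete_infsup} and to identify the residual functional, then absorb the part of the residual that comes from the second equation into the high-order term using the superconvergence machinery of Section~\ref{sec:FEMMM}. Concretely, I would start from \eqref{eq:complete_infsup} applied to the error pair $(\boldsymbol{\rho}-\boldsymbol{\rho}_h,\bu-\bu_h)\in\mathbb{H}_0\times\mathbf{Q}$ (note $\boldsymbol{\rho}_h\in\mathbb{H}_{0,h}\subset\mathbb{H}_0$, so this is legitimate), writing
\begin{equation*}
\|(\boldsymbol{\rho}-\boldsymbol{\rho}_h,\bu-\bu_h)\|_{\mathbb{H}_0\times\mathbf{Q}}\leq C\sup_{(\boldsymbol{\xi},\bw)\neq\boldsymbol{0}}\frac{a(\boldsymbol{\rho}-\boldsymbol{\rho}_h,\boldsymbol{\xi})+b(\boldsymbol{\xi},\bu-\bu_h)+b(\boldsymbol{\rho}-\boldsymbol{\rho}_h,\bw)}{\|(\boldsymbol{\xi},\bw)\|_{\mathbb{H}_0\times\mathbf{Q}}}.
\end{equation*}
Using the continuous equations \eqref{def:spectral_H0}, the first two numerator terms equal $-a(\boldsymbol{\rho}_h,\boldsymbol{\xi})-b(\boldsymbol{\xi},\bu_h)$, which is exactly the quantity appearing in the supremum on the right-hand side of \eqref{eq:error_bound1}; and the third term equals $-\kappa(\bu,\bw)_{0,\O}-b(\boldsymbol{\rho}_h,\bw)$.

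The heart of the argument is then controlling the functional $\bw\mapsto -\kappa(\bu,\bw)_{0,\O}-b(\boldsymbol{\rho}_h,\bw)$. Here I would use the discrete second equation $b(\boldsymbol{\rho}_h,\bv_h)=-\kappa_h(\bu_h,\bv_h)_{0,\O}$ for $\bv_h\in\mathbf{Q}_h$. Since $\bdiv\boldsymbol{\rho}_h\in\mathbf{Q}_h$, we have $b(\boldsymbol{\rho}_h,\bw)=(\bdiv\boldsymbol{\rho}_h,\bw)_{0,\O}=(\bdiv\boldsymbol{\rho}_h,\mathcal{P}_h^0\bw)_{0,\O}=b(\boldsymbol{\rho}_h,\mathcal{P}_h^0\bw)=-\kappa_h(\bu_h,\mathcal{P}_h^0\bw)_{0,\O}=-\kappa_h(\bu_h,\bw)_{0,\O}$ again using that $\bu_h\in\mathbf{Q}_h$. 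Therefore
\begin{equation*}
-\kappa(\bu,\bw)_{0,\O}-b(\boldsymbol{\rho}_h,\bw)=-\kappa(\bu,\bw)_{0,\O}+\kappa_h(\bu_h,\bw)_{0,\O}=\big((\kappa_h-\kappa)\bu_h+\kappa(\bu_h-\bu),\bw\big)_{0,\O}.
\end{equation*}
By Cauchy--Schwarz this is bounded by $\big(|\kappa_h-\kappa|\|\bu_h\|_{0,\O}+\kappa\|\bu-\bu_h\|_{0,\O}\big)\|\bw\|_{0,\O}$. The term $|\kappa_h-\kappa|$ is one of the h.o.t.\ in \eqref{eq:error_bound1}; the remaining term $\kappa\|\bu-\bu_h\|_{0,\O}$ is the genuine obstacle, since it is \emph{not} high-order and, if left as is, would appear on the right with a constant we cannot guarantee is small.

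To deal with $\|\bu-\bu_h\|_{0,\O}$ I would split it via the triangle inequality as $\|\bu-\Theta_h\bu_h\|_{0,\O}+\|\Theta_h\bu_h-\bu_h\|_{0,\O}$; the first piece is the h.o.t.\ $\|\bu-\Theta_h\bu_h\|_{0,\O}$ (high-order by Lemma~\ref{lmm:super} combined with Lemma~\ref{postprocessing}(1) and the a~priori bound of Lemma~\ref{lema:apriorie}, modulo the eigenfunction regularity assumption) and the second piece is exactly the last explicit term in \eqref{eq:error_bound1}. Assembling: the supremum in \eqref{eq:complete_infsup} is bounded by $C$ times the sum of the residual supremum over $\btau\in\mathbb{H}_0$, the quantities $|\kappa_h-\kappa|$, $\|\bu-\Theta_h\bu_h\|_{0,\O}$, and $\|\Theta_h\bu_h-\bu_h\|_{0,\O}$, which is precisely \eqref{eq:error_bound1}. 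The main subtlety to be careful about is that the constant $C$ coming out of \eqref{eq:complete_infsup} and the factor $\kappa$ must be genuinely $h$- and $\lambda$-independent — the latter relying on the form \eqref{eq:identity_a} of $a(\cdot,\cdot)$ and on Assumption~2.1 — so that the "h.o.t." labels are actually justified; the algebraic manipulation itself is routine once the projection identity $b(\boldsymbol{\rho}_h,\bw)=b(\boldsymbol{\rho}_h,\mathcal{P}_h^0\bw)$ is in hand.
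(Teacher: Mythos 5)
Your proposal is correct and follows essentially the same route as the paper: apply the inf-sup condition \eqref{eq:complete_infsup} to the error pair, use the first continuous equation to produce the residual supremum, and bound the $b(\boldsymbol{\rho}-\boldsymbol{\rho}_h,\cdot)$ term via $\bdiv\boldsymbol{\rho}_h=-\kappa_h\bu_h$ (which you obtain through the $\mathcal{P}_h^0$ projection identity, while the paper invokes it directly), followed by the triangle inequality through $\Theta_h\bu_h$ and the normalization $\|\bu_h\|_{0,\O}=1$. The only cosmetic difference is your slightly more explicit justification of the identity $b(\boldsymbol{\rho}_h,\bw)=-\kappa_h(\bu_h,\bw)_{0,\O}$.
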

\begin{proof}
Applying the inf-sup condition \eqref{eq:complete_infsup} on the errors $\boldsymbol{\rho}-\boldsymbol{\rho}_h$ and $\bu-\bu_h$ we have
that there exists a constant $C>0$ such that  
\begin{align*}
\|(\boldsymbol{\rho}\hspace*{-0.045cm}-\hspace*{-0.045cm}\boldsymbol{\rho}_{h},\bu\hspace*{-0.045cm}-\hspace*{-0.045cm}\bu_{h})\|_{\mathbb{H}_{0}\times \mathbf{Q}}\leq &C\left(\hspace*{-0.1cm}\displaystyle\sup_{\underset{(\btau,\bv)\neq \boldsymbol{0}}{(\btau,\bv)\in \mathbb{H}_{0}\times\mathbf{Q}}}\frac{a(\boldsymbol{\rho}\hspace*{-0.045cm}-\hspace*{-0.045cm}\boldsymbol{\rho}_{h},\btau)\hspace*{-0.045cm}+\hspace*{-0.045cm}b(\btau,\bu\hspace*{-0.045cm}-\hspace*{-0.045cm}\bu_h)\hspace*{-0.045cm}+\hspace*{-0.045cm}b(\boldsymbol{\rho}\hspace*{-0.045cm}-\hspace*{-0.045cm}\boldsymbol{\rho}_h,\bv)}{\|(\btau,\bv)\|_{\mathbb{H}_{0}\times \mathbf{Q}}}\right)\\
\leq &C\left(\displaystyle\sup_{\underset{\btau\neq\boldsymbol{0}}{\btau\in\mathbb{H}_{0}}}\frac{-a(\boldsymbol{\rho}_{h},\btau)-b(\btau,\bu_h)}{\|\btau\|_{\bdiv,\O}}+\displaystyle\sup_{\underset{\bv\neq\boldsymbol{0}}{\bv\in\mathbf{Q}}}\frac{b(\boldsymbol{\rho}-\boldsymbol{\rho}_h,\bv)}{\|\bv\|_{0,\O}}\right),
\end{align*}
where we have used the first equation of \eqref{def:spectral_H0}. Now, according to the definition of the bilinear operator $b(\cdot,\cdot)$, the second equation of \eqref{def:spectral_H0} and that $\div(\boldsymbol{\rho}_{h})=-\kappa_h\bu_h$,  and finally using the Cauchy–Schwarz inequality, we get
\begin{align*}
\displaystyle\sup_{\underset{\bv\neq\boldsymbol{0}}{\bv\in\mathbf{Q}}}\frac{b(\boldsymbol{\rho}-\boldsymbol{\rho}_h,\bv)}{\|\bv\|_{0,\O}}&\leq \|\kappa_h\bu_h-\kappa\bu\|_{0,\O}\\
&\leq |\kappa_h-\kappa|\|\bu_h\|_{0,\O}+|\kappa|\|\bu-\bu_h\|_{0,\O}\\
&\leq |\kappa_h-\kappa|\|\bu_h\|_{0,\O}+|\kappa|\left(\|\bu-\Theta_h\bu_h\|_{0,\O}+\|\Theta_h\bu_h-\bu_h\|_{0,\O}\right).
\end{align*}
Then, using the above estimate together that $\|\bu_h\|_{0,\O}=1$ we have 
\begin{align*}
\|\boldsymbol{\rho}-\boldsymbol{\rho}_{h}\|_{\bdiv,\O}+\|\bu-\bu_{h}\|_{0,\O}\leq &C\left(\displaystyle\sup_{\underset{\btau\neq\boldsymbol{0}}{\btau\in\mathbb{H}_{0}}}\frac{-a(\boldsymbol{\rho}_{h},\btau)-b(\btau,\bu_h)}{\|\btau\|_{\bdiv,\O}}\right.\\
&\left.+\underbrace{|\kappa_{h}-\kappa |+\|\bu-\Theta_h\bu_{h}\|_{0,\O}}_{\text{h.o.t}}+\|\Theta_h\bu_{h}-\bu_{h}\|_{0,\O}\right).
\end{align*}
This concludes the proof.
\end{proof}
\begin{remark}\label{eq:hot}
We note that, thanks to Lemmas \ref{lema:apriorie}, \ref{postprocessing} and \ref{lmm:super}, the estimate for the high order term
\begin{equation*}
\text{h.o.t}\leq C h^s \left(\|\boldsymbol{\rho}-\boldsymbol{\rho}_{h}\|_{0,\O}+\|\bu-\bu_{h}\|_{0,\O}\right)
+\|\bu-\Theta_h\bu\|_{0,\O}
\leq Ch^{2s},
\end{equation*}
holds, where the constant $C$ is uniform on $\lambda$ and $h$.
\end{remark}

Our next goal is to bound the supremum in Lemma  \ref{lema_cota_s}. To do this task,
let $\btau\in \mathbb{H}_{0}$ as above  lemma, using the Helmholtz decomposition of $\btau$ given by Lemma \ref{lm:helm}, i.e, $\btau=\nabla \boldsymbol{z}+\curl{\boldsymbol{\chi}}$,  suggests defining $\btau_h\in\mathbb{H}_{h}$ through  a discrete Helmholtz decomposition, as follows
\begin{equation*}
\btau_{h}:=\bPi_h^1\left(\nabla\boldsymbol{z}\right)+\curl(\boldsymbol{\chi}_{h})-d_{h}\mathbb{I},
\end{equation*}
where 
$\boldsymbol{\chi}_{h}:=(\boldsymbol{\chi}_{1h}, \ldots,\boldsymbol{\chi}_{nh})^{t}$, with $\boldsymbol{\chi}_{ih}:=\boldsymbol{I}_{h}(\boldsymbol{\chi}_{i})$ for $i=\{1,...,n\}$, $\bPi_h^1$ is the Raviart-Thomas interpolation operator that satisfies properties  \eqref{daniel1}-\eqref{daniel4}, and the constant $d_{h}$ is chosen in the following way
\begin{align*}
d_{h}:=\dfrac{1}{n|\O|}\int_{\O}\tr(\btau_{h})&=\dfrac{1}{n|\O|}\int_{\O}\tr\left(\bPi_h^1\left(\nabla\boldsymbol{z}\right)+\curl(\boldsymbol{\chi}_{h})\right)\\
&=-\dfrac{1}{n|\O|}\int_{\O}\tr\left(\nabla\boldsymbol{z}-\bPi_h^1\left(\nabla\boldsymbol{z}\right)+\curl(\boldsymbol{\chi}-\boldsymbol{\chi}_{h})\right),
\end{align*}
in order to admit that $\btau_{h}\in \mathbb{H}_{h,0}$. Notice that  we have used the fact that  $\btau\in  \mathbb{H}_{0}$ and its Helmoltz decomposition.

 As a first step to bound the supremum appearing on the right hand side of \eqref{eq:error_bound1}, we  note that for all $\boldsymbol{\xi}_{h}\in\mathbb{H}_{0,h}$, thanks to the first equation of \eqref{def:spectral_1h}, there holds
\begin{equation*}
a(\boldsymbol{\rho}_{h},\boldsymbol{\xi}_{h})+b(\boldsymbol{\xi}_{h},\bu_h)=0.
\end{equation*}
Let $\boldsymbol{\xi}\in\mathbb{H}$ be such that $$\bxi:=\btau-\btau_{h}=\nabla\boldsymbol{z}-\bPi_h^1\left(\nabla\boldsymbol{z}\right)+\curl(\boldsymbol{\chi}-\boldsymbol{\chi}_{h})+d_{h}\mathbb{I}.$$
Since $\textbf{div}(\textbf{curl}(\boldsymbol{\chi}-\boldsymbol{\chi}_{h}))=\textbf{div}(d_{h}\mathbb{I})=0$, then $\textbf{div}(\bxi)=\textbf{div}(\nabla\boldsymbol{z}-\bPi_h^1\left(\nabla\boldsymbol{z}\right))=\textbf{div}(\nabla\boldsymbol{z})-\mathcal{P}_h^k(\bdiv(\nabla\boldsymbol{z}))$ (see \eqref{eq:commutative}) and using that $\mathcal{P}_h^k$ is the $\L^2(\O)$-orthogonal projector,  we have that
$
b(\bxi,\bu_h)=0.
$
Therefore, from the fact that $\boldsymbol{\rho}_{h}\in\mathbb{H}_{0,h}$ we  obtain the following identity
\begin{align*}
-\left[a(\boldsymbol{\rho}_{h},\btau)+b(\btau,\bu_h)\right]&=-\left[a(\boldsymbol{\rho}_{h},\bxi)+b(\bxi,\bu_h)\right]=-a(\boldsymbol{\rho}_{h},\bxi).
\end{align*}
Now, invoking the definition of $\bxi$ and that $a(\boldsymbol{\rho}_{h},d_{h}\mathbb{I})=d_{h}a(\boldsymbol{\rho}_{h},\mathbb{I})=0$ we obtain  
\begin{multline}\label{eq:sup}
-\left[a(\boldsymbol{\rho}_{h},\btau)+b(\btau,\bu_h)\right]\\
=\underbrace{-a(\boldsymbol{\rho}_{h},\nabla\boldsymbol{z}-\bPi_h^1\left(\nabla\boldsymbol{z}\right))}_{\mathfrak{T}_1}
+\underbrace{-a(\boldsymbol{\rho}_{h},\curl(\boldsymbol{\chi}-\boldsymbol{\chi}_{h}))}_{\mathfrak{T}_{2}}.\end{multline}

The following step is to bound the terms $\mathfrak{T}_{1}$ and $\mathfrak{T}_2$. We begin with $\mathfrak{T}_1$.
\begin{lemma}
\label{lm_a1}
There exists $C>0$, independent of $\lambda$ and $h$, such that
\begin{equation*}
\left| \mathfrak{T}_1\right|\leq C\left\{\sum_{T\in\CT_{h}}\eta_{T}^{2} \right\}^{1/2}\|\btau\|_{\bdiv,\O}.
\end{equation*}
\end{lemma}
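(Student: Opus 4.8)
The plan is to estimate $\mathfrak{T}_1=-a(\boldsymbol{\rho}_h,\nabla\boldsymbol{z}-\bPi_h^1\nabla\boldsymbol{z})$ directly, with no integration by parts, taking advantage of the fact that the mesh-size weight $h_T^2$ is already built into $\eta_T$. First I would rewrite $a(\boldsymbol{\rho}_h,\cdot)$ in the form that appears in the estimator: from \eqref{eq:a_original}, $a(\boldsymbol{\rho}_h,\btau)=\int_\O\boldsymbol{\Xi}_h:\btau$, where $\boldsymbol{\Xi}_h:=\tfrac1\mu\bigl\{\boldsymbol{\rho}_h-\tfrac{\lambda+\mu}{n\lambda+(n+1)\mu}\tr(\boldsymbol{\rho}_h)\mathbb{I}\bigr\}$ is exactly the tensor occurring in \eqref{eq:local_eta}. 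Since $\bu_h\in\mathbf{Q}_h=\mathbf{P}_0(\CT_h)$ we have $\nabla\bu_h=\mathbf{0}$ on each element, so $\boldsymbol{\Xi}_h=-(\nabla\bu_h-\boldsymbol{\Xi}_h)$, and therefore
\[
\mathfrak{T}_1=\sum_{T\in\CT_h}\int_T(\nabla\bu_h-\boldsymbol{\Xi}_h):(\nabla\boldsymbol{z}-\bPi_h^1\nabla\boldsymbol{z}),
\]
which already displays the residual quantity measured by the second term of $\eta_T^2$.

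Next I would apply the Cauchy--Schwarz inequality elementwise and invoke the (localized) Raviart--Thomas interpolation estimate \eqref{daniel2} with $t=1$: this is legitimate because the Helmholtz component satisfies $\boldsymbol{z}\in[\H^2(\O)]^n$ by Lemma \ref{lm:helm}, hence $\nabla\boldsymbol{z}\in\mathbb{H}^1(\O)$ and $\bPi_h^1\nabla\boldsymbol{z}$ is admissible, giving $\|\nabla\boldsymbol{z}-\bPi_h^1\nabla\boldsymbol{z}\|_{0,T}\le C h_T\,|\boldsymbol{z}|_{2,T}$. Summing over $T$ and using a discrete Cauchy--Schwarz inequality then yields $|\mathfrak{T}_1|\le C\bigl(\sum_{T\in\CT_h} h_T^2\|\nabla\bu_h-\boldsymbol{\Xi}_h\|_{0,T}^2\bigr)^{1/2}|\boldsymbol{z}|_{2,\O}$. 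Finally I would use the stability of the Helmholtz decomposition, $|\boldsymbol{z}|_{2,\O}\le\|\boldsymbol{z}\|_{2,\O}\le C\|\btau\|_{\bdiv,\O}$ (Lemma \ref{lm:helm}), and observe that $\sum_{T\in\CT_h} h_T^2\|\nabla\bu_h-\boldsymbol{\Xi}_h\|_{0,T}^2\le\sum_{T\in\CT_h}\eta_T^2$, since this is precisely one of the terms defining $\eta_T^2$ in \eqref{eq:local_eta}. Because $\boldsymbol{\Xi}_h$ carries only coefficients that stay bounded as $\lambda\to\infty$ (equivalently, $a(\cdot,\cdot)$ is used in the $\lambda$-robust form \eqref{eq:identity_a}), the resulting constant $C$ is independent of both $\lambda$ and $h$.

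I do not expect any real obstacle in this lemma. The only two points needing a word of care are: (i) admissibility of $\bPi_h^1\nabla\boldsymbol{z}$, guaranteed by the $\H^2$-regularity of $\boldsymbol{z}$ from the Helmholtz decomposition; and (ii) the passage from the global interpolation estimate \eqref{daniel2} to its elementwise counterpart with $h_T$ in place of $h$, which is standard since the RT interpolant and the associated estimates are local. The mesh-weight $h_T^2$ inside $\eta_T$ is exactly what makes this crude elementwise bound sufficient; the more delicate integration-by-parts and tangential-jump arguments are instead reserved for the companion term $\mathfrak{T}_2$.
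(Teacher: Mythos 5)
Your proof is correct and follows essentially the same route as the paper: both insert $\nabla\bu_h$ for free (since $\bu_h$ is piecewise constant), recognize the resulting integrand as the residual tensor appearing in the second term of $\eta_T^2$, and conclude via elementwise Cauchy--Schwarz, the local Raviart--Thomas interpolation estimate \eqref{daniel2}, and the stability bound $\|\boldsymbol{z}\|_{2,\O}\leq C\|\btau\|_{\bdiv,\O}$ from Lemma \ref{lm:helm}. No gaps.
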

\begin{proof}
From the definition of   $\boldsymbol{\rho}_{h}^{\texttt{d}}$ and the identity  $\tr(\btau)=\btau:\mathbb{I}$ we obtain 
\begin{align*}
\mathfrak{T}_1=-\int_{\Omega}\frac{1}{\mu}\left\{\boldsymbol{\rho}_{h}-\dfrac{\lambda+\mu}{n\lambda+(n+1)\mu}\tr(\boldsymbol{\rho}_{h})\mathbb{I}\right\}:(\nabla\boldsymbol{z}-\bPi_h^1\left(\nabla\boldsymbol{z}\right)).
\end{align*}
On the other hand, since $\bu_h\in \mathbf{P}_0(T)$, for all $T\in\CT_h$, we obtain 
\begin{align*}
\int_{\Omega}\nabla\bu_h:(\nabla\boldsymbol{z}-\bPi_h^1\left(\nabla\boldsymbol{z}\right))=0.
\end{align*}
Therefore, using \eqref{daniel2} and Lemma \ref{lm:helm}, we have
\begin{align*}
\left| \mathfrak{T}_1\right|&\leq \sum_{T\in \CT_{h}}h_{T}\left\|\nabla\bu_h-\frac{1}{\mu}\left\{\boldsymbol{\rho}_{h}-\dfrac{\lambda+\mu}{n\lambda+(n+1)\mu}\tr(\boldsymbol{\rho}_{h})\mathbb{I}\right\}\right\|_{0,T}\|\nabla\boldsymbol{z}\|_{1,T}\\
&\leq C\left\{\sum_{T\in\CT_{h}}h_{T}^{2}\left\|\nabla\bu_h-\frac{1}{\mu}\left\{\boldsymbol{\rho}_{h}-\dfrac{\lambda+\mu}{n\lambda+(n+1)\mu}\tr(\boldsymbol{\rho}_{h})\mathbb{I}\right\}\right\|_{0,T}^{2} \right\}^{1/2}\|\btau\|_{\bdiv,\O}\\
&\leq C\left\{\sum_{T\in\CT_{h}}\eta_{T}^{2} \right\}^{1/2}\|\btau\|_{\bdiv,\O}.
\end{align*}
This concludes the proof.
\end{proof}

The bound for  $\mathfrak{T}_2$ contained in the following lemma.
\begin{lemma}
\label{lm_a2}
There exists $C>0$, independent of $\lambda$ and $h$, such that
\begin{equation*}
\left|\mathfrak{T}_2\right|\leq C\left\{\sum_{T\in\CT_{h}}\eta_{T}^{2} \right\}^{1/2}\|\btau\|_{\bdiv,\O}.
\end{equation*}
\end{lemma}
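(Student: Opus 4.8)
The plan is to bound $\mathfrak{T}_2=-a(\boldsymbol{\rho}_h,\curl(\boldsymbol{\chi}-\boldsymbol{\chi}_h))$ by the standard Clément-interpolation and integration-by-parts technique, testing against the tangential-jump and element-residual terms that appear in $\eta_T$. First I would use the identity \eqref{eq:identity_a} (or equivalently the deviator form of $a(\cdot,\cdot)$) to write $\mathfrak{T}_2=-\frac{1}{\mu}\int_\Omega\left\{\boldsymbol{\rho}_h-\frac{\lambda+\mu}{n\lambda+(n+1)\mu}\tr(\boldsymbol{\rho}_h)\mathbb{I}\right\}:\curl(\boldsymbol{\chi}-\boldsymbol{\chi}_h)$, observing that $\int_\Omega\nabla\bu_h:\curl(\boldsymbol{\chi}-\boldsymbol{\chi}_h)=0$ since $\bu_h$ is piecewise constant and $\curl$ of anything integrates to a boundary term that vanishes against a constant (or directly: $\nabla\bu_h|_T=\mathbf{0}$). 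Hence I may subtract $\nabla\bu_h$ freely and work with the quantity $\boldsymbol{G}_h:=\nabla\bu_h-\frac{1}{\mu}\{\boldsymbol{\rho}_h-\frac{\lambda+\mu}{n\lambda+(n+1)\mu}\tr(\boldsymbol{\rho}_h)\mathbb{I}\}$, which is exactly the residual measured by the second and third terms of $\eta_T$.

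Next I would integrate by parts elementwise. On each $T$, $\int_T\boldsymbol{G}_h:\curl(\boldsymbol{\chi}-\boldsymbol{\chi}_h)=\int_T\curl(\boldsymbol{G}_h)\cdot(\boldsymbol{\chi}-\boldsymbol{\chi}_h)+\int_{\partial T}(\boldsymbol{G}_h\times\bn)\cdot(\boldsymbol{\chi}-\boldsymbol{\chi}_h)$ (with the appropriate sign/orientation conventions for $\curl$ and the tangential trace in 2D and 3D). Summing over $T\in\mathcal{T}_h$, the boundary contributions on interior edges/faces collapse into jump terms $\jumpp{\boldsymbol{G}_h\times\bn_e}$, while on boundary edges/faces one retains $\boldsymbol{G}_h\times\bn$ directly; note that $\boldsymbol{\chi}-\boldsymbol{\chi}_h$ need not vanish on $\partial\Omega$, which is precisely why the estimator \eqref{eq:local_eta} carries the $\mathcal{E}_h(\partial\O)$ contribution. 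Then I apply Cauchy–Schwarz on each element and each edge/face, followed by the Clément approximation bounds of Lemma \ref{I:clemont}, namely $\|\boldsymbol{\chi}_i-\boldsymbol{I}_h\boldsymbol{\chi}_i\|_{0,T}\leq c_1 h_T\|\boldsymbol{\chi}_i\|_{1,\omega_T}$ and $\|\boldsymbol{\chi}_i-\boldsymbol{I}_h\boldsymbol{\chi}_i\|_{0,e}\leq c_2 h_e^{1/2}\|\boldsymbol{\chi}_i\|_{1,\omega_e}$. This produces factors $h_T$ and $h_e^{1/2}$ matching the weights in $\eta_T$; a discrete Cauchy–Schwarz over $T$ and $e$ together with finite-overlap of the patches $\omega_T,\omega_e$ then yields $|\mathfrak{T}_2|\leq C\{\sum_T\eta_T^2\}^{1/2}\|\boldsymbol{\chi}\|_{1,\Omega}$, and finally the Helmholtz stability bound $\|\boldsymbol{\chi}\|_{1,\Omega}\leq C\|\btau\|_{\bdiv,\O}$ from Lemma \ref{lm:helm} closes the estimate.

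The step I expect to be the most delicate is the elementwise integration by parts and the bookkeeping of the tangential traces: one must be careful that $\curl$ acting on a tensor is interpreted row-wise (consistent with $\mathbb{H}(\curl,\Omega)$ as defined in the introduction), that the jump $\jumpp{\cdot\times\bn_e}$ is oriented consistently with the fixed normals $\bn_e$, and that the same argument is valid in both $n=2$ and $n=3$ (in 2D the ``curl'' of a vector is scalar-valued and the tangential trace is a scalar, so the pairing with $\boldsymbol{\chi}-\boldsymbol{\chi}_h$ must be read accordingly). One should also confirm that $\curl(\boldsymbol{G}_h)$ is well defined elementwise — it is, since $\boldsymbol{\rho}_h\in\mathbb{RT}_0(\mathcal{T}_h)$ is a polynomial on each $T$ — so the volumetric residual term $h_T^2\|\curl(\frac{1}{\mu}\{\boldsymbol{\rho}_h-\cdots\})\|_{0,T}^2$ in $\eta_T$ is exactly what absorbs $\int_T\curl(\boldsymbol{G}_h)\cdot(\boldsymbol{\chi}-\boldsymbol{\chi}_h)$ after one more application of Cauchy–Schwarz and the $h_T$ Clément bound. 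Everything else is routine once the geometry of the traces is fixed.

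\begin{proof}
We only sketch the argument, which follows the pattern described above. Using the deviator form \eqref{eq:identity_a} of $a(\cdot,\cdot)$ and the fact that $\nabla\bu_h|_T=\mathbf{0}$ for all $T\in\mathcal{T}_h$ (since $\bu_h\in\mathbf{P}_0(T)$), we may write
\begin{equation*}
\mathfrak{T}_2=-\frac{1}{\mu}\int_{\Omega}\left\{\boldsymbol{\rho}_h-\frac{\lambda+\mu}{n\lambda+(n+1)\mu}\tr(\boldsymbol{\rho}_h)\mathbb{I}\right\}:\curl(\boldsymbol{\chi}-\boldsymbol{\chi}_h)=\sum_{T\in\CT_h}\int_T\boldsymbol{G}_h:\curl(\boldsymbol{\chi}-\boldsymbol{\chi}_h),
\end{equation*}
where $\boldsymbol{G}_h:=\nabla\bu_h-\frac{1}{\mu}\{\boldsymbol{\rho}_h-\frac{\lambda+\mu}{n\lambda+(n+1)\mu}\tr(\boldsymbol{\rho}_h)\mathbb{I}\}$. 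Integrating by parts on each $T$ row-wise and summing, the interior contributions gather into tangential jumps and the boundary ones remain as tangential traces:
\begin{equation*}
\mathfrak{T}_2=\sum_{T\in\CT_h}\int_T\curl(\boldsymbol{G}_h)\cdot(\boldsymbol{\chi}-\boldsymbol{\chi}_h)+\sum_{e\in\mathcal{E}_h(\O)}\int_e\jump{\boldsymbol{G}_h\times\bn}\cdot(\boldsymbol{\chi}-\boldsymbol{\chi}_h)+\sum_{e\in\mathcal{E}_h(\partial\O)}\int_e(\boldsymbol{G}_h\times\bn)\cdot(\boldsymbol{\chi}-\boldsymbol{\chi}_h).
\end{equation*}
Applying Cauchy–Schwarz on each element and edge/face, then the Clément estimates of Lemma \ref{I:clemont} (applied componentwise to $\boldsymbol{\chi}$, with $\boldsymbol{\chi}_{ih}=\boldsymbol{I}_h\boldsymbol{\chi}_i$), we obtain
\begin{equation*}
|\mathfrak{T}_2|\leq C\sum_{T\in\CT_h}h_T\|\curl\boldsymbol{G}_h\|_{0,T}\|\boldsymbol{\chi}\|_{1,\omega_T}+C\sum_{e\in\mathcal{E}_h}h_e^{1/2}\|\boldsymbol{G}_h\times\bn\|_{0,e}^{\star}\,\|\boldsymbol{\chi}\|_{1,\omega_e},
\end{equation*}
where $\|\cdot\|_{0,e}^{\star}$ denotes the jump norm on interior edges/faces and the plain norm on $\partial\Omega$. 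A discrete Cauchy–Schwarz inequality together with the finite overlap of the patches $\omega_T$ and $\omega_e$ yields
\begin{equation*}
|\mathfrak{T}_2|\leq C\left\{\sum_{T\in\CT_h}\eta_T^2\right\}^{1/2}\|\boldsymbol{\chi}\|_{1,\O}.
\end{equation*}
Finally, Lemma \ref{lm:helm} gives $\|\boldsymbol{\chi}\|_{1,\O}\leq C\|\btau\|_{\bdiv,\O}$, and since all constants involved are independent of $\lambda$ and $h$, the claimed bound follows.
\end{proof}
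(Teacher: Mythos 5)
Your proposal is correct and follows essentially the same route as the paper: elementwise integration by parts to expose the volumetric $\curl$ residual, the interior tangential jumps and the boundary tangential traces, then Cauchy--Schwarz combined with the Cl\'ement estimates of Lemma \ref{I:clemont} and the Helmholtz stability bound of Lemma \ref{lm:helm}. The only cosmetic difference is that you carry $\nabla\bu_h$ inside $\boldsymbol{G}_h$, which is harmless since $\nabla\bu_h$ vanishes elementwise for piecewise constants, so your integrand coincides with the paper's.
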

\begin{proof}
Integrating by parts on each $T\in\CT_{h}$, we obtain that
\begin{align*}
\mathfrak{T}_2
&=-\sum_{T\in \CT_{h}}\left[\int_{T}\curl\left(\dfrac{1}{\mu}\left\{\boldsymbol{\rho}_{h}-\dfrac{\lambda+\mu}{n\lambda+(n+1)\mu}\tr(\boldsymbol{\rho}_{h})\mathbb{I}\right\}\right):\left(\boldsymbol{\chi}-\boldsymbol{\chi}_{h}\right)\right.\\
&+\left.\int_{\partial E}\left(\dfrac{1}{\mu}\left\{\boldsymbol{\rho}_{h}-\dfrac{\lambda+\mu}{n\lambda+(n+1)\mu}\tr(\boldsymbol{\rho}_{h})\mathbb{I}\right\}\right)\times\bn:\left(\boldsymbol{\chi}-\boldsymbol{\chi}_{h}\right)\right]\\
&=-\sum_{T\in \CT_{h}}\int_{T}\curl\left(\dfrac{1}{\mu}\left\{\boldsymbol{\rho}_{h}-\dfrac{\lambda+\mu}{n\lambda+(n+1)\mu}\tr(\boldsymbol{\rho}_{h})\mathbb{I}\right\}\right):\left(\boldsymbol{\chi}-\boldsymbol{\chi}_{h}\right)\\
&-\sum_{e\in \mathcal{E}_{h}(\O)}\int_{e}\jump{ \frac{1}{\mu}\left\{\boldsymbol{\rho}_h- \dfrac{\lambda+\mu}{n\lambda+(n+1)\mu}\tr(\rho_h)\mathbb{I} \right\}\times\bn}:\left(\boldsymbol{\chi}-\boldsymbol{\chi}_{h}\right)\\
&-\sum_{e\in \mathcal{E}_{h}(\partial \O)}\int_{e}\left( \frac{1}{\mu}\left\{\boldsymbol{\rho}_h- \dfrac{\lambda+\mu}{n\lambda+(n+1)\mu}\tr(\rho_h)\mathbb{I} \right\}\times\bn\right):\left(\boldsymbol{\chi}-\boldsymbol{\chi}_{h}\right).
\end{align*}
Applying Cauchy-Schwarz inequality, recalling that $\boldsymbol{\chi}_{h}=\boldsymbol{I}_{h}\boldsymbol{\chi}$, and  invoking the approximation properties presented in  Lemma \ref{I:clemont} and Lemma \ref{lm:helm},  we have
\begin{align*}
|\mathfrak{T}_2|
&\leq \sum_{T\in \CT_{h}}h_{T}\left\|\curl\left(\dfrac{1}{\mu}\left\{\boldsymbol{\rho}_{h}-\dfrac{\lambda+\mu}{n\lambda+(n+1)\mu}\tr(\boldsymbol{\rho}_{h})\mathbb{I}\right\}\right)\right\|_{0,T}\|\boldsymbol{\chi}\|_{1,\omega_{T}}\\
&+\sum_{e\in \mathcal{E}(T)\cap\mathcal{E}_{h}(\O)}h_{e}\left\|\jump{ \frac{1}{\mu}\left\{\boldsymbol{\rho}_h- \dfrac{\lambda+\mu}{n\lambda+(n+1)\mu}\tr(\rho_h)\mathbb{I} \right\}\times\bn}\right\|_{0,e}\|\boldsymbol{\chi}\|_{1,\omega_{e}}\\
&+\sum_{e\in \mathcal{E}(T)\cap\mathcal{E}_{h}(\partial \O)}h_{e}\left\| \frac{1}{\mu}\left\{\boldsymbol{\rho}_h- \dfrac{\lambda+\mu}{n\lambda+(n+1)\mu}\tr(\rho_h)\mathbb{I} \right\}\times\bn\right\|_{0,e}\|\boldsymbol{\chi}\|_{1,\omega_{e}}\\
&\leq C\left\{\sum_{T\in\CT_{h}}\eta_{T}^{2} \right\}^{1/2}\|\btau\|_{\bdiv,\O}.
\end{align*}
This concludes the proof.
\end{proof}
As a consequence of Lemma \ref{lema:apriorie}, Lemma \ref{lema_cota_s}, Remark \ref{eq:hot}, estimate \eqref{eq:sup}, Lemmas \ref{lm_a1} and \ref{lm_a2}, and the definition of the local estimator $\eta_T$, we have the following result 
\begin{prop}
Let $(\kappa,\boldsymbol{\rho}, \bu)\in\R\times\mathbb{H}_{0}\times \mathbf{Q}$ be the solution of \eqref{def:spectral_H0}  and let $(\kappa_{h},\boldsymbol{\rho}_h,\bu_h)\in\R\times\mathbb{H}_{0,h}\times\mathbf{Q}_h$ solution of  \eqref{def:spectral_1h}. Then,  there exist positive constants $C$ and $h_0$, with $C$ independent of $h$ and $\lambda$, such that, for all $h < h_0$, there holds.
\begin{align*}
\|\boldsymbol{\rho}-\boldsymbol{\rho}_{h}\|_{\bdiv,\O}+\|\bu-\bu_{h}\|_{0,\O}&\leq C\left(\left\{\sum_{T\in\CT_{h}}\eta_{T}^{2} \right\}^{1/2}+\|\bu-\Theta_h\bu\|_{0,\O}\right),\\
|\kappa_{h}-\kappa |&\leq C\left(\sum_{T\in\CT_{h}}\eta_{T}^{2}+\|\bu-\Theta_h\bu\|_{0,\O}^2\right).
\end{align*}
\end{prop}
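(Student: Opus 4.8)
The plan is to assemble the reliability estimate directly from the pieces that have already been prepared in the excerpt. First, I would start from Lemma \ref{lema_cota_s}, which gives
\[
\|\boldsymbol{\rho}-\boldsymbol{\rho}_{h}\|_{\bdiv,\O}+\|\bu-\bu_{h}\|_{0,\O}\leq C\left(\sup_{\btau\in\mathbb{H}_0,\,\btau\neq\0}\frac{-a(\boldsymbol{\rho}_h,\btau)-b(\btau,\bu_h)}{\|\btau\|_{\bdiv,\O}}+\text{h.o.t}+\|\Theta_h\bu_h-\bu_h\|_{0,\O}\right).
\]
The term $\|\Theta_h\bu_h-\bu_h\|_{0,\O}$ is exactly the first contribution to $\eta_T^2$ summed over $T$, so it is already controlled by $\{\sum_T\eta_T^2\}^{1/2}$. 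For the supremum, I would fix an arbitrary $\btau\in\mathbb{H}_0$, invoke the Helmholtz decomposition of Lemma \ref{lm:helm}, build the discrete object $\btau_h$ exactly as constructed in the text, and use identity \eqref{eq:sup} to split $-[a(\boldsymbol{\rho}_h,\btau)+b(\btau,\bu_h)]=\mathfrak{T}_1+\mathfrak{T}_2$. Then Lemmas \ref{lm_a1} and \ref{lm_a2} bound each of $|\mathfrak{T}_1|,|\mathfrak{T}_2|$ by $C\{\sum_T\eta_T^2\}^{1/2}\|\btau\|_{\bdiv,\O}$; dividing by $\|\btau\|_{\bdiv,\O}$ and taking the supremum over $\btau$ gives the bound on the supremum term by $C\{\sum_T\eta_T^2\}^{1/2}$.

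Next I would deal with the high-order term. By Remark \ref{eq:hot},
\[
\text{h.o.t}\leq Ch^s\left(\|\boldsymbol{\rho}-\boldsymbol{\rho}_h\|_{0,\O}+\|\bu-\bu_h\|_{0,\O}\right)+\|\bu-\Theta_h\bu\|_{0,\O}.
\]
Substituting all three bounds into the inequality from Lemma \ref{lema_cota_s} yields
\[
\|\boldsymbol{\rho}-\boldsymbol{\rho}_{h}\|_{\bdiv,\O}+\|\bu-\bu_{h}\|_{0,\O}\leq C\left(\left\{\sum_T\eta_T^2\right\}^{1/2}+h^s\big(\|\boldsymbol{\rho}-\boldsymbol{\rho}_h\|_{0,\O}+\|\bu-\bu_h\|_{0,\O}\big)+\|\bu-\Theta_h\bu\|_{0,\O}\right).
\]
Here comes the only genuinely delicate point, which I expect to be the main obstacle: absorbing the $h^s(\|\boldsymbol{\rho}-\boldsymbol{\rho}_h\|_{0,\O}+\|\bu-\bu_h\|_{0,\O})$ term into the left-hand side. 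Since $\|\boldsymbol{\rho}-\boldsymbol{\rho}_h\|_{0,\O}\leq\|\boldsymbol{\rho}-\boldsymbol{\rho}_h\|_{\bdiv,\O}$, for $h$ small enough (say $h<h_0$ with $Ch_0^s\leq 1/2$) the term $Ch^s(\|\boldsymbol{\rho}-\boldsymbol{\rho}_h\|_{\bdiv,\O}+\|\bu-\bu_h\|_{0,\O})$ can be moved to the left and absorbed, leaving
\[
\|\boldsymbol{\rho}-\boldsymbol{\rho}_{h}\|_{\bdiv,\O}+\|\bu-\bu_{h}\|_{0,\O}\leq C\left(\left\{\sum_T\eta_T^2\right\}^{1/2}+\|\bu-\Theta_h\bu\|_{0,\O}\right),
\]
which is the first asserted estimate. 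The role of $h_0$ and the uniformity of $C$ in $\lambda$ is precisely what Assumption and the $\lambda$-robust estimates cited throughout (Lemmas \ref{lema:apriorie}, \ref{lmm:super}, \ref{lm_a1}, \ref{lm_a2}) guarantee, so no new work is needed there.

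Finally, for the eigenvalue bound, I would invoke the second estimate of Lemma \ref{lema:apriorie},
\[
|\kappa-\kappa_h|\leq C\left(\|\boldsymbol{\rho}-\boldsymbol{\rho}_h\|_{0,\O}^2+\|\bu-\bu_h\|_{0,\O}^2\right)\leq C\left(\|\boldsymbol{\rho}-\boldsymbol{\rho}_h\|_{\bdiv,\O}+\|\bu-\bu_h\|_{0,\O}\right)^2,
\]
and then square the displacement/pseudostress reliability bound just obtained, using $(a+b)^2\leq 2a^2+2b^2$ to get
\[
|\kappa-\kappa_h|\leq C\left(\sum_T\eta_T^2+\|\bu-\Theta_h\bu\|_{0,\O}^2\right).
\]
This closes the proof. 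The whole argument is essentially a bookkeeping assembly; the one step requiring care is the small-$h$ absorption, and it is routine once one notes $\|\cdot\|_{0,\O}\leq\|\cdot\|_{\bdiv,\O}$.
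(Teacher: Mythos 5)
Your proposal is correct and follows essentially the same route as the paper, which states this proposition as a direct consequence of Lemma \ref{lema_cota_s}, Remark \ref{eq:hot}, identity \eqref{eq:sup}, Lemmas \ref{lm_a1} and \ref{lm_a2}, and the a priori eigenvalue bound of Lemma \ref{lema:apriorie}. Your explicit small-$h$ absorption of the term $Ch^{s}\bigl(\|\boldsymbol{\rho}-\boldsymbol{\rho}_h\|_{0,\O}+\|\bu-\bu_h\|_{0,\O}\bigr)$ is exactly the (implicit) origin of the threshold $h_0$ in the statement, so nothing is missing.
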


\subsection{Efficiency}
The aim of this section is to obtain a lower bound for the local indicator \eqref{eq:local_eta}. To do this task, we will apply
the localization technique based in bubble functions, together with inverse inequalities. In order to present the material, the efficiency 
will be proved in several steps, where each one of these correspond to one of the terms of \eqref{eq:local_eta}.

We begin by invoking the following result, proved in \cite[Lemma 4.3]{MR2293249} and \cite[Lemma 4.9]{MR3453481} for the two and three dimensional cases, respectively.
\begin{lemma}\label{lmm:invcurl}
Let $\btau_h\in\mathbb{L}^2(\O)$ be a piecewise polynomial of degree $k\geq 0$ on each $T\in\mathcal{T}_h$ such that approximates $\btau\in\mathbb{L}^2(\O)$, where  $\mathbf{curl}(\btau)=\boldsymbol{0}$ on each $T\in\mathcal{T}_h$. Then, there exists  a positive constant $C$, independent of $h$ and $\lambda$, such that 
\begin{equation*}
\|\mathbf{curl}(\btau_h)\|_{0,T}\leq C h_T^{-1}\|\btau-\btau_h\|_{0,T}\quad \forall T\in\mathcal{T}_h.
\end{equation*}
\end{lemma}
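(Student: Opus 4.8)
The plan is to prove the auxiliary estimate $\|\curl(\btau_h)\|_{0,T}\leq C h_T^{-1}\|\btau-\btau_h\|_{0,T}$ by the standard bubble-function localization argument, carried out elementwise. Since $\curl(\btau_h)|_T$ is a piecewise polynomial, I would first apply the interior bubble function $\psi_T$ associated to $T$ and the norm-equivalence from Lemma \ref{lmm:bubble_estimates}, namely
\begin{equation*}
\|\curl(\btau_h)\|_{0,T}^2\leq C\,\|\psi_T^{1/2}\curl(\btau_h)\|_{0,T}^2 = C\int_T \psi_T\,\curl(\btau_h):\curl(\btau_h).
\end{equation*}
Then I would use the hypothesis $\curl(\btau)=\boldsymbol{0}$ on $T$ to insert $\btau-\btau_h$, writing $\curl(\btau_h)=\curl(\btau_h-\btau)$ in one factor, so that
\begin{equation*}
\|\curl(\btau_h)\|_{0,T}^2\leq C\int_T \psi_T\,\curl(\btau_h):\curl(\btau_h-\btau).
\end{equation*}

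The second step is an integration by parts: the vector field $\psi_T\curl(\btau_h)$ has support compactly contained in $T$ (since $\psi_T=0$ on $\partial T$), so no boundary terms appear, and I can move the $\curl$ off the $\btau_h-\btau$ factor onto $\psi_T\curl(\btau_h)$, obtaining
\begin{equation*}
\int_T \psi_T\,\curl(\btau_h):\curl(\btau_h-\btau) = \pm\int_T \curl\bigl(\psi_T\curl(\btau_h)\bigr):(\btau_h-\btau)
\end{equation*}
(with the appropriate sign/transpose bookkeeping dictated by the tensorial $\curl$ and its formal adjoint). Applying Cauchy--Schwarz gives the bound by $\|\curl(\psi_T\curl(\btau_h))\|_{0,T}\|\btau-\btau_h\|_{0,T}$. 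The third step is to absorb the first factor: since $\psi_T\curl(\btau_h)$ is a polynomial on $T$, the inverse inequality of Lemma \ref{inversein} yields $\|\curl(\psi_T\curl(\btau_h))\|_{0,T}\leq C h_T^{-1}\|\psi_T\curl(\btau_h)\|_{0,T}\leq C h_T^{-1}\|\curl(\btau_h)\|_{0,T}$, using $0\le\psi_T\le1$. Combining the three steps gives
\begin{equation*}
\|\curl(\btau_h)\|_{0,T}^2\leq C h_T^{-1}\|\curl(\btau_h)\|_{0,T}\,\|\btau-\btau_h\|_{0,T},
\end{equation*}
and dividing by $\|\curl(\btau_h)\|_{0,T}$ (the estimate being trivial when this vanishes) yields the claim.

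I expect the only genuinely delicate point to be the bookkeeping for the \emph{tensorial} rot/curl operator and its formal adjoint in the integration-by-parts step — making sure the pairing $\curl(\btau_h):\curl(\btau_h-\btau)$ really is dual to $\curl\bigl(\psi_T\curl(\btau_h)\bigr):(\btau_h-\btau)$ row-by-row, with the correct signs and any transpose, and that the vanishing of $\psi_T$ on $\partial T$ indeed kills all boundary contributions in each component. Everything else (bubble norm-equivalence, inverse inequality, Cauchy--Schwarz, shape-regularity constants) is routine and uniform in $\lambda$ because no $\lambda$-dependent quantity enters; the constant depends only on $k$ and the shape regularity of $\mathcal{T}_h$. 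The two- and three-dimensional cases are handled identically modulo the meaning of $\curl$ in each dimension, which is why the statement is attributed to the two references \cite{MR2293249,MR3453481}.
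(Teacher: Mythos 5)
Your argument is correct and is essentially the same bubble-function localization proof found in the references the paper cites for this lemma (\cite[Lemma 4.3]{MR2293249} and \cite[Lemma 4.9]{MR3453481}); the paper itself gives no proof beyond those citations. The one point you flag as delicate --- the row-wise integration by parts with the tensorial $\curl$ and the vanishing of $\psi_T$ on $\partial T$ --- is handled exactly as you describe, and the hypothesis $\curl(\btau)=\boldsymbol{0}$ elementwise guarantees $\btau|_T\in\mathbb{H}(\curl,T)$ so that the integration by parts is legitimate even though $\btau$ is only $\mathbb{L}^2$ globally.
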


Now our task is to bound each of the contributions of $\eta_T$ in \eqref{eq:local_eta}. We begin with   the term 
$$h_T^2\left\|\nabla\bu_h-\frac{1}{\mu}\left\{ \boldsymbol{\rho}_h-\frac{\lambda+\mu}{n\lambda+(n+1)\mu}\tr(\boldsymbol{\rho}_h)\mathbb{I} \right\} \right\|^2_{0,T}.$$
Given  an element $T\in\CT_h$, let us define $\Upsilon_T:=\nabla\bu_h-\boldsymbol{\chi}_h$ where
\begin{equation*}
\boldsymbol{\chi}_h:=\frac{1}{\mu}\left\{ \boldsymbol{\rho}_h-\frac{\lambda+\mu}{n\lambda+(n+1)\mu}\tr(\boldsymbol{\rho}_h)\mathbb{I} \right\}.
\end{equation*}
Setting 
$$\boldsymbol{\chi}:=\frac{1}{\mu}\left\{ \boldsymbol{\rho}-\frac{\lambda+\mu}{n\lambda+(n+1)\mu}\tr(\boldsymbol{\rho})\mathbb{I} \right\},$$
and using  the relations $\Vert \tr(\boldsymbol{\rho})\Vert_{0,T}\leq \sqrt{n}\Vert\boldsymbol{\rho}\Vert_{0,T}$ and $\frac{\lambda+\mu}{n\lambda+(n+1)\mu}<\frac{1}{n}$, we obtain
$$
\begin{aligned}
\Vert \boldsymbol{\chi}-\boldsymbol{\chi}_h\Vert_{0,T}&\leq\frac{1}{\mu}\left\{\Vert \boldsymbol{\rho}- \boldsymbol{\rho}_{h}\Vert_{0,T}+ \frac{\lambda+\mu}{n\lambda+(n+1)\mu}\Vert\tr(\boldsymbol{\rho}-\boldsymbol{\rho}_h)\Vert_{0,T}\right\}\\
&\leq \frac{1}{\mu}\left\{\Vert \boldsymbol{\rho}- \boldsymbol{\rho}_{h}\Vert_{0,T}+ \frac{\sqrt{n}}{n}\Vert\boldsymbol{\rho}-\boldsymbol{\rho}_h\Vert_{0,T}\right\}\\
&=\frac{n+\sqrt{n}}{n\mu}\Vert \boldsymbol{\rho}- \boldsymbol{\rho}_{h}\Vert_{0,T}.
\end{aligned}
$$
%
%


Since $\nabla\bu=\boldsymbol{\chi}$ and invoking the bubble function $\psi_T$  defined in subsection \ref{sub:bubbles} we have
\begin{align*}
\|\Upsilon_T\|_{0,T}^2&\leq C\|\psi_T^{1/2}\Upsilon_T\|_{0,T}^2=C\int_T\psi_T\Upsilon_T:(\nabla(\bu_h-\bu)+(\boldsymbol{\chi}-\boldsymbol{\chi}_h))\\
&=C\left\{\int_T\bdiv(\psi_T\Upsilon_T)\cdot(\bu-\bu_h)+\int_T\psi_T\Upsilon_T:(\boldsymbol{\chi}-\boldsymbol{\chi}_h) \right\}\\
&\leq C\|\bdiv(\psi_T\Upsilon_T)\|_{0,T}\|\bu-\bu_h\|_{0,T}+\|\psi_T\Upsilon_T\|_{0,T}\|\boldsymbol{\chi}-\boldsymbol{\chi}_h\|_{0,T}\\
&\leq C\big\{ h_T^{-1}\|\bu-\bu_h\|_{0,T}+\|\boldsymbol{\chi}-\boldsymbol{\chi}_h\|_{0,T} \big\}\|\Upsilon_T\|_{0,T}\\
&\leq C\left\{ h_T^{-1}\|\bu-\bu_h\|_{0,T}+\frac{n+\sqrt{n}}{n\mu}\|\boldsymbol{\rho}-\boldsymbol{\rho}_h\|_{0,T} \right\}\|\Upsilon_T\|_{0,T},
\end{align*}
where we have used integration by parts,  Cauchy-Schwarz inequality, Lemmas \ref{lmm:bubble_estimates} and \ref{inversein}, and the properties of $\psi_T$ given by Lemma \ref{lmm:bubble_estimates}. Hence
\begin{equation}\label{eq:effi_1}
h_T^2\left\|\nabla\bu_h\hspace{-0.04cm}-\hspace{-0.04cm}\frac{1}{\mu}\left\{ \boldsymbol{\rho}_h\hspace{-0.04cm}-\hspace{-0.04cm}\frac{\lambda+\mu}{n\lambda+(n+1)\mu}\tr(\boldsymbol{\rho}_h)\mathbb{I} \right\} \right\|^2_{0,T}\leq\hspace{-0.04cm} C\left\{\|\bu\hspace{-0.04cm}-\hspace{-0.04cm}\bu_h\|_{0,T}^2\hspace{-0.04cm}+\hspace{-0.04cm}\|\boldsymbol{\rho}\hspace{-0.04cm}-\hspace{-0.04cm}\boldsymbol{\rho}_h\|_{0,T}^2 \right\},
\end{equation}
where the constant $C$ is independent of $h$ and $\lambda$.

Now, following the proof of \cite[Lemma 4.11]{MR3453481} we can prove that 

\begin{equation*}\label{eq:effi_2}
h_T^2\left\|\curl\left(\frac{1}{\mu}\left\{\rho_h- \frac{\lambda+\mu}{n\lambda+(n+1)\mu}\tr(\rho_h)\mathbb{I}  \right\} \right)  \right\|_{0,T}^2\leq \hat{C}\|\boldsymbol{\rho}-\boldsymbol{\rho}_h\|_{0,T}^2\\
\end{equation*}
and
\begin{equation*}
\label{eq:effi_3}
h_e\left\|\jump{ \frac{1}{\mu}\left\{\boldsymbol{\rho}_h- \frac{\lambda+\mu}{n\lambda+(n+1)\mu}\tr(\boldsymbol{\rho}_h)\mathbb{I} \right\}\times\bn}\right\|_{0,e}^2\leq C\|\boldsymbol{\rho}-\boldsymbol{\rho}_h\|_{0,\omega_e}^2,
\end{equation*}
for all  $e\in \mathcal{E}_{h}(\O)$, and the constants $\hat{C}$ and $C$ are independent of $h$ and $\lambda$.

The following step is to bound the boundary term of the estimator  $\eta$.
	Given $e\in\mathcal{E}_{h}(\partial \O)$, let us define $\Upsilon_e:=\boldsymbol{\chi}_h\times\bn$. Then, applying Lemma \ref{lmm:bubble_estimates} and the extension operator $L$, we obtain
	\begin{align*}
		\|\Upsilon_e\|_{0,e}^2&\leq C\|\psi_e^{1/2}\Upsilon_e\|_{0,e}^2=C\int_e\psi_e\Upsilon_e:\boldsymbol{\chi}_h\times\bn\\
		&=C\left(\int_{\partial T_e}\psi_eL(\Upsilon_e):(\boldsymbol{\chi}_h-\boldsymbol{\chi})\times\bn+\int_e\psi_e\Upsilon_e:\boldsymbol{\chi}\times\bn\right).
	\end{align*}
	Since $\bu=\boldsymbol{0}$ on $\partial\Omega$, then $\nabla u_i$ is parallel to $\bn$ on $e$. Hence, using that $\nabla \bu=\boldsymbol{\chi}$, we have $\boldsymbol{\chi}\times \bn=\boldsymbol{0}$. This fact, together with integration by parts, allow to obtain
		\begin{align*}
			\|\Upsilon_e\|_{0,e}^2&\leq C\int_{\partial T_e}\psi_eL(\Upsilon_e):(\boldsymbol{\chi}_h-\boldsymbol{\chi})\times\bn.\\
			&= C\left(\int_{T_e}\psi_eL(\Upsilon_e):\curl\left(\boldsymbol{\chi}_h\right)+\int_{T_e}\left(\boldsymbol{\chi}-\boldsymbol{\chi}_h\right):\curl\left(\psi_eL(\Upsilon_e)\right)\right)\\
			&\leq C\big(\|\psi_eL(\Upsilon_e)\|_{0,T_e}\|\curl\left(\boldsymbol{\chi}_h\right)\|_{0,T_e}\hspace*{-0.05cm}+\hspace*{-0.05cm}\|\boldsymbol{\chi}-\boldsymbol{\chi}_h\|_{0,T_e}\|\curl\left(\psi_eL(\Upsilon_e)\right)\|_{0,T_e}\big)\\
			&\leq C h_e^{-1/2}\|\boldsymbol{\chi}-\boldsymbol{\chi}_h\|_{0,T_e}\|\Upsilon_e\|_{0,e},
	\end{align*}
	where we have used Lemma \ref{lmm:bubble_estimates}, Lemma \ref{inversein}, and Lemma \ref{lmm:invcurl}. Thus, we have  proved the estimate
	\begin{equation}\label{eq:effi_3}
		h_e\left\| \frac{1}{\mu}\left\{\boldsymbol{\rho}_h- \frac{\lambda+\mu}{n\lambda+(n+1)\mu}\tr(\boldsymbol{\rho}_h)\mathbb{I} \right\}\times\bn\right\|_{0,e}^2\leq C\|\boldsymbol{\rho}-\boldsymbol{\rho}_h\|_{0,T_e}^2\quad \forall e\in \mathcal{E}_{h}(\partial \O).
	\end{equation}

Finally, for the  term $\|\Theta_h\bu_h-\bu_h\|_{0,T}^2$, we add and subtract $\Theta_h\bu$ and $\bu$, apply triangle inequality, and Lemma \ref{postprocessing}, leading to 	
\begin{equation}
	\label{eq:super22}
	\|\Theta_h\bu_h-\bu_h\|_{0,T}^2\leq C\left(\|\bu-\bu_h\|_{0,T}^2+\|\Theta_h\bu_h-\Theta_h\bu\|_{0,T}^2+\|\Theta_h\bu-\bu\|_{0,T}^2\right).
\end{equation}
Note that the last term of \eqref{eq:super22} is asymptotically negligible thanks to Lemma \ref{postprocessing}.

Gathering the previous results, namely \eqref{eq:effi_1}--\eqref{eq:super22}, we are in a position to establish the efficiency $\eta$, which is stated in the following result.
\begin{theorem}[Efficiency]
There exists a constant $C>0$, independent of $h$ and $\lambda$ such that
$$
\eta^2:=\sum_{T\in\CT_{h}}\eta_{T}^{2} \leq C\left( \|\bu-\bu_h\|_{0,\Omega}^2 + \|\boldsymbol{\rho}-\boldsymbol{\rho}_h\|_{0,\Omega}^2+\text{h.o.t}\right).
$$
\end{theorem}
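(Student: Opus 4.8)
The plan is to establish the bound term by term, since the global estimator $\eta^2=\sum_{T}\eta_T^2$ is a sum of five local contributions and each one has already been treated above. The overall strategy is the standard bubble-function localization argument for mixed methods: for each residual-type term appearing in $\eta_T$, use the equivalence of norms on finite-dimensional polynomial spaces (Lemma \ref{lmm:bubble_estimates}), test against an appropriately scaled bubble, integrate by parts to bring in the continuous equations \eqref{def:elast_system_rho}, and then control the resulting consistency terms by the errors $\|\bu-\bu_h\|_{0,T}$ and $\|\boldsymbol{\rho}-\boldsymbol{\rho}_h\|_{0,T}$ together with inverse inequalities (Lemma \ref{inversein}).

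First I would recall the bound \eqref{eq:effi_1} for the volumetric term $h_T^2\|\nabla\bu_h-\boldsymbol{\chi}_h\|_{0,T}^2$, where $\boldsymbol{\chi}_h=\frac1\mu\{\boldsymbol{\rho}_h-\frac{\lambda+\mu}{n\lambda+(n+1)\mu}\tr(\boldsymbol{\rho}_h)\mathbb{I}\}$; this follows from the triangle-bubble argument exactly as displayed, using $\nabla\bu=\boldsymbol{\chi}$, the orthogonality $\|\boldsymbol{\chi}-\boldsymbol{\chi}_h\|_{0,T}\le \frac{n+\sqrt n}{n\mu}\|\boldsymbol{\rho}-\boldsymbol{\rho}_h\|_{0,T}$, and the fact that $\bu_h$ is piecewise constant so $\bdiv(\psi_T\Upsilon_T)$ pairs against $\bu-\bu_h$. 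Next I would invoke the curl term and interior jump term, which are bounded by $C\|\boldsymbol{\rho}-\boldsymbol{\rho}_h\|_{0,T}^2$ and $C\|\boldsymbol{\rho}-\boldsymbol{\rho}_h\|_{0,\omega_e}^2$ respectively, by following the proofs of \cite[Lemma 4.11]{MR3453481} verbatim — here the key inputs are $\curl(\boldsymbol{\chi})=\curl(\nabla\bu)=\boldsymbol{0}$ so Lemma \ref{lmm:invcurl} applies, and the jump of $\boldsymbol{\chi}$ across interior faces vanishes. Then I would use the boundary-face bound \eqref{eq:effi_3}, which crucially exploits that $\bu=\boldsymbol{0}$ on $\partial\Omega$ forces $\nabla u_i\parallel\bn$, hence $\boldsymbol{\chi}\times\bn=\boldsymbol{0}$ on $e\subseteq\partial\Omega$, so the edge-bubble argument reduces to the consistency term $\|\boldsymbol{\chi}-\boldsymbol{\chi}_h\|_{0,T_e}$. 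Finally the post-processing term is handled by \eqref{eq:super22}: adding and subtracting $\Theta_h\bu$ and $\bu$, using the boundedness of $\Theta_h$ (Lemma \ref{postprocessing}(3)) on $\Theta_h\bu_h-\Theta_h\bu=\Theta_h(\bu_h-\bu)$, leaves $\|\bu-\bu_h\|_{0,T}^2$ plus the genuinely higher-order term $\|\Theta_h\bu-\bu\|_{0,T}^2$ which is absorbed into $\text{h.o.t}$.

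To conclude, I would sum all five contributions over $T\in\mathcal{T}_h$, using that the patches $\omega_e$ and $\omega_{T_e}$ appearing on the right-hand side overlap only a bounded number of times by shape regularity, so $\sum_T\|\boldsymbol{\rho}-\boldsymbol{\rho}_h\|_{0,\omega_e}^2\le C\|\boldsymbol{\rho}-\boldsymbol{\rho}_h\|_{0,\Omega}^2$, and likewise for the boundary terms. This yields
$$
\eta^2\le C\left(\|\bu-\bu_h\|_{0,\Omega}^2+\|\boldsymbol{\rho}-\boldsymbol{\rho}_h\|_{0,\Omega}^2+\text{h.o.t}\right),
$$
with $C$ independent of $h$ and $\lambda$, the $\lambda$-independence being inherited from the uniform bound $\frac{\lambda+\mu}{n\lambda+(n+1)\mu}<\frac1n$ used throughout and from the $\lambda$-independent constants in Lemmas \ref{lmm:invcurl} and \ref{lmm:bubble_estimates}.

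The main obstacle I anticipate is not any single estimate — each is routine given the cited lemmas — but rather the bookkeeping of the $\lambda$-uniformity: one must verify at every step that the deviatoric-type combination $\boldsymbol{\chi}_h$ is the right quantity to localize (so that the potentially large factor multiplying $\tr(\boldsymbol{\rho}_h)$ never appears un-bounded), and that the continuous identity $\nabla\bu=\boldsymbol{\chi}$ rather than a Lam\'e-weighted version is what enters the integration by parts. The second delicate point is the boundary term: the argument that $\boldsymbol{\chi}\times\bn=\boldsymbol{0}$ on $\partial\Omega$ must be justified componentwise from $\bu|_{\partial\Omega}=\boldsymbol{0}$, and one needs the extension operator $L$ from Lemma \ref{lmm:bubble_estimates} together with Lemma \ref{lmm:invcurl} applied on the single element $T_e$ abutting the boundary face.
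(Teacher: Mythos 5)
Your proposal is correct and follows essentially the same route as the paper: the paper's proof of the efficiency theorem is simply the combination of the term-by-term bounds \eqref{eq:effi_1}--\eqref{eq:super22} established immediately before it (volumetric residual via the triangle bubble, curl and interior jump terms via \cite[Lemma 4.11]{MR3453481}, boundary term via the edge bubble and $\boldsymbol{\chi}\times\bn=\boldsymbol{0}$, and the postprocessing term via Lemma \ref{postprocessing}), summed over the triangulation. Your additional remarks on finite overlap of the patches and on tracking the $\lambda$-uniformity through the factor $\frac{\lambda+\mu}{n\lambda+(n+1)\mu}<\frac{1}{n}$ are consistent with, and make explicit, what the paper leaves implicit.
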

\begin{proof}
The proof is is a consequence of \eqref{eq:effi_1}--\eqref{eq:super22} and Lemma \ref{postprocessing}.
\end{proof}
\begin{remark}
	Notice that all our analysis has been performed considering the estimator  $\eta$ defined in \eqref{eq:global_est}. However, these computations are straightforward when the limit 
	estimator $\eta_{\infty}$ is considered.
\end{remark}


\section{Numerical experiments}
\label{sec:numerics}

In this section we report numerical tests in order to assess the performance of the devised estimators 
$\eta$ and $\eta_{\infty}$ defined in \eqref{eq:global_est} and \eqref{eq:global_est_limit}, respectively.
All the reported results  have been obtained with a FEniCS code \cite{MR3618064}, considering the meshes that this software provides.
We recall that the Lamé coefficients for the elasticity equations are defined by
	$$
	\lambda:=\frac{E\nu}{(1+\nu)(1-2\nu)}\quad\text{and}\quad\mu:=\frac{E}{2(1+\nu)},
	$$
	where $\nu$ is the Poisson ratio and $E$ is the Young's modulus.

In the elasticity test, the Poisson ratio $\nu$ is allowed to take different values.  To make matters precise, if $\nu\rightarrow 1/2$, the Lam\'e constant $\lambda\rightarrow\infty$
	and hence, the definition of $a(\cdot,\cdot)$ changes to $a_{\infty}(\cdot,\cdot)$ as we have claimed in subsection \ref{subsec:limit}. For simplicity,  we will denote the indicators simply by $\eta$ for both cases, $\nu\neq 1/2$ and $\nu_\infty=1/2$. Additionally, the experiments consider a Young's modulus $E = 1$, the boundary condition $\bu=0$, and the lowest order polynomial degree $k=0$.

Throughout this section, we denote by $N$ the number of degrees of freedom, i.e., $N:=\dim(\mathbb{H}\times\mathbf{Q})$. We also set $\omega:=\sqrt{\kappa}$ as the eigenfrequency and $\err(\omega_i)$ denotes the error on the $i$-th eigenfrequency with 
$$
\err(\omega_i):=\vert \omega_{h_i}-\omega_{i}\vert,
$$
whereas the effectivity indexes with respect to $\eta$ and the eigenfrequency $\omega_i$ is defined by
$$
\eff(\omega_i):=\frac{\err(\omega_i)}{\eta^2}.
$$
Here, an exact solution $\omega_i$ will be defined as those accurate values of frequencies that are calculated by extrapolation through the least squares fit of the model
$$
\omega_{hi}\approx \omega_{i} + C_ih^{\alpha_i}.
$$

In order to apply the adaptive finite element method, we shall generate a sequence of nested conforming triangulations using the loop
\begin{center}
	\Large \textrm{solve $\rightarrow$ estimate $\rightarrow$ mark $\rightarrow$ refine,} 
\end{center}
based on \cite{verfuhrt1996}:
\begin{enumerate}
	\item Set an initial mesh $\CT_{h}$.
	\item Solve \eqref{def:spectral_1} in the actual mesh to obtain $\omega_{h}$ and $(\omega_{h},\rho_{h},\bu_{h})$. 	
	\item Compute $\eta_T$ for each $T\in\CT_{h}$ using the eigenfunctions $(\rho_{h},\bu_{h})$. 
	\item Use the maximal marking strategy to refine each $T'\in \CT_{h}$ whose indicator $\eta_{T'}$ satisfies
	$$
	\eta_{T'}\geq \beta\max\{\eta_{T}\,:\,T\in\CT_{h} \},
	$$
	for some $\beta\in(0,1)$.
	\item Set $\CT_{h}$ as the actual mesh and go to step 2.
\end{enumerate}

The refinement algorithm is the one implemented by Fenics through the command \texttt{refine}, which implements Plaza and Carey's algorithms for 2D and 3D geometries. The algorithms use local refinement of simplicial grids based on the skeleton.
			\subsection{Test 1: 3D L-shaped domain} This experiment considers a non-convex polygonal domain with a singularity along an axis. We set
		$$
		\Omega:=(-1,1)\times(-1,1)\times(-1,0)\backslash\bigg((-1,0)\times(-1,0)\times(-1,0) \bigg),
		$$
		which corresponds to a 3D L-shaped domain. The initial mesh of the domain, depicted in Figure \ref{fig:l-3d}, is considered for both, uniform and adaptive refinements.
		\begin{figure}[h]
			\centering
			\includegraphics[scale=0.11,trim=22cm 4.2cm 21cm 2.4cm,clip]{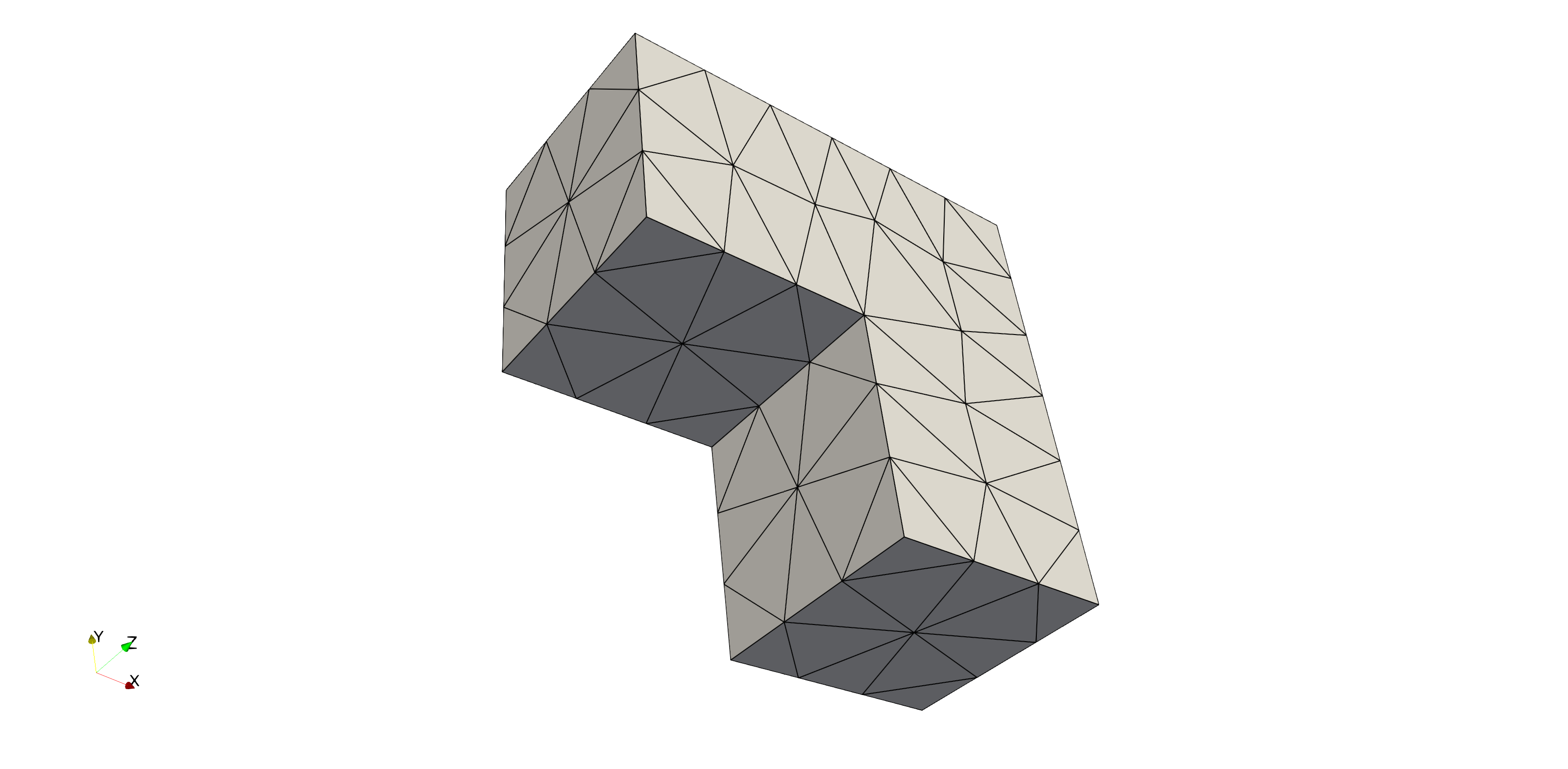}
			\caption{Test 1. The initial three dimensional L-shaped mesh.}
			\label{fig:l-3d}
		\end{figure}
		Since there are no exact eigenvalues for this geometry, we proceed to find an accurate value by means of sufficiently refined uniform meshes and a least-squares fitting. The extrapolated eigenvalues, corresponding to different values of $\nu$ are presented in Table \ref{table: l-3d-tabla-de-extrapolados}. 
		\begin{table}[H]
			\begin{center}
				\begin{tabular}{c|c}
					$\nu$ & $\omega_1$\\\midrule
					0.35 & 3.01757\\
					0.49 & 3.73062 \\
					0.5 & 3.73364 \\
				\end{tabular}
			\end{center}
			\caption{Test 1. Lowest computed eigenvalues using highly refined meshes and least square fitting in the three dimensional L-shaped domain.}
			\label{table: l-3d-tabla-de-extrapolados}
		\end{table}
		In Figure \ref{fig:lshape-3d-error} we report the error curves obtained using uniform and adaptive meshes. The fit line in the uniform refinements shows that the selected values are appropriate to be considered as "accurate" in the calculations performed. The slope of the fitted line on the uniform refined meshes is $-0.40$ ($\nu=0.35$ and $\nu=0.49$), while the slope for $\nu=0.5$ is $-0.42$. This indicates that the errors of the eigenfrequencies computed with uniform meshes satisfy $\vert \omega_1-\omega_{1h}\vert\approx CN^{-0.40}=Ch^{2s}$, with $s=0.6$ ($\nu=0.35$ and $\nu=0.49$), and $\vert \omega_1-\omega_{1h}\vert\approx CN^{-0.42}=Ch^{2s}$, with $s=0.63$ ($\nu=0.5$). Figure \ref{fig:lshape-3d-error} (top) also shows that the eigenfrequencies computed with the adaptive refinement converge to the "exact" eigenfrequency with a higher order of convergence than those obtained with uniform refinement. In this case, the slope of the fitted lines obtained are $-0.65$ ($\nu=0.35$) and $-0.64$ ($\nu=0.49$ and $\nu=0.5$). This implies that the errors bounds behaves like $\mathcal{O}(N^{-0.65})\simeq\mathcal{O}(h^{2s})$, with $s\approx0.98$ ($\nu=0.35$), $\mathcal{O}(N^{-0.64})\simeq\mathcal{O}(h^{2s})$, with $s\approx0.96$ ($\nu=0.49$ and $\nu=0.5$), which shows that the estimator is able to recover the optimal order of convergence for this singular eigenfrequency. Moreover, Figure \ref{fig:lshape-3d-error} (bottom left and bottom right) shows that the square of the estimator behaves like $\mathcal{O}(N^{-2/3})$, hence the effectivity $\mathtt{eff}(\omega_1)$ remains bounded above and below away from zero. 
		
		Note that the singularity is along $(0,y,0)$, and the proposed estimator is able to detect it and refine near this zone. This is shown in Figures \ref{fig:l-3d-nu035-mallas} and \ref{fig:l-3d-nu050-mallas}, where we present different intermediate meshes in the adaptive refinement process for the selected values of $\nu$. 
		
		\begin{figure}
			\centering
			\begin{minipage}{\linewidth}\centering
				\includegraphics[scale=0.4]{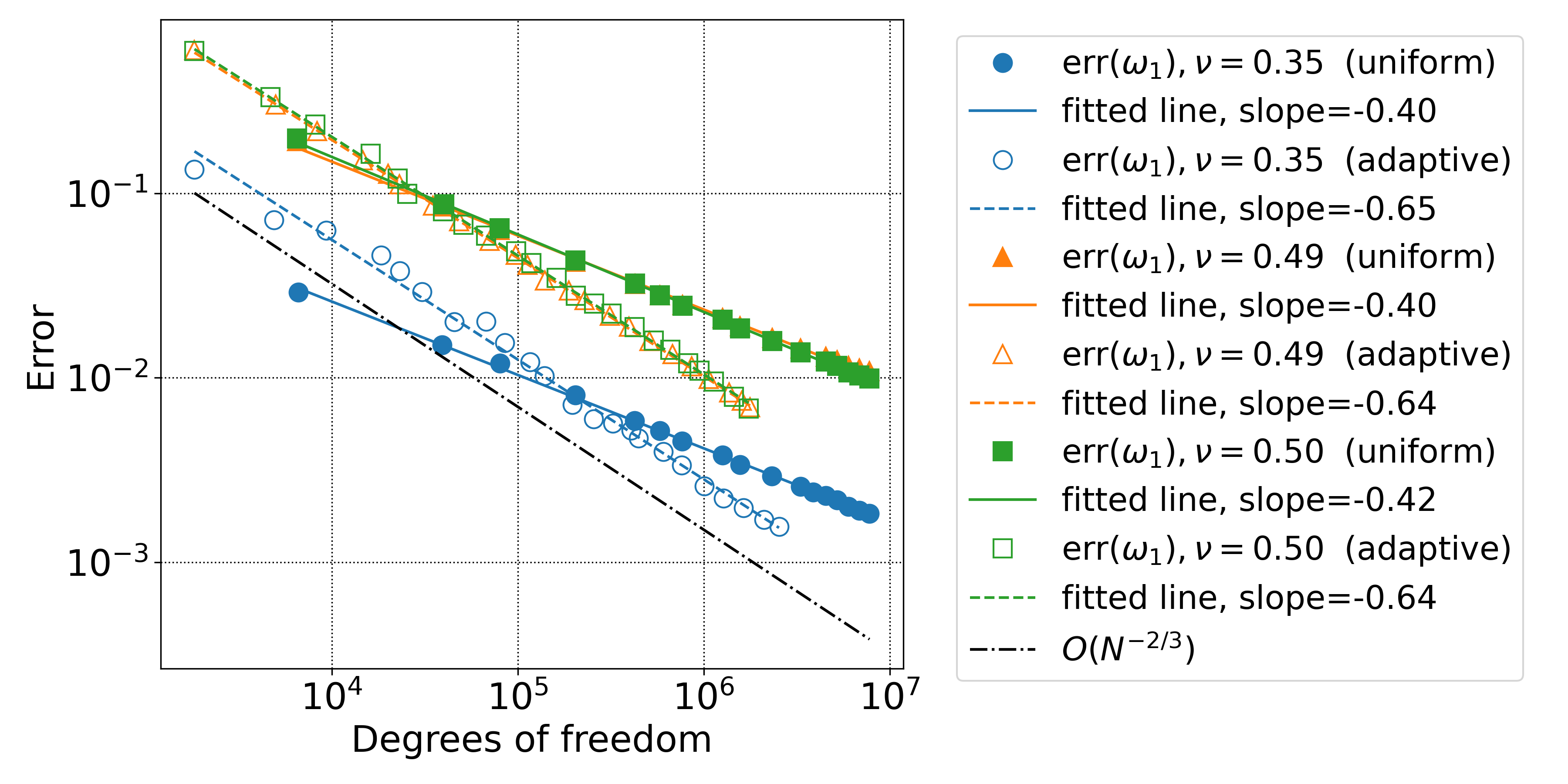}
			\end{minipage}\\
			\begin{minipage}{0.49\linewidth}\centering
				\includegraphics[scale=0.35]{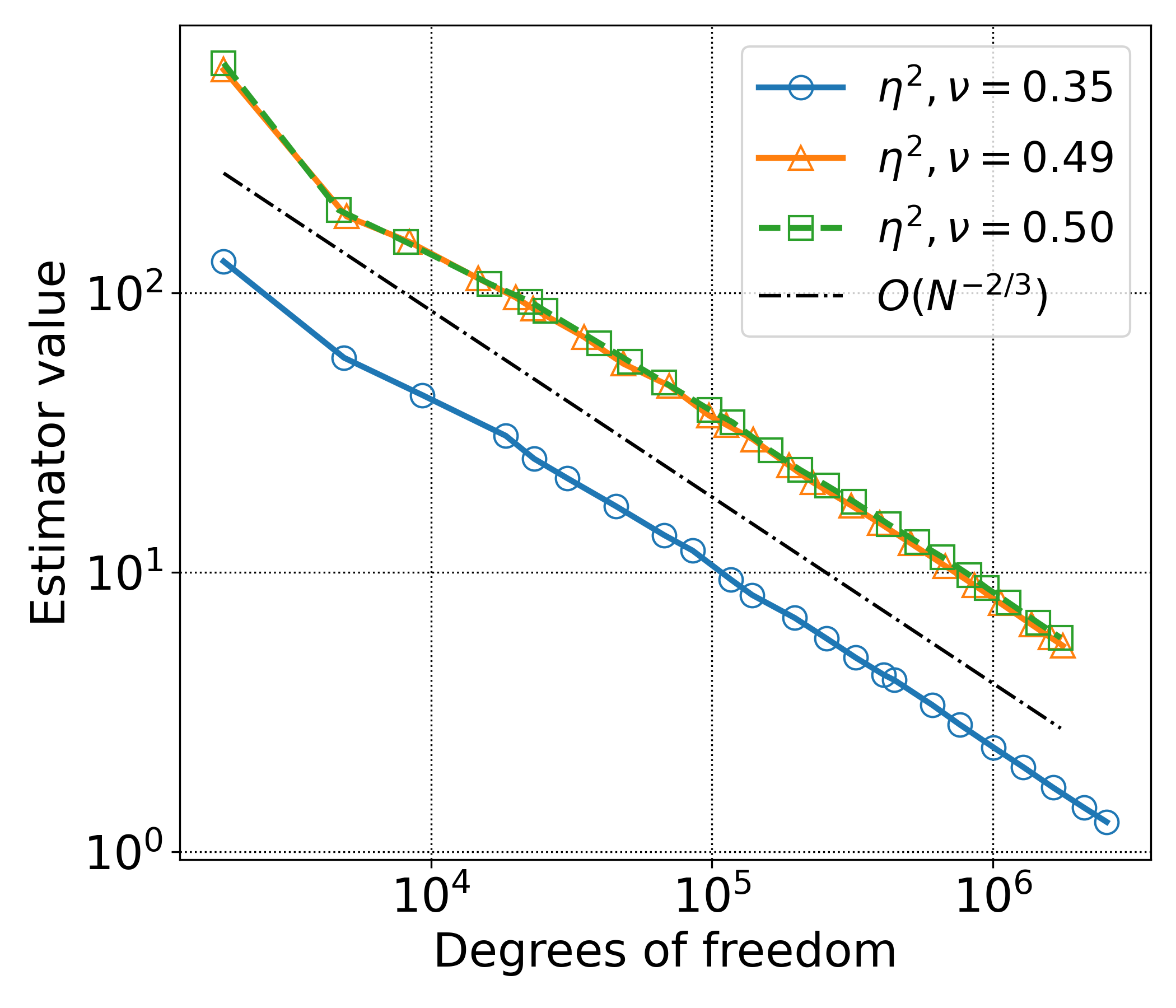}
			\end{minipage}
			\begin{minipage}{0.49\linewidth}\centering
				\includegraphics[scale=0.35]{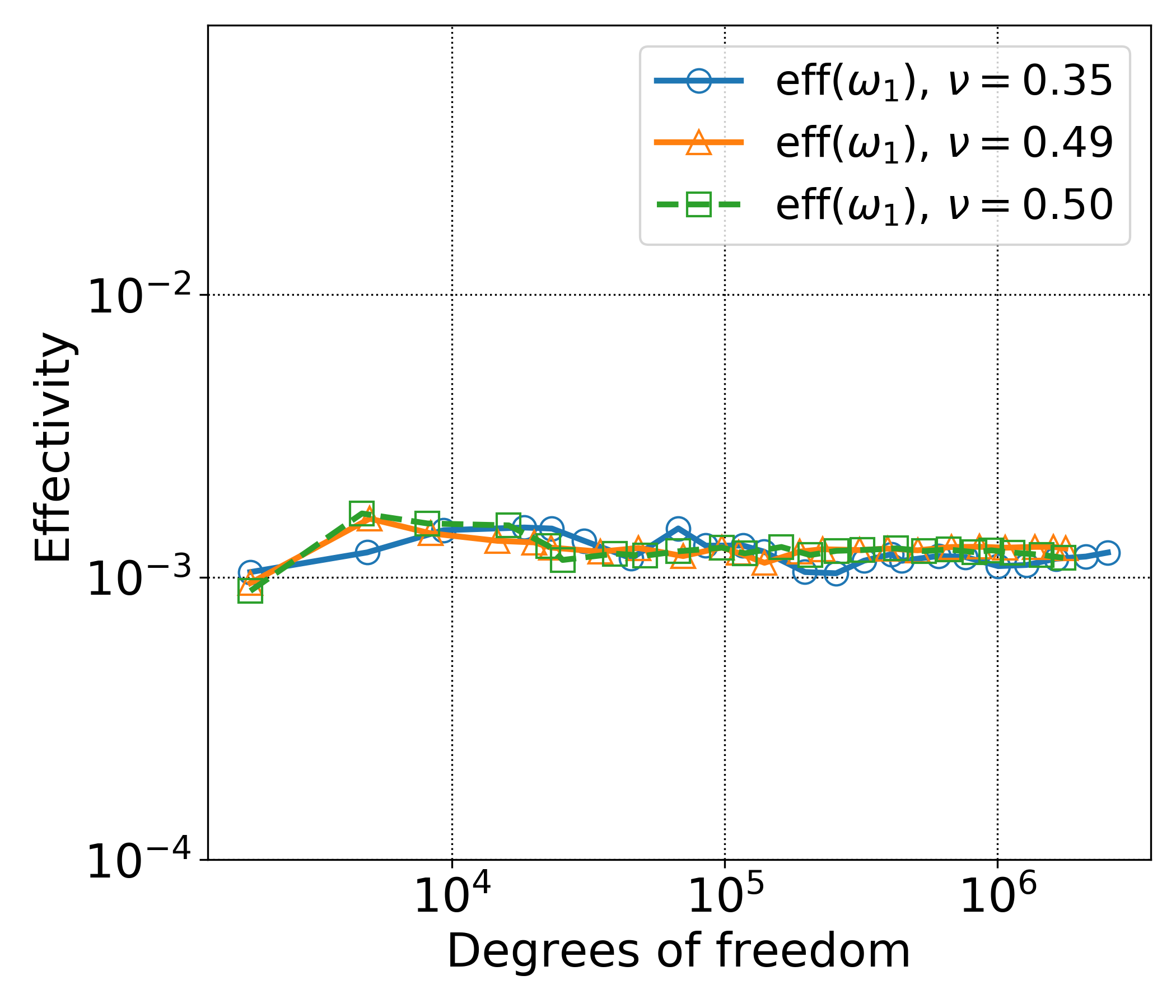}
			\end{minipage}
			\caption{Test 1. Error curves of the different selected values of $\nu$ in the three dimensional L-shaped domain (top),  estimator values curve compared with $\mathcal{O}(N^{-2/3})$ (bottom left), and effectivity of the estimator for different values of $\nu$ (bottom right).}
			\label{fig:lshape-3d-error}
		\end{figure}
		
		
		\begin{figure}
			\centering
			\begin{minipage}{0.49\linewidth}\centering
				\includegraphics[scale=0.10,trim=22cm 4.2cm 21cm 2.4cm,clip]{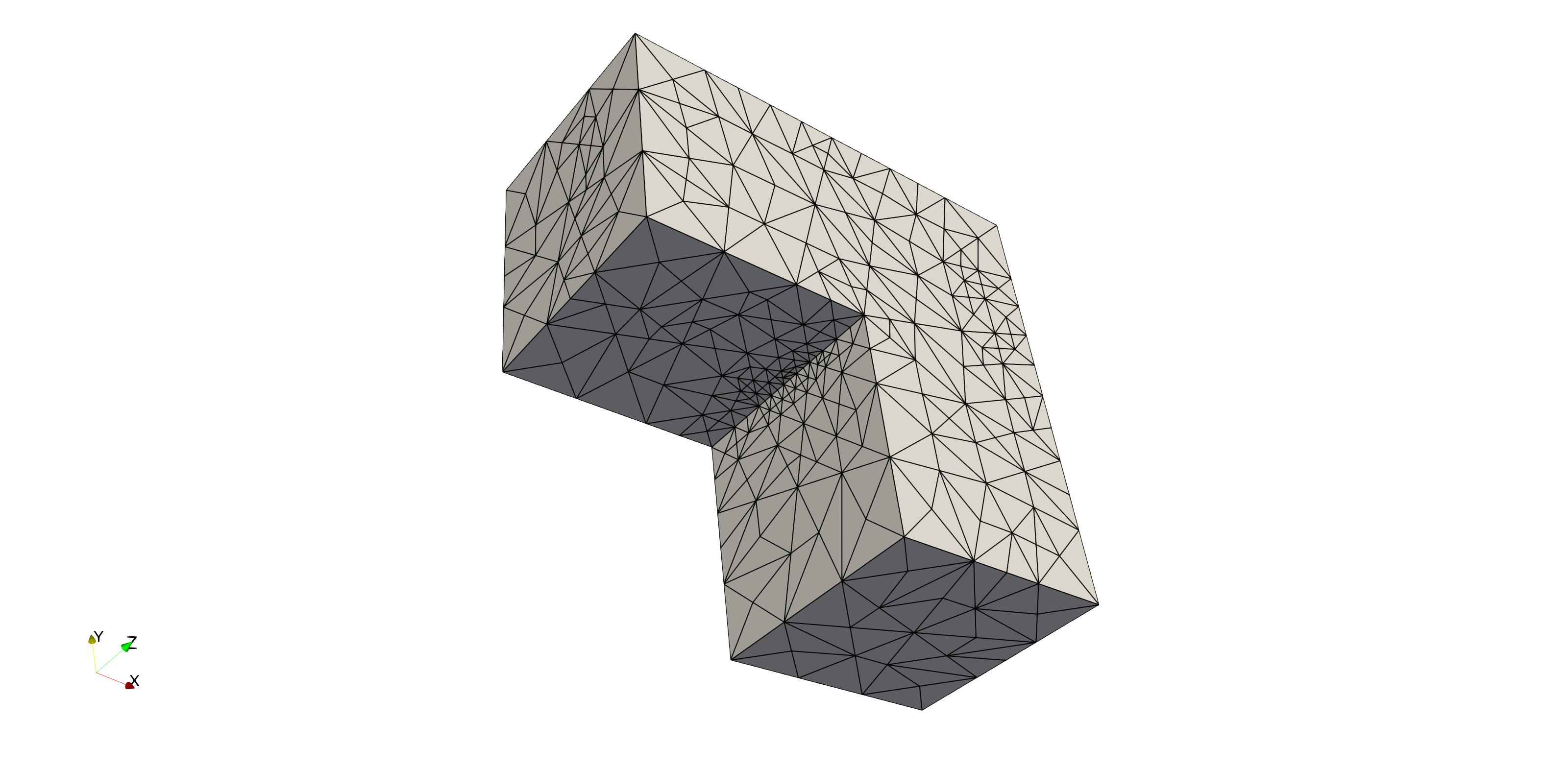}
			\end{minipage}
			\begin{minipage}{0.49\linewidth}\centering
				\includegraphics[scale=0.10,trim=22cm 4.2cm 21cm 2.4cm,clip]{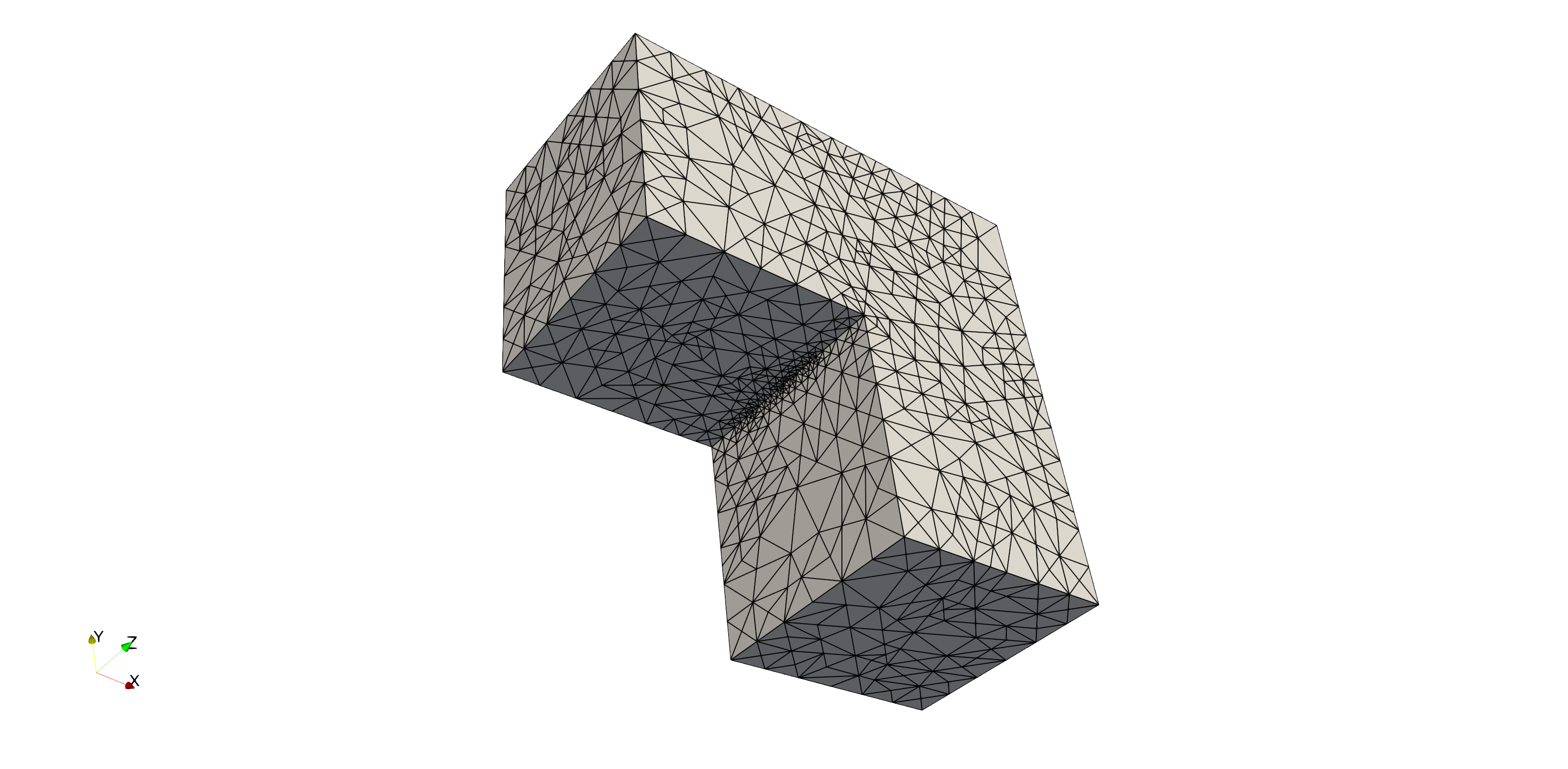}
			\end{minipage}
			\begin{minipage}{0.49\linewidth}\centering
				\includegraphics[scale=0.10,trim=22cm 4.2cm 21cm 2.4cm,clip]{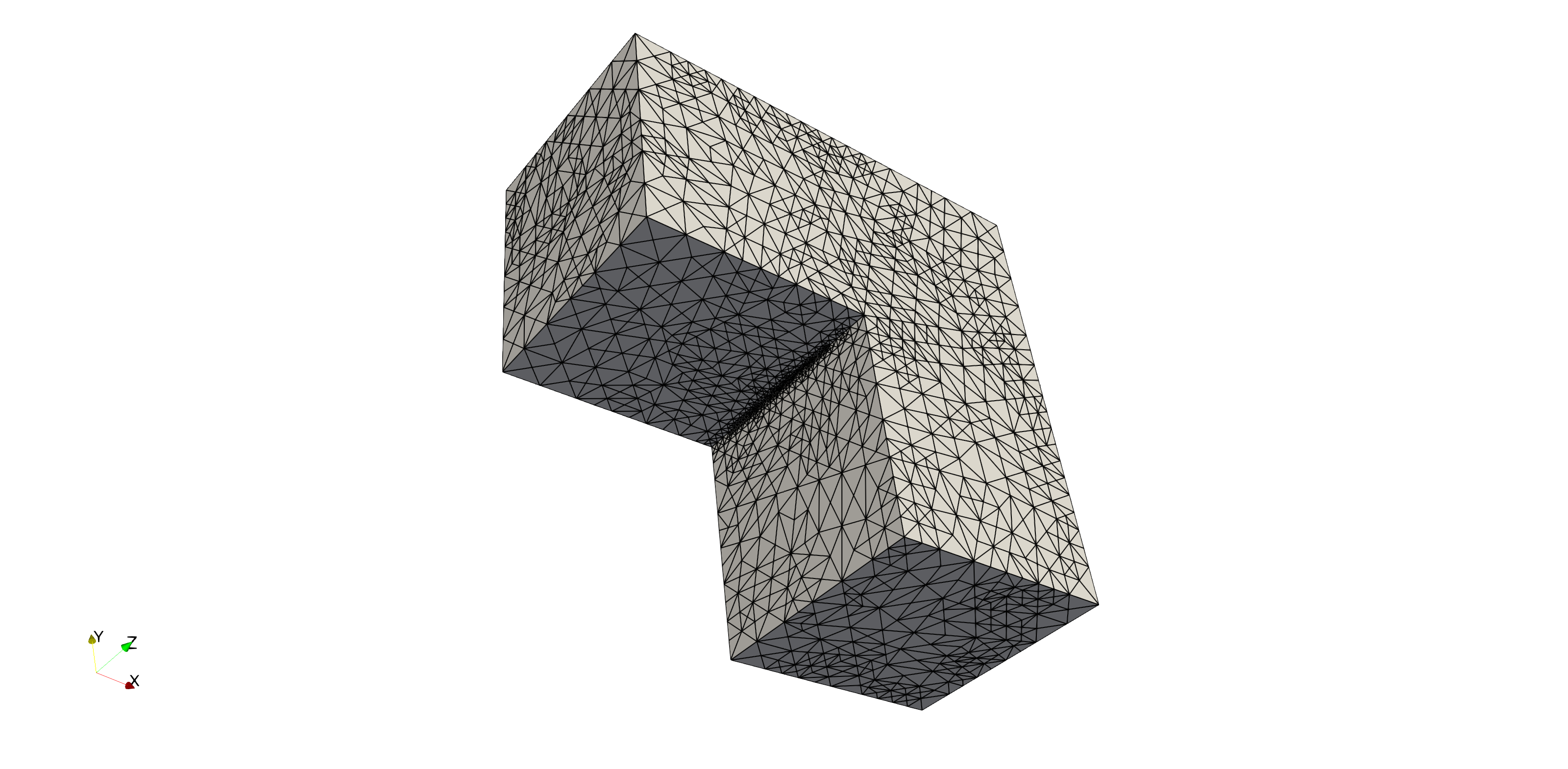}
			\end{minipage}
			\begin{minipage}{0.49\linewidth}\centering
				\includegraphics[scale=0.10,trim=22cm 4.2cm 21cm 2.4cm,clip]{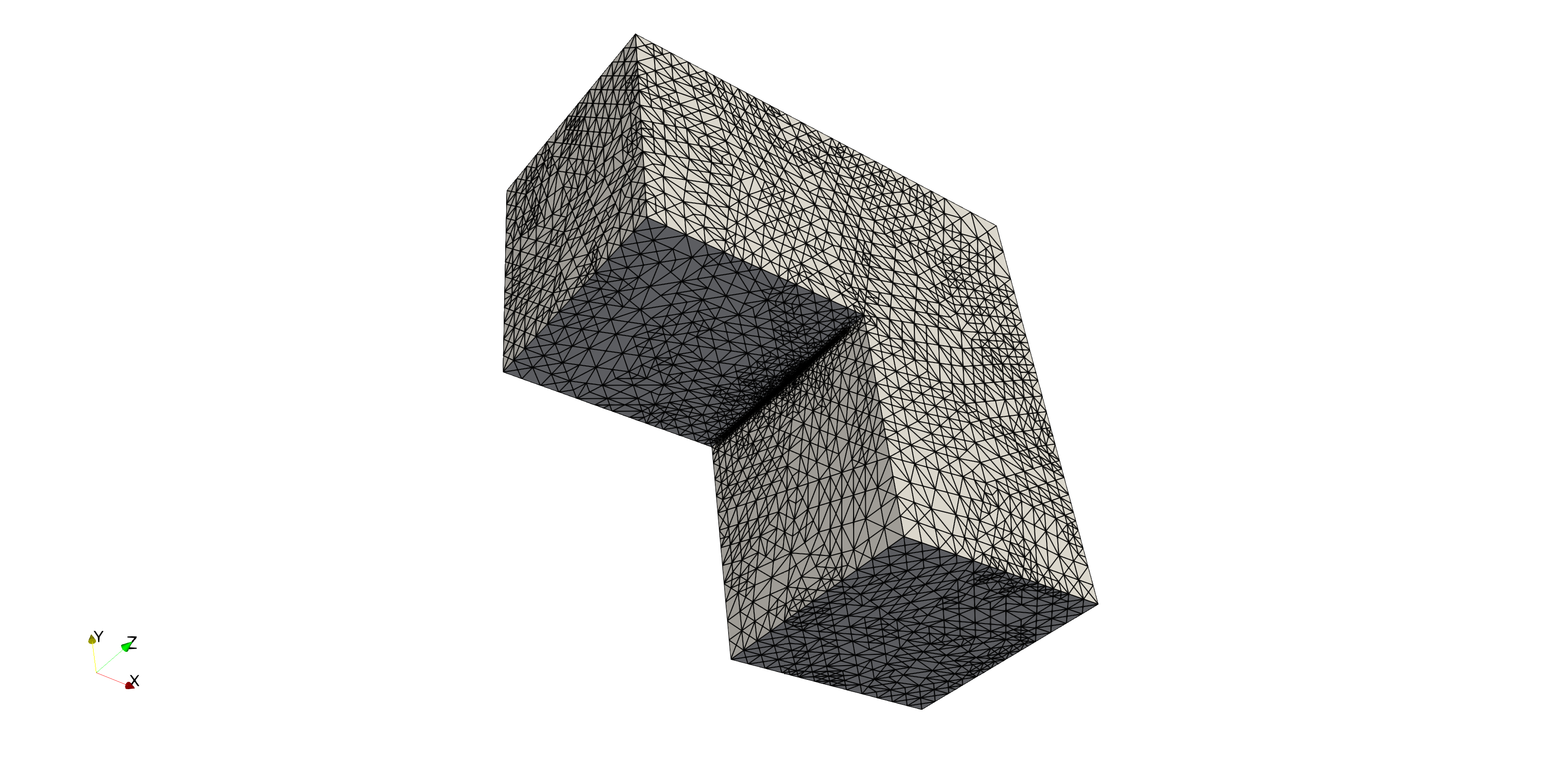}
			\end{minipage}
			\caption{Test 1. Mesh adaptive refinement at the eighth, twelfth, seventeenth, and final step when $\nu=0.35$.}
			\label{fig:l-3d-nu035-mallas}
		\end{figure}
		
		\begin{figure}
			\centering
			\begin{minipage}{0.49\linewidth}\centering
				\includegraphics[scale=0.10,trim=22cm 4.2cm 21cm 2.4cm,clip]{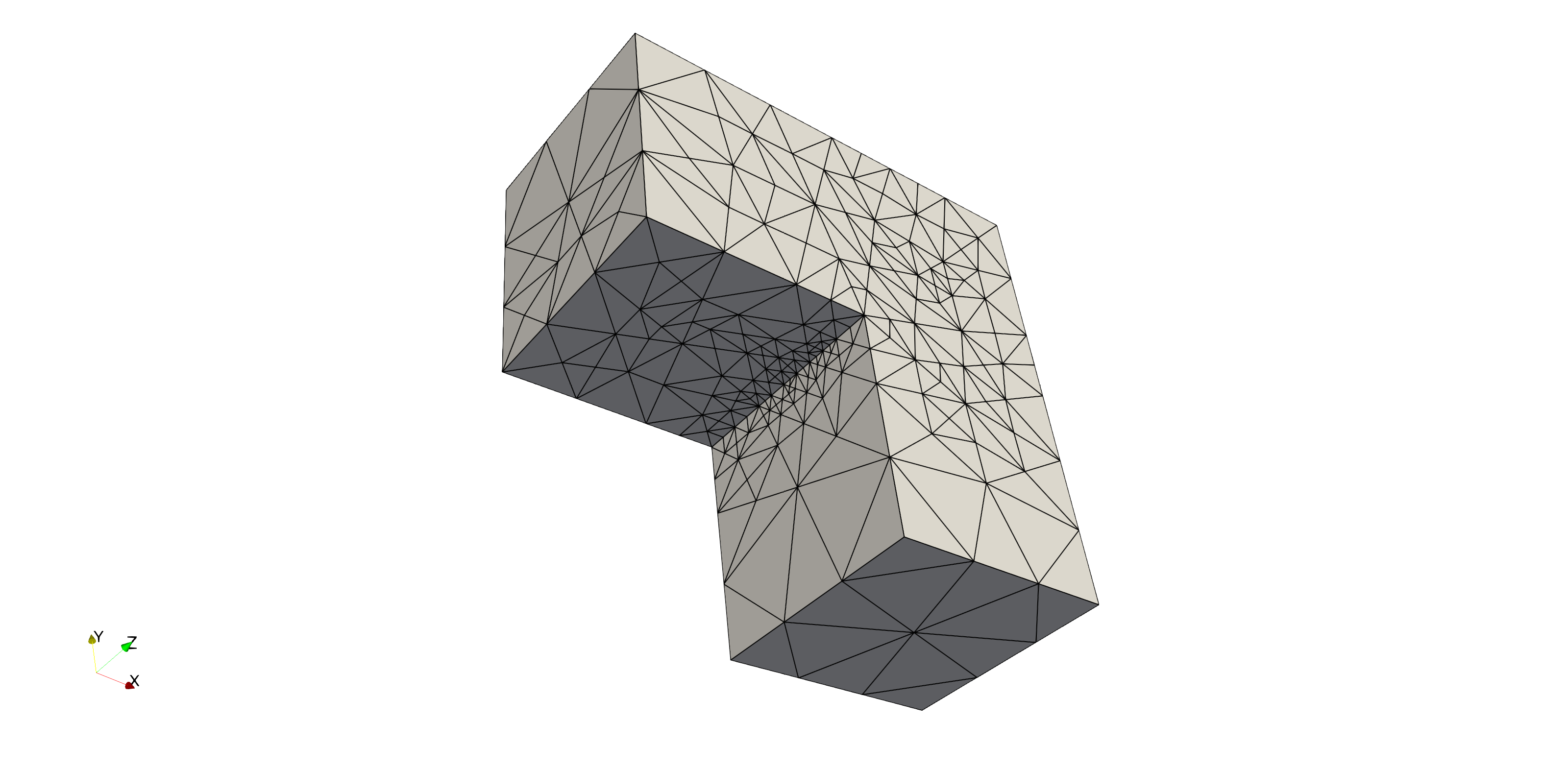}
			\end{minipage}
			\begin{minipage}{0.49\linewidth}\centering
				\includegraphics[scale=0.10,trim=22cm 4.2cm 21cm 2.4cm,clip]{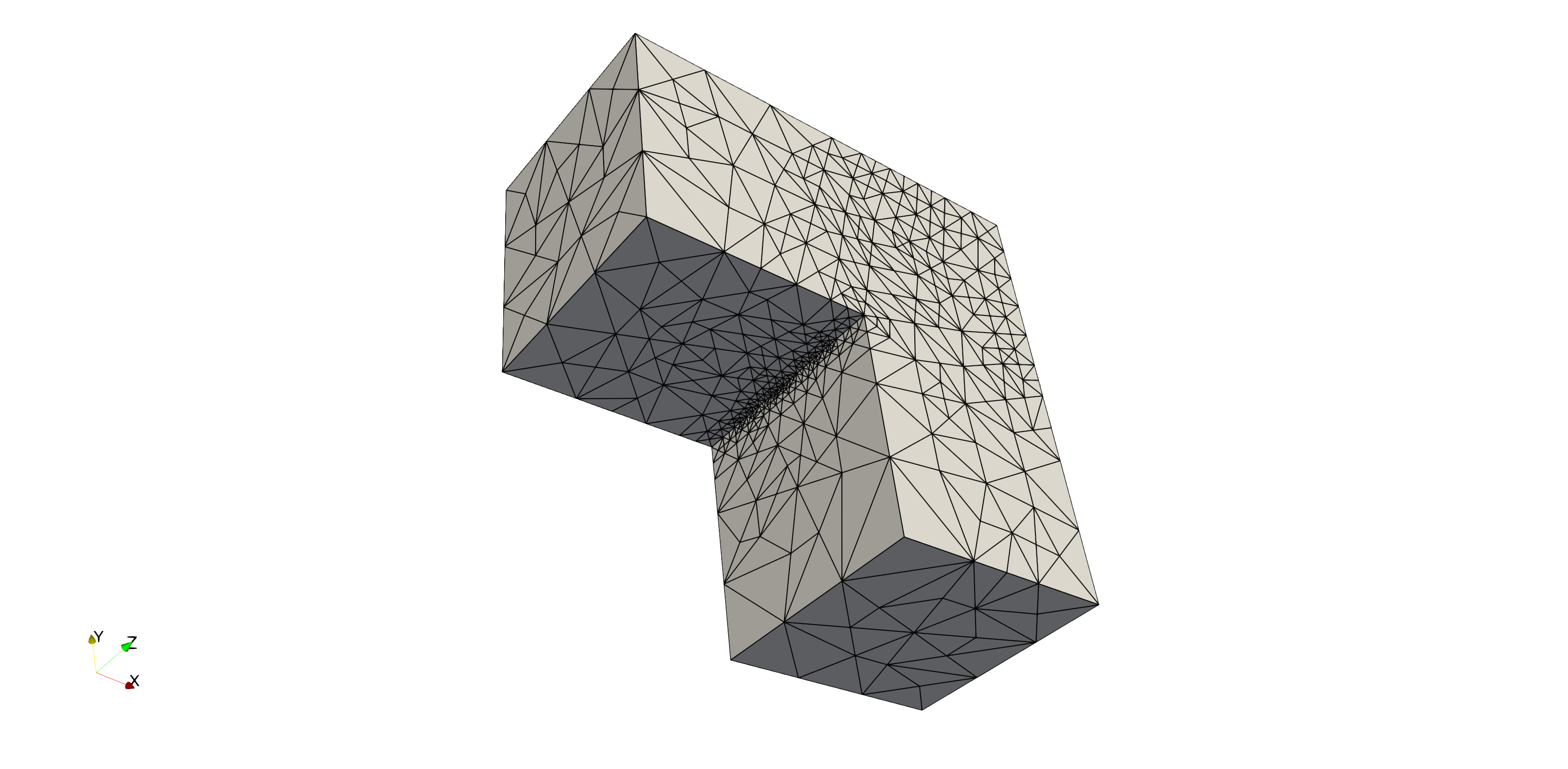}
			\end{minipage}
			\begin{minipage}{0.49\linewidth}\centering
				\includegraphics[scale=0.10,trim=22cm 4.2cm 21cm 2.4cm,clip]{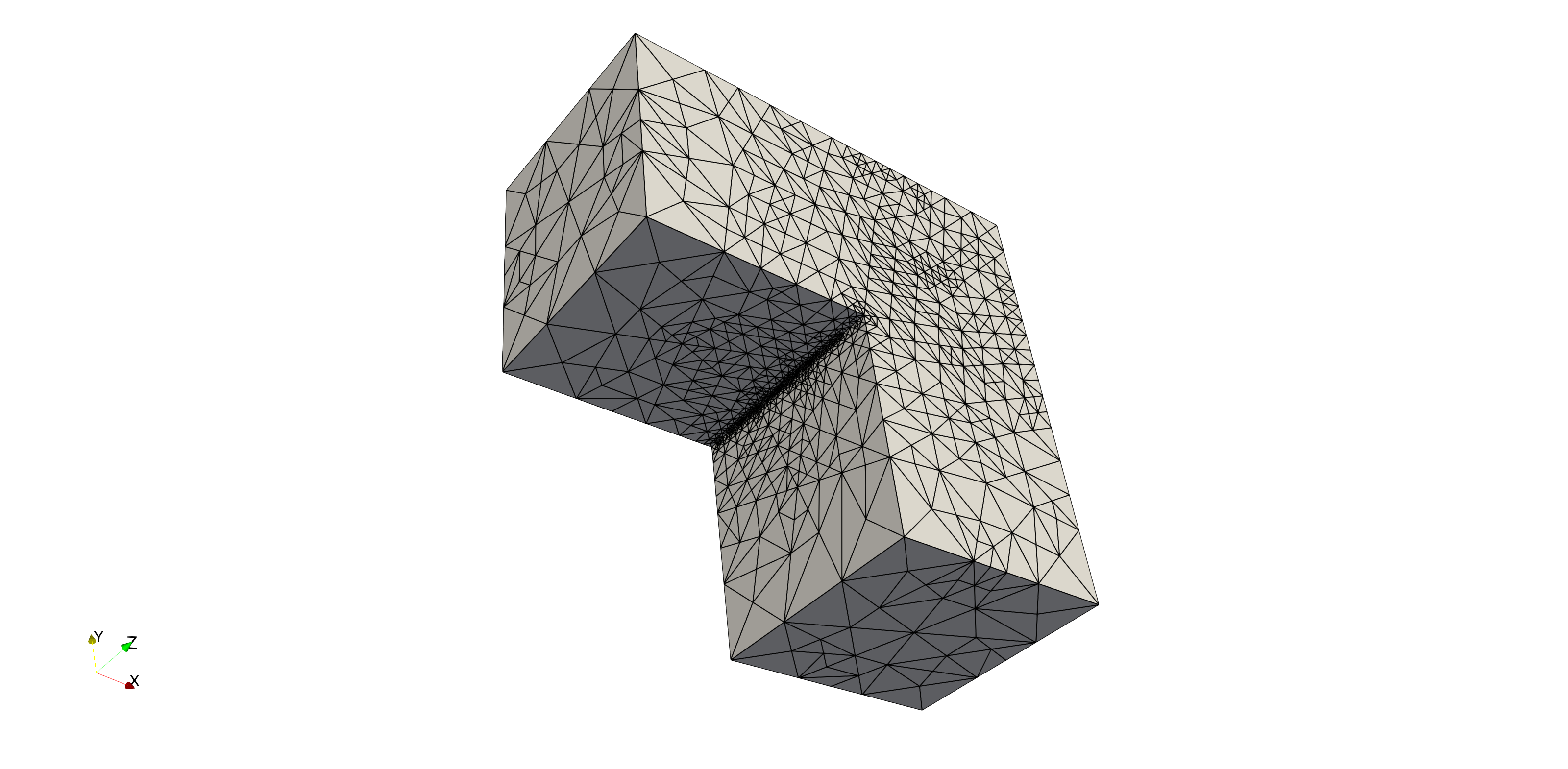}
			\end{minipage}
			\begin{minipage}{0.49\linewidth}\centering
				\includegraphics[scale=0.10,trim=22cm 4.2cm 21cm 2.4cm,clip]{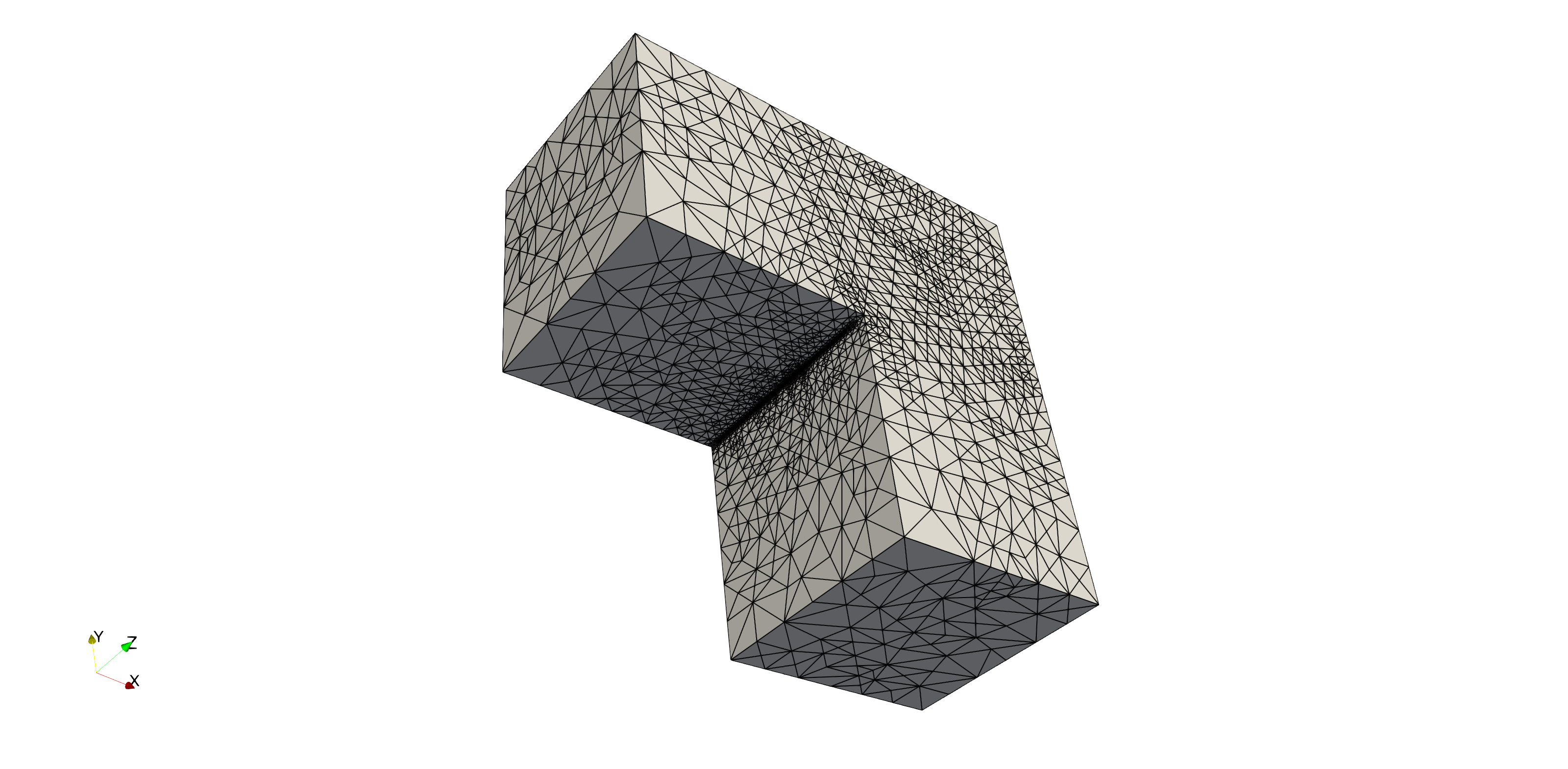}
			\end{minipage}
			\caption{Test 1. Mesh adaptive refinement at the eighth, twelfth, seventeenth, and final step when $\nu=0.5$.}
			\label{fig:l-3d-nu050-mallas}
		\end{figure}
\subsection{Test 2. The L-shaped domain} We now consider the classic L-shape domain occupying the region $\Omega:=(-1,1)\times(-1,1) \backslash \bigg((-1,0)\times (-1,0)\bigg)$, with initial shape given in Figure \ref{fig:lshapeinitialmesh}. The goal of this experiment is to confirm the robustness of our mixed adaptive schemes in lower dimensions and non-convex domains. In this case, we replace tetrahedrons with triangles and edges instead of faces. Hence, in order to define the two-dimensional estimator, we note that in \eqref{eq:local_eta} we have that $\curl$ corresponds to $\rot$, and consider $\btau\boldsymbol{t}$ instead of $\btau\times\bn$, where $\boldsymbol{t}:=(n_2,-n_1)$ is a fixed unit tangent vector to the edge $e$.
	The extrapolated values for this experiment have been obtained through sufficiently fine meshing and least squares fitting. In Table \ref{table: l-2d-tabla-de-extrapolados} we can see the calculated values, which are in good agreement with those obtained in \cite{inzunza2021displacementpseudostress}.
	\begin{table}[H]
		\begin{center}
			\begin{tabular}{c|c}
				$\nu$ & $\omega_1$\\\midrule
				0.35 & 2.37877\\
				0.49 & 3.26873 \\
				0.5 & 3.27271 \\
			\end{tabular}
		\end{center}
		\caption{Test 2. Lowest computed eigenvalues using highly refined meshes and least square fitting in the two dimensional L-shaped domain.}
		\label{table: l-2d-tabla-de-extrapolados}
	\end{table}
	We begin by presenting in Figure \ref{fig:lshape-error} (top) a comparison between the errors calculated using uniform refinements and the adaptive scheme. The fit curves obtained using uniform mesh refinements are $-0.6$ ($\nu=0.35$), $-0.58$ ($\nu=0.49$), and $-0.57$ ($\nu=0.5$). Hence, the computed eigenfrequencies behaves like $\mathcal{O}(h^{1.2})$ ($\nu=0.35$), $\mathcal{O}(h^{1.16})$ ($\nu=0.49$) and $\mathcal{O}(h^{1.14})$ ($\nu=0.5$), being these the best rates to expect using this type of refinement. On the other hand, although the reliability of the estimator is not guaranteed for non-convex geometries, the fit lines in the adaptive refinements show a slope of $-1.0$ ($\nu=0.35$), and $-1.02$ ($\nu=0.49$ and $\nu=0.50$).  This implies that the computed eigenfrequencies satisfies $\vert w_1-w_{1h}\vert=CN^{-1.0}=Ch^{2.0}$ ($\nu=0.35$), and $\vert w_1-w_{1h}\vert=CN^{-1.02}=Ch^{2.04}$, showing that our estimator is able to recover the optimal order of convergence and it is not affected by the singularity in $(0,0)$. Note also that Figure \ref{fig:lshape-error} (bottom left and bottom right) shows that our estimator behaves as $\mathcal{O}(N^{-1})$ in all cases, therefore, as predicted theoretically, the effectivity indexes remain bounded. In Figure  \ref{fig:lshape-mesh-solution_nu050} we depict intermediate meshes when running our adaptive algorithm in the limit case.  We observe that our scheme is not affected and the adaptive scheme is capable of detect and refine near the singularity.

%
	
	\begin{figure}
		\centering
		\includegraphics[scale=0.08,trim= 16cm 0 16cm 0, clip]{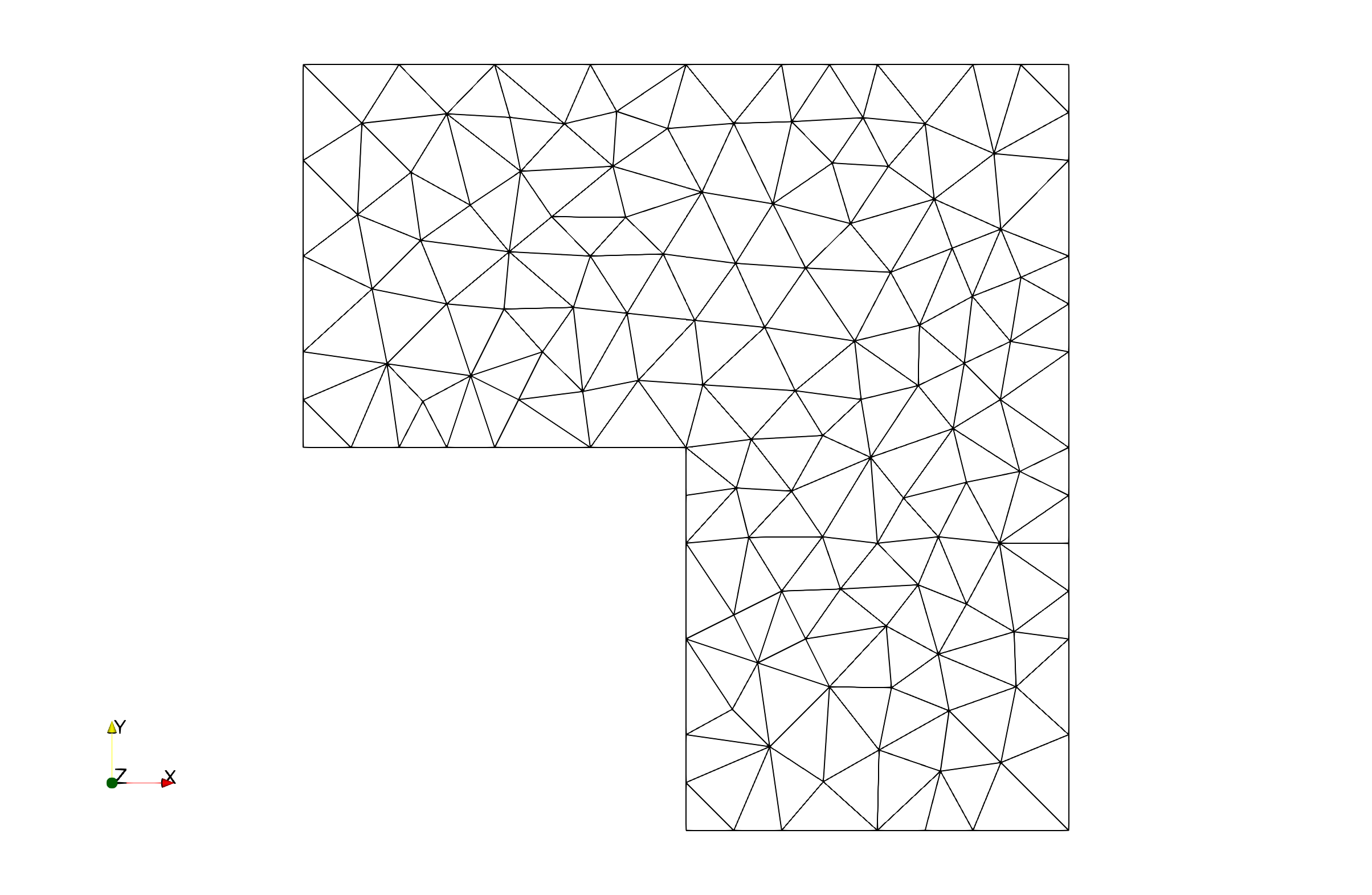}
		\caption{Test 2. Initial mesh on the L-shaped domain.}
		\label{fig:lshapeinitialmesh}
	\end{figure}
	\begin{figure}
		\centering
		\begin{minipage}{\linewidth}\centering
			\includegraphics[scale=0.35]{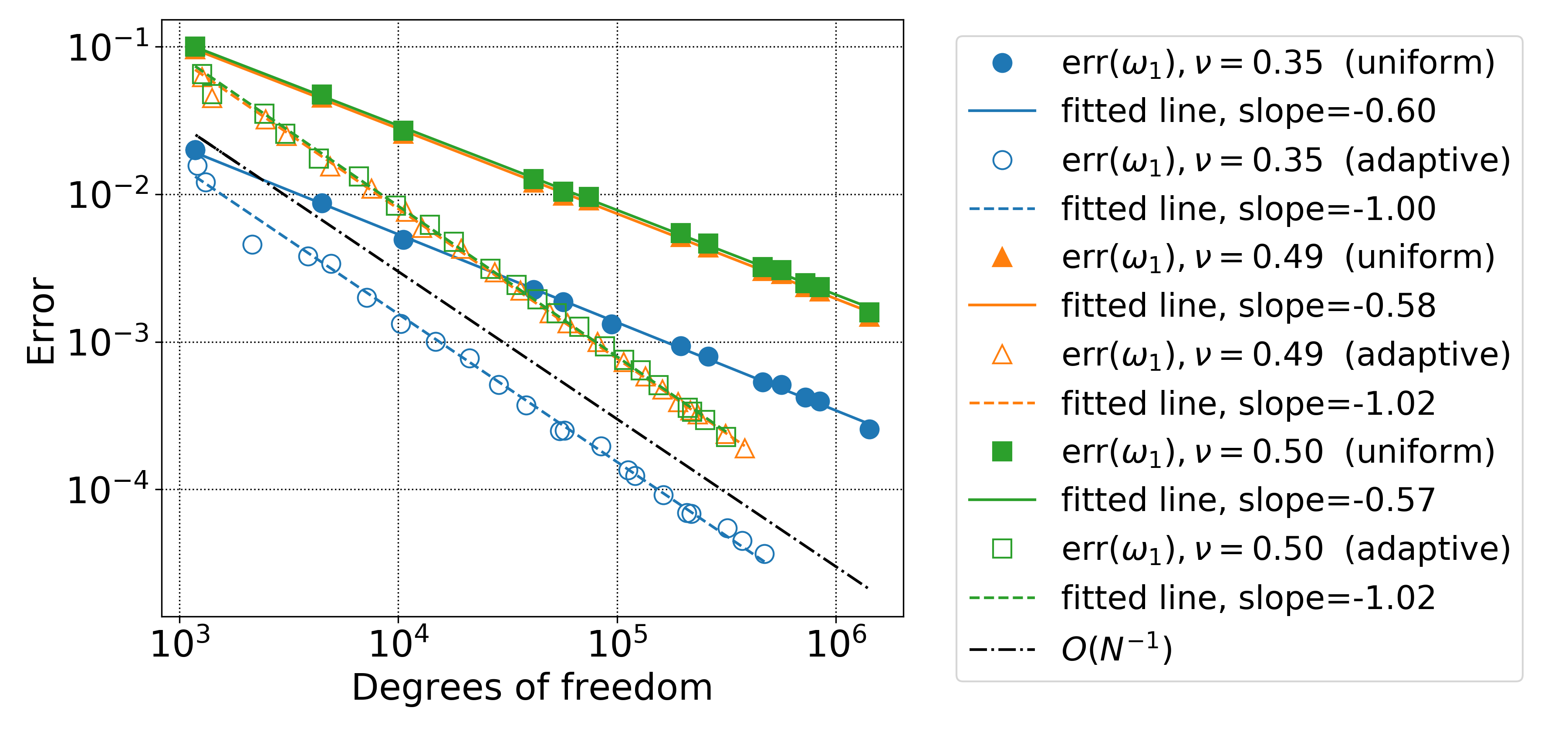}
		\end{minipage}\\
		\begin{minipage}{0.49\linewidth}\centering
			\includegraphics[scale=0.35]{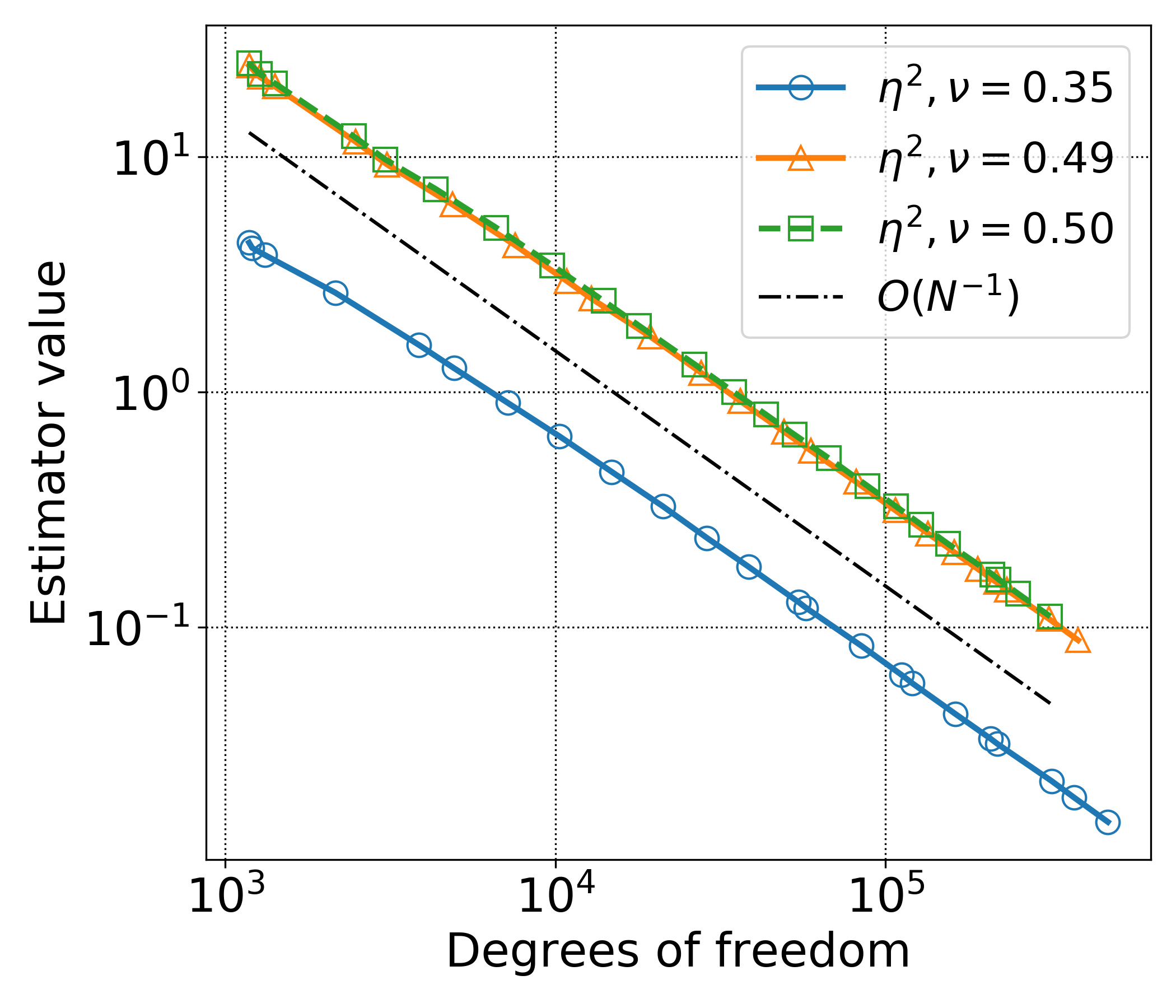}
		\end{minipage}
		\begin{minipage}{0.49\linewidth}\centering
			\includegraphics[scale=0.35]{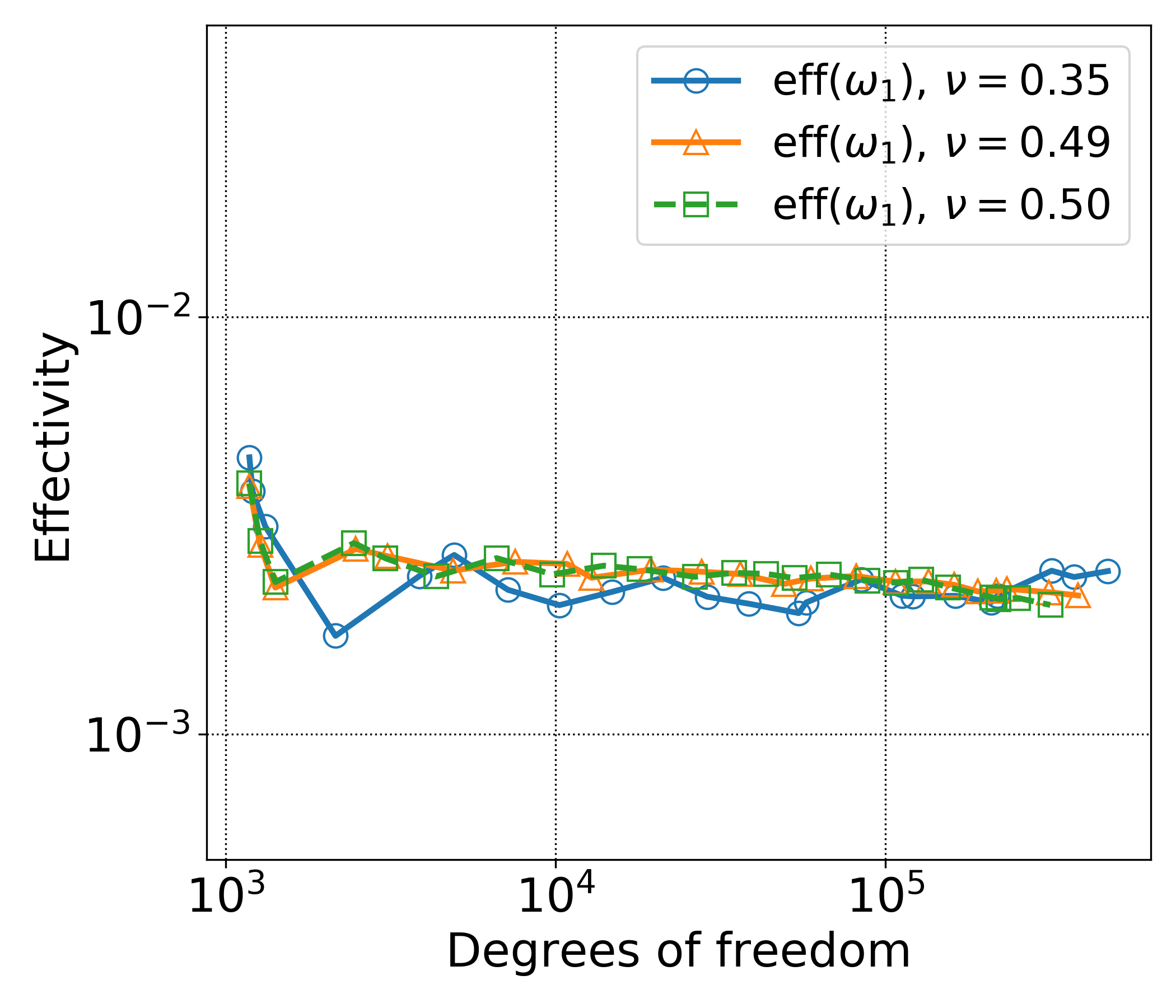}
		\end{minipage}
		\caption{Test 2. Error curves of the different selected values of $\nu$ in the two dimensional L-shaped domain (top),  estimator values curve compared with $\mathcal{O}(N^{-1})$ (bottom left), and effectivity of the estimator for different values of $\nu$ (bottom right). }
		\label{fig:lshape-error}
	\end{figure}
		
		\begin{figure}
			\centering
			\begin{minipage}{0.48\linewidth}
				\includegraphics[scale=0.085,trim= 16cm 2.5cm 16cm 2.5cm, clip]{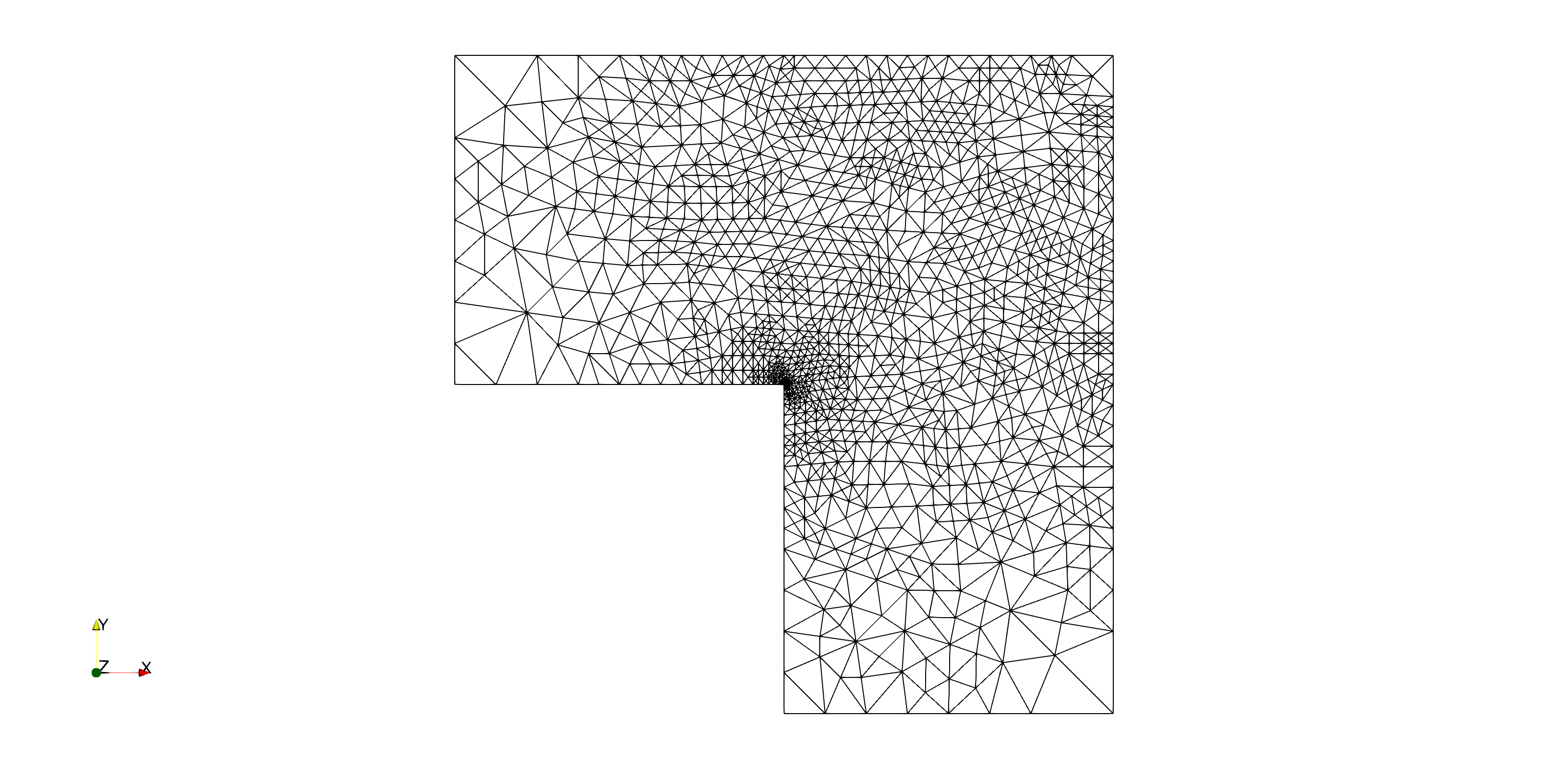}
			\end{minipage}
			\begin{minipage}{0.48\linewidth}
				\includegraphics[scale=0.085,trim= 16cm 2.5cm 16cm 2.5cm,, clip]{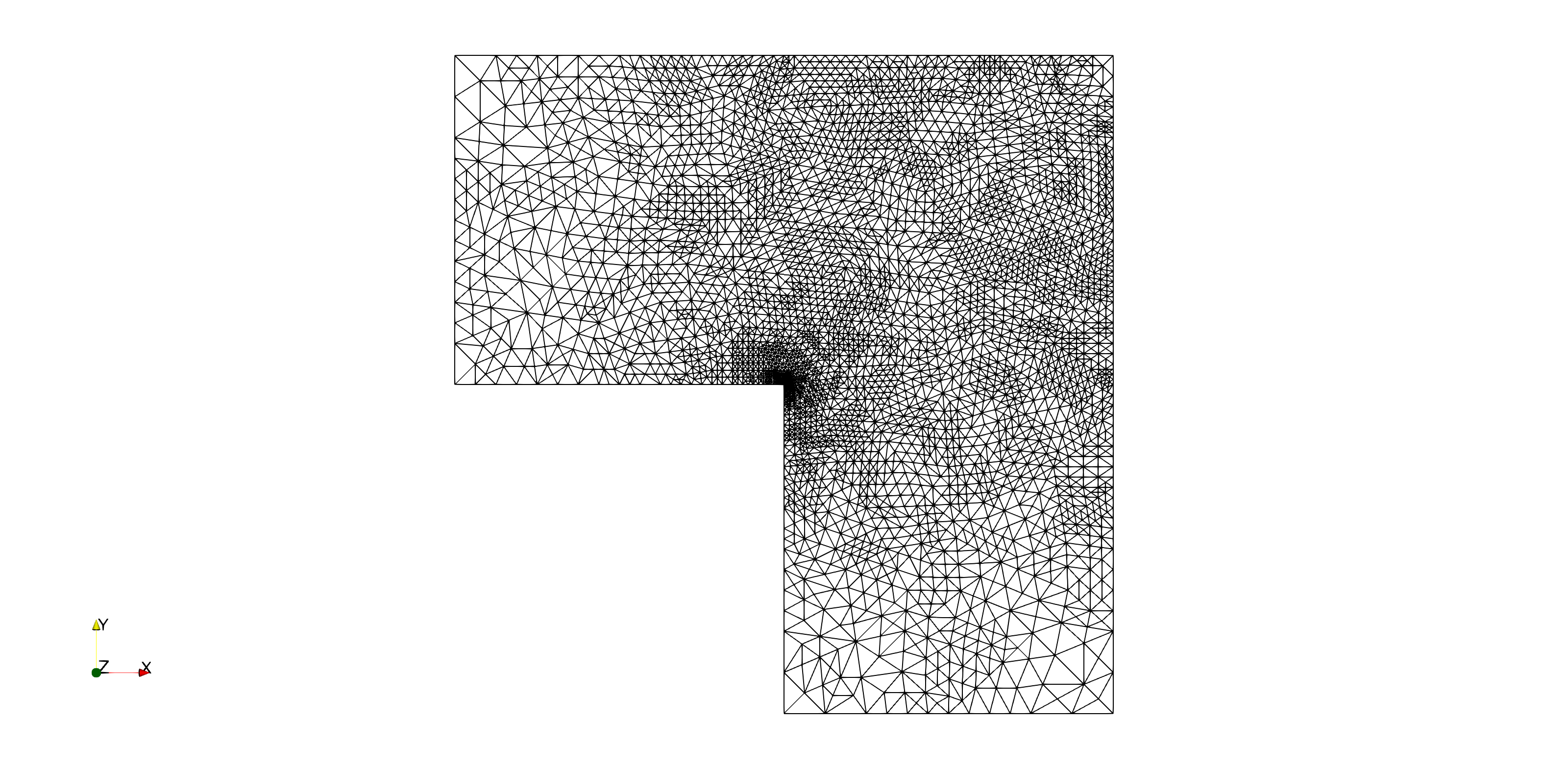}
			\end{minipage}\\
			\begin{minipage}{0.48\linewidth}
				\includegraphics[scale=0.085,trim= 16cm 2.5cm 16cm 2.5cm,, clip]{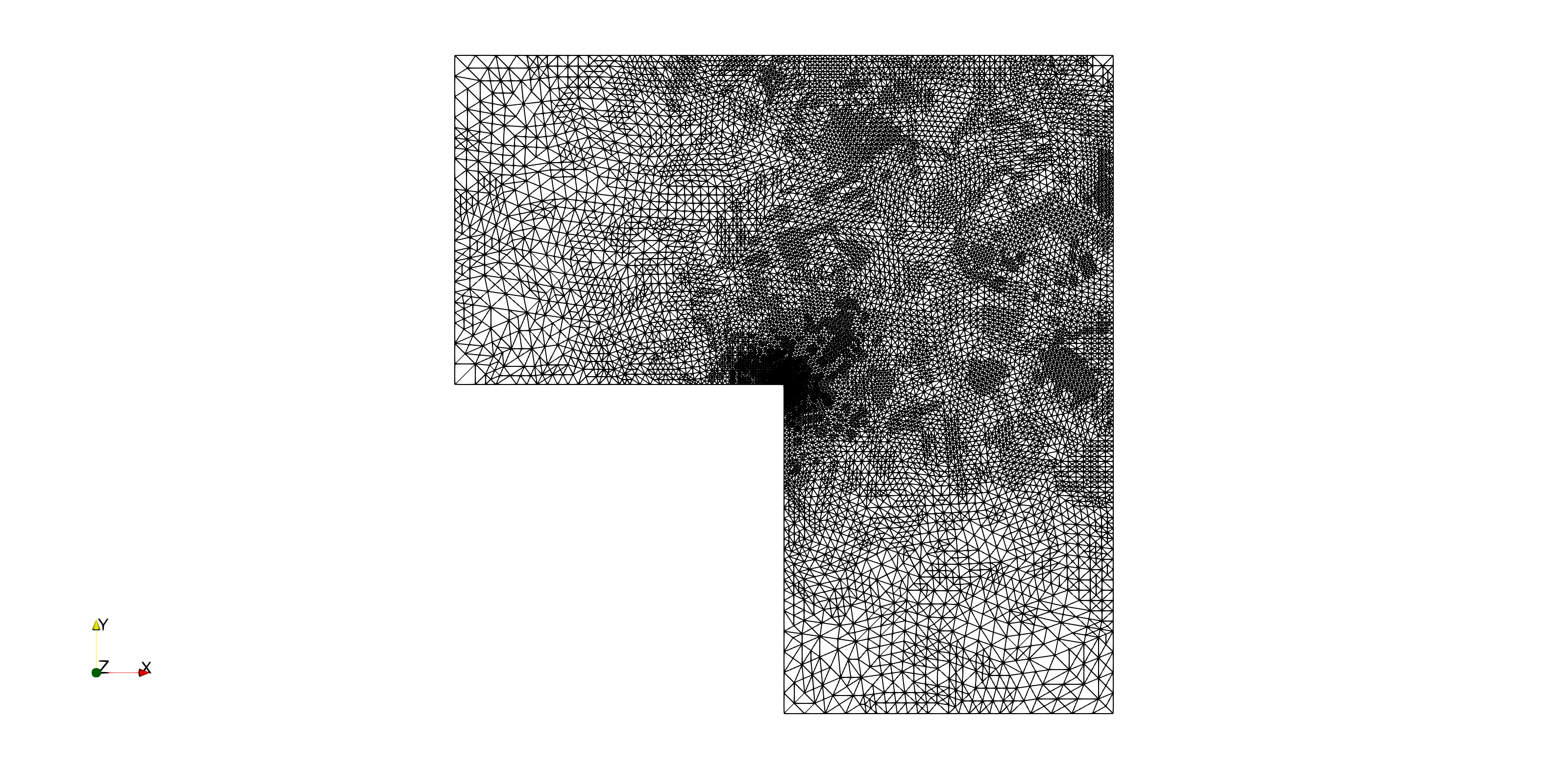}
			\end{minipage}
			\begin{minipage}{0.48\linewidth}
				\includegraphics[scale=0.085,trim= 16cm 2.5cm 16cm 2.5cm,, clip]{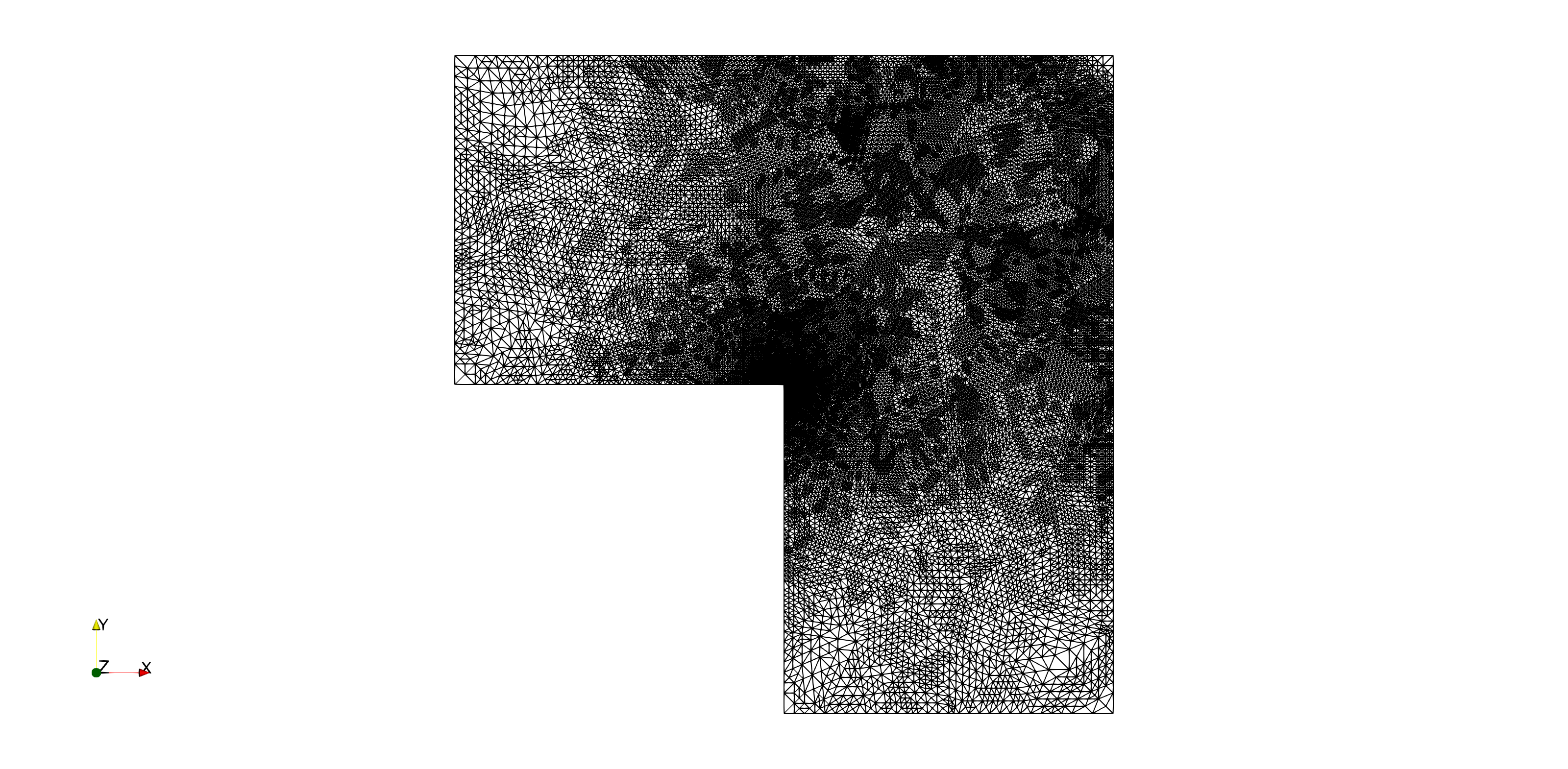}
			\end{minipage}
			\caption{Test 2. Mesh adaptive refinement at the eighth, twelfth, seventeenth, and final step when $\nu=0.5$.}
			\label{fig:lshape-mesh-solution_nu050}
		\end{figure}

\bibliographystyle{siam}
\footnotesize
\bibliography{bib_LOQ}
				
\end{document}